\def\R{\mathbb R}
\def\N{\mathbb N}
\def\C{\mathbb C}
\def\Z{\mathbb Z}
\def\Q{\mathbb Q}
\def\H{\mathbb H}
\def\={\equiv}
\def\<{\langle}
\def\>{\rangle}
\def\eps{\varepsilon}
\def\ov{\overline}
\def\inv{^{-1}}
\def\supp{\operatorname{supp}}
\def\stab{\operatorname{Stab}}
\def\bms{m^{\operatorname{BMS}}}
\def\tbms{\tilde{m}^{\operatorname{BMS}}}
\def\1{\mathbf{1}}
\def\SO{\operatorname{SO}}
\def\norm#1{\left\Vert #1\right\Vert }
\def\opt1{\operatorname{T}^1}
\def\SL{\operatorname{SL}}
\def\t{\textbf{t}}
\def\e{\textbf{e}}
\def\calh{\mathcal{H}}
\def\ps{\mu^{\operatorname{PS}}}
\def\br{m^{\operatorname{BR}}}
\def\tbr{\tilde{m}^{\operatorname{BR}}}
\def\leb{\mu^{\operatorname{Leb}}}
\def\inj{\operatorname{inj}}
\def\Stab{\operatorname{Stab}}
\newtheorem{theorem}{Theorem}[section]
\newtheorem{proposition}[theorem]{Proposition}
\newtheorem{corollary}[theorem]{Corollary}
\newtheorem{lemma}[theorem]{Lemma}
\newtheorem{definition}[theorem]{Definition}
\newtheorem{remark}[theorem]{Remark}
\title[Distribution of orbits]{Distribution of orbits of geometrically finite groups acting on null vectors}
\author[N. Tamam \and J. M. Warren]{Nattalie Tamam \and Jacqueline M. Warren}
\address[N. Tamam \and J. M. Warren]{Department of Mathematics, University of California, San Diego}
\begin{document}
\maketitle

\begin{abstract}
    We study the distribution of non-discrete orbits of geometrically finite groups in $\SO(n,1)$ acting on $\R^{n+1}$, and more generally on the quotient of $\SO(n,1)$ by a horospherical subgroup. Using equidistribution of horospherical flows, we obtain both asymptotics for the distribution of orbits for the action of general geometrically finite groups, and we obtain quantitative statements with additional assumptions.
\end{abstract}

\tableofcontents

\section{Introduction}

We often seek to understand a group through the distribution of its orbits on a given space. In this paper, we will consider the action of certain geometrically finite groups on $\R^{n+1}$ and other spaces. 

When $\Gamma$ is a lattice in $\SL_2(\R)$ acting on $\R^2$, this question was considered by Ledrappier \cite{Ledrappier}, who proved that \[\lim\limits_{T\to\infty} \frac{1}{T}\sum\limits_{\gamma \in \Gamma, \|\gamma\|\le T} f(X\gamma) = c(\Gamma)\int_{\R^2} \frac{f(Y)}{|X||Y|}dY\] for compactly supported functions $f$ and $X \in \R^2$, where $c(\Gamma)$ is some constant depending on the covolume of the lattice $\Gamma$, and $\|\gamma\|$ denotes the $\ell_2$ norm on $\Gamma$. Nogueira \cite{Nogueira} independently obtained this result for $\Gamma=\SL_2(\Z)$ using different methods. More recently, Macourant and Weiss obtained a quantitative version of this theorem for cocompact lattices in $\SL_2(\R)$, and also for $\Gamma=\SL_2(\Z)$ in \cite{MauWeiss}. The case of lattices in $\SL_n(\R)$ acting on different spaces $V$ has also been considered, see for instance \cite{Gorodnik, GorodnikMaucourant}.  

In \cite{Pollicott}, Pollicott proved a similar quantitative theorem for the action of a lattice in $\SL_2(\C)$ on $\C^2$. In the $p$-adic case, Ledrappier and Pollicott \cite{LedrappierPollicott} considered lattices in $\SL_2(\Q_p)$ acting on $\Q_p^2$.

Similar questions have been studied extensively for lattices in a wide variety of groups $G$. For instance, Gorodnik and Weiss consider in \cite{GorodnikWeiss} second countable, locally compact groups $G$ with a general axiomatic approach, with several examples. More recently, Gorodnik and Nevo comprehensively studied the action of a lattice in a connected algebraic Lie group acting on infinite volume homogeneous varieties in \cite{GorodnikNevo}, including obtaining quantitative results under appropriate assumptions. %\tcr{list additional, more general lattice case references}\cite{GorodnikOh}

The case when $\Gamma$ has infinite covolume was recently studied by Maucourant and Schapira in \cite{MauSchap}, where they obtained an asymptotic version of Ledrappier's result for convex cocompact subgroups of $\SL_2(\R)$, with a scaling factor permitted. Moreover, they prove that an ergodic theorem like Ledrappier's in the lattice case cannot be obtained in the infinite volume setting, because there is not even a ratio ergodic theorem. More specifically, \cite[Prop. 1.5]{MauSchap} shows that if $\Gamma \subseteq \SL_2(\R)$ is geometrically finite with $-I$ the unique torsion element, then there exist small bump functions $f$ and $g$ such that for $\ov\nu$-almost every $v$ (where $\ov\nu$ is defined in \S\ref{section: applications}), \[\frac{\sum_{\gamma \in \Gamma_T} f(v\gamma)}{\sum_{\gamma \in \Gamma_T} g(v\gamma)}\] does not have a limit. Thus, it is impossible to obtain an ergodic theorem in this setting with a normalization factor that does not depend on the functions. The key obstruction is the fluctuating behaviour of the Patterson-Sullivan measure. However, they show that with an additional averaging to address these fluctuations, there is a Log-Cesaro convergence, see \cite[Theorem 1.6]{MauSchap}.

Throughout this paper, let $G= \SO(n,1)^\circ$ and let $\Gamma \subseteq G$ be a Zariski dense geometrically finite subgroup. %Assume that $\Gamma$ geometrically finite. %If $\Gamma$ is geometrically finite, we will also assume that the action of the frame flow, $A$, is mixing for the Bowen-Margulis-Sullivan measure on $G/\Gamma$, specifically satisfying the statement of \cite[Assumption 1.1]{BR eqdistr}. For a discussion of when this is known to hold, see \cite[\S 8]{BR eqdistr}.
%In this paper, we study the distribution of $\Gamma$ orbits on $U\backslash G$, where $U$ is the the expanding horospherical subgroup for the frame flow $A$. 
As a consequence of a more general ratio theorem we will discuss later in this section, we will obtain the following asymptotic behaviour for $\Gamma$ orbits acting on $$V=\e_{n+1} G \setminus \{0\},$$which is similar to a result of Maucourant and Schapira for $n=2$. Note that $V$ consists of null vectors of a certain quadratic form and corresponds to the upper half of the ``light cone'' in the usual representation of $\SO(n,1)$; see \S\ref{section: null vectors} for more details. %$V=\R^{n+1}\setminus\{0\}$, which is similar to the result of Maucourant and Schapira for $n=2$: 

When $\Gamma$ is geometrically finite, the limit set of $\Gamma$, denoted $\Lambda(\Gamma) \subseteq \partial(\H^n),$ decomposes into radial and bounded parabolic limit points: $$\Lambda(\Gamma) = \Lambda_r(\Gamma) \sqcup \Lambda_{bp}(\Gamma).$$ For the precise definitions, see \S\ref{section: notation}.

\begin{proposition}\label{prop: vector asymptotic intro}
 Let $\Gamma$ be convex cocompact. For any $\ov\varphi \in C_c(V)$ and every $v \in V$ with $v^- \in \Lambda(\Gamma),$ as $T\to \infty$, we have that \[\frac{1}{T^{\delta_\Gamma/2}} \sum\limits_{\gamma \in \Gamma_T} \ov\varphi(v\gamma) \asymp \int_{V}\ov\varphi(u)\frac{d\ov\nu(u)}{(\norm{v}_2\norm{u}_2)^{\delta_\Gamma/2}},\] where the implied constant depends on $v$ and $\Gamma$. Here, $\delta_\Gamma$ denotes the critical exponent of $\Gamma$, $\|u\|_2$ denotes the Euclidean norm of $u \in \R^{n+1},$ and $\Gamma_T = \{\gamma \in \Gamma : \|\gamma\|\le T\}$, where $\|\gamma\|$ denotes the max norm of $\gamma$ as a matrix in $\SL_{n+1}(\R)$. The notation $v^- \in \Lambda_r(\Gamma)$ is discussed in \S\ref{section: applications}.
\end{proposition}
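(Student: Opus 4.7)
The plan is to reduce the orbital counting problem on $V$ to an equidistribution statement for horospherical orbits on $\Gamma\bs G$, and then apply the Patterson--Sullivan/Burger--Roblin machinery, in the spirit of Ledrappier and Maucourant--Schapira. First, identify $V = \e_{n+1}G \setminus\{0\}$ with $N\bs G$, where $N = \Stab_G(\e_{n+1})$ is a horospherical subgroup, and fix $g_0 \in G$ with $v = \e_{n+1} g_0$, so that $\ov\varphi(v\gamma)$ depends only on the coset $N g_0\gamma$. Writing $g_0\gamma = n(\gamma) a_{t(\gamma)} k(\gamma)$ in the $NAK$-decomposition, the null vector $v\gamma$ depends only on $a_{t(\gamma)} k(\gamma)$, so the constraint $v\gamma \in \supp \ov\varphi$ cuts out a compact range of $(t(\gamma), k(\gamma))$. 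Together with the fact that $\|\gamma\|$ is essentially controlled by $t(\gamma)$ and $|n(\gamma)|$, this half-dimensional constraint is what converts the orbital growth exponent $\delta_\Gamma$ into the exponent $\delta_\Gamma/2$ appearing in the statement.

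Next, thicken the test function using a small bump $\psi$ on $N$: on $\Gamma\bs G$ consider $F(g) = \sum_{\gamma \in \Gamma} \psi(n(g_0\gamma g))\,\ov\varphi(\e_{n+1} g_0\gamma g)$, and unfold to compare $\sum_{\gamma \in \Gamma_T}\ov\varphi(v\gamma)$ with $\int_{\Gamma\bs G} F(g)\,d\br(g)$ restricted to a shell of scale $T$. Partition the shell into slices along the geodesic direction, each of which is a horospherical piece translated by a single element $a_t$. Applying the equidistribution of horospherical flows on $\Gamma\bs G$ with respect to the Bowen--Margulis--Sullivan measure (Roblin, Schapira; quantitative refinements by Mohammadi--Oh and others) yields, for each slice, a main term comparable to a BR paired integral multiplied by a factor involving the PS shadow mass at $v^-$. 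The hypothesis $v^- \in \Lambda(\Gamma)$ together with convex cocompactness bounds this shadow mass above and below by constants depending only on $v$ and $\Gamma$, producing the two-sided $\asymp$. Summing the geometric series over slices produces the $T^{\delta_\Gamma/2}$ scaling, and unfolding the Burger--Roblin measure on $N\bs G$ to its expression on $V$ gives the claimed integrand $\ov\varphi(u)(\|v\|_2\|u\|_2)^{-\delta_\Gamma/2}\,d\ov\nu(u)$, with $\|u\|_2^{-\delta_\Gamma/2}$ coming from the conformal density of $\ov\nu$ on $V$ and $\|v\|_2^{-\delta_\Gamma/2}$ absorbing the local PS normalization at $v^-$.

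The principal technical obstacle is the dictionary between the Euclidean matrix norm $\|\cdot\|$ on $\SL_{n+1}(\R)$ and the intrinsic geometric quantities --- hyperbolic displacement, Busemann functions, and $A$-components of Cartan/Iwasawa decompositions --- in which the horospherical equidistribution theorems are naturally phrased; one must check that the matrix-norm ball $\{\|g\|\le T\}$ matches a geodesic-distance ball of radius $c\log T$ up to controlled errors, and that the slicing in $t$ is compatible with the $NAK$-decomposition used to identify $V$ with $N\bs G$. Convex cocompactness is used both to guarantee uniform control of PS shadow masses at all radial limit points and to allow absorbing bounded multiplicative losses at each step; this is why the conclusion is only a two-sided $\asymp$ with implied constant depending on $v$, rather than an asymptotic with an explicit limiting constant, which would require the finer ratio theorem the authors announce later in the introduction.
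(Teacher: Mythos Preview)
Your proposal assembles the correct ingredients---identify $V$ with a quotient by a horospherical stabilizer, thicken by a bump $\psi$ on $U$, apply horospherical equidistribution (Mohammadi--Oh) with respect to the Burger--Roblin measure, and use the shadow lemma for convex cocompact $\Gamma$---and this is indeed the skeleton of the paper's argument. However, the mechanism you describe for producing the exponent $\delta_\Gamma/2$ is not the one the paper uses, and your version is somewhat muddled.

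You propose to ``partition the shell into slices along the geodesic direction'' and ``sum the geometric series over slices''. But there is no slicing in $A$: the condition $v\gamma \in \supp\ov\varphi$ already pins the $AK$-part of $g_0\gamma$ (in the Iwasawa decomposition $u_\t a k$) to a compact set, so the only unbounded variable is the $U$-parameter $\t$. The paper's key observation (Lemma~\ref{lem: bound on ug}) is the \emph{quadratic} relation
\[
\|\gamma\| \;=\; \|\t\|^2 (x\star x\gamma)^2 \;+\; O(\|\t\|),
\]
coming directly from the $(1,n{+}1)$-entry $\tfrac12\|\t\|^2$ in the parametrization of $u_\t$. Thus $\|\gamma\|\le T$ becomes $\|\t\|\lesssim \sqrt{T}/(x\star x\gamma)$, and the orbital sum is sandwiched between two \emph{single} horospherical integrals over $B_U(\sqrt{T}/c)$ (Proposition~\ref{prop: setting up for equidistribution}). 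Equidistribution then yields $\ps_{\Psi(x)\Gamma}(B_U(\sqrt{T}/c))\cdot \br(F)$, and the shadow lemma gives $\ps(B_U(\sqrt{T}/c))\asymp (\sqrt{T})^{\delta_\Gamma}=T^{\delta_\Gamma/2}$. No geometric series appears.

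Relatedly, your ``principal technical obstacle''---comparing the matrix-norm ball $\{\|g\|\le T\}$ to a geodesic-distance ball of radius $c\log T$---is not part of the argument and would point toward a Cartan-decomposition counting scheme, which is the wrong duality here. The correct dictionary is purely Iwasawa: matrix norm $\leftrightarrow$ square of the horospherical displacement, with the $\star$ operation (equation~\eqref{eq: star}) absorbing the dependence on the constrained $AK$-part. Finally, note $\Stab_G(\e_{n+1})=UM$, not $U$; the paper first proves everything on $U\backslash G$ (Corollary~\ref{cor: asymptotic}) and then pushes forward to $UM\backslash G\cong V$ in \S\ref{section: applications}, where the computation $v\star u\asymp \sqrt{\|v\|_2\|u\|_2}$ converts $(x\star\pi_U(p))^{-\delta_\Gamma}$ into $(\|v\|_2\|u\|_2)^{-\delta_\Gamma/2}$.
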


Here, the notation $a \asymp b$ means that there exists a constant $\lambda>1$ such that $$\lambda\inv \le \frac{a}{b}\le \lambda.$$ 

%The notation $v^-\in\Lambda(\Gamma)$ and 
The precise definition of the measure $\ov\nu$ is discussed in \S\ref{section: applications}. It is the pushforward of the measure $\nu$ defined in \S\ref{sec: BR defn}, which is part of the product structure of the \emph{Burger-Roblin (BR)} measure, defined fully in that section.

Let $U=\{u_\t : \t \in \R^{n-1}\}$ be the expanding horospherical subgroup for the frame flow $A$. Let $P\subset G$ be the parabolic subgroup which contains the contracting horospherical subgroup. Parametrizations of these groups are given in \S\ref{section: notation}. 

Proposition \ref{prop: vector asymptotic intro} is obtained by counting orbit points in $U\backslash G$. We will also establish a stronger version, specifically showing that a more precise ratio tends to 1. With additional assumptions on $\Gamma$, we obtain a quantitative version of this statement. We need to define additional notation in order to state this result.

Let $UAK$ be the Iwasawa decomposition of $\SL_{n+1}(\R)$, and let $\Psi: U \backslash G \to G$ be the map \[\Psi(Ug) = ak,\] where $g=uak$ in the Iwasawa decomposition.

%Let $\pi_U:G\to U\backslash G$ be the quotient map. 

We view $G$ as embedded in $\SL_{n+1}(\R)$. For $g \in G$, let $\|g\|$ denote the max norm as a matrix in $\SL_{n+1}(\R)$. The following ``product'' is useful for our statements (a similar definition exists in the $\SL_2(\R)$ case). For $x,y\in U\backslash G$, let 
\begin{equation}\label{eq: star}
x\star y:=\sqrt{\frac{1}{2}\norm{\Psi(x)\inv E_{1,n+1} \Psi(y)}},    
\end{equation}
where $E_{1,n+1}$ is the $(n+1)\times(n+1)$ matrix with one in the $(1,n+1)$-entry and zeros everywhere else. For $x \in U\backslash G$ and $g \in G$, $x \star xg$ measures the difference between the $U$ components of the Iwasawa decomposition of $x$ and $xg$. More specifically, it measures the $(1,n+1)$ component of $g$. 

For $L \subseteq G$, define $$L_T := \{g \in L : \norm{g}\le T\}$$ %When $h=e$, the identity in $G$, we simply write $L_T$.
and $$B_U(T):=\{u_\t \in U : \|\t\|\le T\},$$ where $\|\t\|$ denotes the max norm of $\t \in \R^{n-1}.$ Let $\pi_U : G \to U\backslash G$ denote the natural projection map.

We will be interested in the following quantity:
\begin{equation}\label{eq: defn of I varphi T x}
    I(\varphi,T,x):=\int_{P}\ps_{\Psi(x)\Gamma}\left(B_U\left(\frac{\sqrt{T}}{x\star\pi_U(p)}\right)\right){\varphi(\pi_U(p))}d\nu(p).
\end{equation} Here, $\varphi$ is a function on $U\backslash G$, $x\in U\backslash G$, $T > 0$, $\ps$ denotes the PS measure, fully defined in \S\ref{section: PS}, and $\nu$ is defined in \S\ref{sec: BR defn}.

For two functions of $T$, $a(T),b(T)$, we write \[a(T) \sim b(T) \iff \lim\limits_{T\to\infty}\frac{a(T)}{b(T)}=1.\] %if $\lim\limits_{T\to\infty} a(T)/b(T)=1.$

We can now state a qualitative version of our ratio theorem:

\begin{theorem}\label{thm: non effective ratio}
    Let $\Gamma$ be geometrically finite. For any $\varphi\in C_c(U\backslash G)$ and every $x \in U\backslash G$ such that $\Psi(x)^- \in \Lambda_r(\Gamma),$ \[\sum\limits_{\gamma \in \Gamma_T}\varphi(x\gamma)\sim I(\varphi,T,x).\]
    The notation $g^-$ for $g \in G$ is defined in \S\ref{section: notation}. 
\end{theorem}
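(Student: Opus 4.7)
The plan is to convert the counting sum on the left-hand side into an integral over $P$ via the decomposition $G = U \cdot P$, and then to evaluate the resulting per-slice counts by the equidistribution of the horospherical orbit of $\Psi(x)$ in $\Gamma\backslash G$ with respect to the Patterson--Sullivan-weighted Burger--Roblin measure.

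The first step is a matrix computation that identifies the counting region in $UP$-coordinates. For $x = Ug$ and $\gamma \in \Gamma$ such that $x\gamma$ lies in the $U$-orbit of some $p \in P$, write $x\gamma = u \cdot p$ with $u \in U$; then comparing the $(1,n+1)$-entry of $\Psi(x)\inv \Psi(x\gamma)$ with the definition of $\star$ in \eqref{eq: star} shows that the condition $\norm{\gamma}\le T$ is equivalent, up to bounded multiplicative constants, to $u \in B_U(\sqrt{T}/(x\star\pi_U(p)))$. This is the origin of both the $\sqrt{T}$ scale and the normalization $x\star\pi_U(p)$ appearing in the definition of $I(\varphi,T,x)$.

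The second step partitions $G = U\cdot P$ into thin transverse slabs $U\cdot W_i$ with $W_i \subset P$ small, on each of which $\varphi\circ\pi_U$ is approximately constant, and counts the $\gamma \in \Gamma_T$ contributing to each slab. Each slab count is precisely what is governed by the equidistribution of the orbit $\Psi(x)U$ on $\Gamma\backslash G$: the asymptotic number of $\gamma \in \Gamma$ with $x\gamma \in U\cdot W_i$ and with the $U$-factor inside $B_U(R)$ is, as $R\to\infty$, equivalent to $\nu(W_i)\cdot \ps_{\Psi(x)\Gamma}(B_U(R))$ by the horospherical equidistribution theorem for geometrically finite groups (in the spirit of Roblin, Schapira, Mohammadi--Oh, and Oh--Shah). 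The hypothesis $\Psi(x)^- \in \Lambda_r(\Gamma)$ is exactly what enables this theorem: it guarantees that the PS measure on $\Psi(x)U$ is nontrivial and that the $U$-orbit returns to the thick part of $\Gamma\backslash G$ at a positive density of scales. Assembling the slab asymptotics with $R = \sqrt{T}/(x\star\pi_U(p_i))$, refining the partition, and using the local product formula for the BR measure from Section~\ref{sec: BR defn} produces $I(\varphi,T,x)$.

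The main obstacle will be uniformity: to conclude the ratio tends to $1$ rather than merely being bounded, the equidistribution has to be applied at scales $R_p = \sqrt{T}/(x\star\pi_U(p))$ that depend on the slab and can be small or large depending on $p$ in the support of $\varphi$. When $\Gamma$ is convex cocompact the PS measure is uniformly doubling on its support and a standard smooth-bump approximation of $\mathbf{1}_{B_U(R)}$ suffices. For general geometrically finite $\Gamma$, however, the PS measure fails uniform doubling near parabolic fixed points, so the proof must combine the compact support of $\varphi$ (keeping $p$ in a bounded region of $U\backslash G$), the radial condition on $\Psi(x)^-$ (to control returns of $xU$ to the cusps), and a careful cusp analysis to transfer equidistribution of smooth test functions to the indicator $\mathbf{1}_{\Gamma_T}$ uniformly across the partition.
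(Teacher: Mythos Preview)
Your outline is essentially the paper's argument: the matrix computation is Lemma~\ref{lem: bound on ug}, the duality/thickening is Proposition~\ref{prop: setting up for equidistribution} (the paper builds the auxiliary function $F(g\Gamma)=\sum_\gamma \psi(u(g\gamma))\varphi(\pi_U(g\gamma))$ rather than slabbing $P$ directly, but this is the same mechanism), the partition into small pieces is Corollary~\ref{cor: partition of the function}, and the equidistribution input is Theorem~\ref{thm: non effective eqdstr}.

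One correction and one clarification on your last paragraph. First, the PS measure \emph{is} doubling for geometrically finite $\Gamma$ (Lemma~\ref{lem: PS is doubling}); doubling is not the obstruction. The actual issue in passing from smooth test functions to the indicator $\mathbf{1}_{\Gamma_T}$ is controlling the boundary annulus $B_U((1+\eta)R)\setminus B_U(R)$ in PS measure, uniformly over all slab radii $R_p$. Doubling alone gives only $\ps(B_U((1+\eta)R))\ll \ps(B_U(R))$, which is not enough; you need the difference to be $o(\ps(B_U(R)))$ as $\eta\to 0$. The paper handles this via the \emph{friendliness} of the PS density (Lemma~\ref{lem: new planarity density} through Lemma~\ref{lem: freindly used}), which yields a quantitative boundary decay $\ps(B_U((1+\eta)R))-\ps(B_U(R))\ll \eta^\alpha \ps(B_U(R))$. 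Your ``careful cusp analysis'' is gesturing at the right place, but the concrete tool is friendliness, and you should name it: without it the ratio argument does not close, even in the convex cocompact case. Second, uniformity of the equidistribution theorem across slabs is a non-issue once the partition is finite (fixed $\eta$ gives finitely many $\varphi_i$, each converging); the genuine work is comparing the different PS-ball radii, which is exactly the friendliness step.
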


%The assumption that all cusps of maximal rank is vacuously satisfied when $\Gamma$ is convex cocompact. This assumption is required because the behaviour of the PS measure is complicated and not well-understood in general. Our argument relies on knowing that the PS measure of a ball of radius $(1+\eta)T$ is close to the PS measure of a ball of radius $T$ (see Lemma \ref{lem: freindly used} for the precise statement we require). In general, this is not known. However, when the PS measure is \emph{absolutely friendly}, this follows from \cite[Corollary 9.15]{BR eqdistr}. Moreover, by \cite{friendly}, the PS measure is absolutely friendly if and only if all cusps have maximal rank, by \cite{friendly}. 

By the shadow lemma, Proposition \ref{prop:shadow lemma}, we obtain the following corollary, which will in turn imply Proposition \ref{prop: vector asymptotic intro}: %. In $U\backslash G$, this takes the following form:

%To better understand the above approximation, one can use the shadow lemma to deduce the following corollary in the convex cocompact case.

\begin{corollary}\label{cor: asymptotic}
    Assume that $\Gamma$ is convex cocompact. For any $\varphi\in C_c(U\backslash G)$ and every $x \in U\backslash G$ such that $\Psi(x)^-\in\Lambda(\Gamma),$ as $T \to \infty$, \[\frac{1}{T^{\delta_\Gamma/2}} \sum\limits_{\gamma \in \Gamma_T} \varphi(x\gamma) \asymp \int_{P}\frac{\varphi(\pi_U(p))}{(x\star\pi_U(p))^{\delta_\Gamma}}d\nu(p),\]
    where the implied constant depends on $x$ and $\Gamma$. 
\end{corollary}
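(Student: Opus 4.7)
The plan is to combine the non-effective ratio theorem (Theorem \ref{thm: non effective ratio}) with the shadow lemma (Proposition \ref{prop:shadow lemma}). Since $\Gamma$ is convex cocompact, it is in particular geometrically finite, and moreover $\Lambda(\Gamma)=\Lambda_r(\Gamma)$ because there are no parabolic limit points; so the hypothesis $\Psi(x)^-\in\Lambda(\Gamma)$ of the corollary is exactly the hypothesis of Theorem \ref{thm: non effective ratio}. Applying the theorem immediately yields
\[
\sum_{\gamma\in\Gamma_T}\varphi(x\gamma)\ \sim\ I(\varphi,T,x),
\]
so it remains only to estimate $I(\varphi,T,x)$.

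The goal is to show
\[
I(\varphi,T,x)\ \asymp\ T^{\delta_\Gamma/2}\int_P\frac{\varphi(\pi_U(p))}{(x\star\pi_U(p))^{\delta_\Gamma}}\,d\nu(p),
\]
by replacing, inside the defining integral \eqref{eq: defn of I varphi T x}, the quantity $\ps_{\Psi(x)\Gamma}(B_U(R))$ with $R^{\delta_\Gamma}$ up to multiplicative constants. In the convex cocompact setting, the shadow lemma provides exactly such an estimate: $\ps_{y\Gamma}(B_U(R))\asymp R^{\delta_\Gamma}$ uniformly for $y$ in compact subsets of $G$ and for $R$ in a suitable range. Substituting $R=\sqrt T/(x\star\pi_U(p))$ produces the desired bound on the integrand; pulling the factor $T^{\delta_\Gamma/2}$ outside the integral and combining with the asymptotic from Theorem \ref{thm: non effective ratio} gives the corollary after dividing by $T^{\delta_\Gamma/2}$.

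The main thing to check carefully is that this substitution can be done uniformly under the integral. Three observations suffice. First, $\varphi\in C_c(U\backslash G)$, so the integral is effectively over the compact set $\pi_U^{-1}(\supp\varphi)\cap P$. Second, $y\mapsto x\star y$ is continuous on $U\backslash G$ and strictly positive (since $E_{1,n+1}\Psi(y)$ is the nonzero matrix whose first row equals the $(n{+}1)$-th row of the invertible matrix $\Psi(y)$), so it is bounded above and below by positive constants on the effective domain, depending on $x$ and $\supp\varphi$. Third, consequently, for $T$ sufficiently large, $\sqrt T/(x\star\pi_U(p))$ is uniformly large over the effective domain, so the shadow lemma estimate applies with uniform implied constants, and the resulting integrand is bounded and hence integrable against $\nu$.

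The only genuine obstacle is the applicability of the pure power-law form of the shadow lemma, and this is precisely where the convex cocompact assumption is essential: in the general geometrically finite setting the PS measure of a ball scales non-uniformly near parabolic fixed points, so one cannot pull a single exponent $\delta_\Gamma$ out of $I(\varphi,T,x)$ and must retain the more intricate asymptotic of Theorem \ref{thm: non effective ratio}.
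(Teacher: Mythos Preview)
Your approach is essentially the same as the paper's: invoke Theorem \ref{thm: non effective ratio} and then reduce $I(\varphi,T,x)$ to a pure power of $T$ via the convex cocompact shadow lemma. However, there is one genuine gap in your application of the shadow lemma.

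Proposition \ref{prop:shadow lemma} is stated only for points $x\in\supp\bms$, i.e.\ points with \emph{both} $x^+$ and $x^-$ in $\Lambda(\Gamma)$. Your hypothesis gives only $\Psi(x)^-\in\Lambda(\Gamma)$; there is no reason $\Psi(x)^+$ should lie in the limit set, so $\Psi(x)\Gamma$ need not belong to $\supp\bms$ and the shadow lemma does not apply to it directly. (Your phrase ``uniformly for $y$ in compact subsets of $G$'' is also slightly off: the base point in $I(\varphi,T,x)$ is the fixed point $\Psi(x)\Gamma$; only the radius $\sqrt{T}/(x\star\pi_U(p))$ varies.) The paper fixes this by using $\Psi(x)^-\in\Lambda_r(\Gamma)$ to find a nearby BMS point $w\in B_U(r)\Psi(x)\Gamma\cap\supp\bms$, then sandwiching
\[
\ps_w(B_U(T-r))\le \ps_{\Psi(x)\Gamma}(B_U(T))\le \ps_w(B_U(T+r)),
\]
and applying the shadow lemma to $w$. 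This yields $\ps_{\Psi(x)\Gamma}(B_U(T))\asymp T^{\delta_\Gamma}$ for $T$ large, with implied constants depending on $x$ through $r$. Once you insert this step, your argument goes through exactly as in the paper.
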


\begin{remark}
The proof also works for $\Gamma$ geometrically finite when the geodesic of $\Psi(x)\Gamma$ is bounded. We must then assume that $\Psi(x)^- \in \Lambda_r(\Gamma).$
\end{remark}

In order to state the quantitative version of Theorem \ref{thm: non effective ratio}, we need an additional definition, which gives a precise formulation of the notion that $x \in G/\Gamma$ does not escape to the cusps ``too quickly'': 

\begin{definition}\label{def: Diophantine} For $0<\eps<1$ and $s_0\ge 1$, we say that $x\in G/\Gamma$ with $x^- \in \Lambda(\Gamma)$ is \textbf{$(\eps,s_0)$-Diophantine} if for all $s>s_0$,
	\begin{equation*}
    d(\mathcal{C}_0,a_{-s}x)<(1-\eps)s,
	\end{equation*} 
where $\mathcal{C}_0$ is a compact set arising from the thick-thin decomposition, and is fully defined in \S\ref{section: thick thin decomposition}.
We say that $x\in G/\Gamma$ is \textbf{$\eps$-Diophantine} if it is $(\eps,s_0)$-Diophantine for some $s_0$. 
\end{definition} 

\begin{remark}
A point $x\in G/\Gamma$ is $\eps$-Diophantine for some $\eps>0$ if and only if $x^-\in\Lambda_r(\Gamma)$, because Definition \ref{def: Diophantine} precisely says that $x^- \not\in \Lambda_{bp}(\Gamma)$, by the construction of the thick-thin decomposition.
\end{remark}

When $\Gamma$ is convex cocompact, every $x \in G/\Gamma$ with $x^- \in \Lambda(\Gamma)$ is $\eps$-Diophantine for some $\eps$, because all limit points are radial in this case. Observe also that in the lattice case, this condition is always satisfied, because $\Lambda(\Gamma) = \partial(\H^n)$. See \cite{BR eqdistr} for further discussion of this definition.

\begin{definition} \label{def: property A}
We say that $\Gamma$ satisfies \textbf{property A} if one of the following holds:
\begin{itemize}
    \item $\Gamma$ is convex cocompact, or
    \item $\Gamma$ is geometrically finite, and either \begin{enumerate}
        \item $n \le 4$ and $\H^n/\Gamma$ has a cusp of rank $n-1$, or 
        \item $\delta_{\Gamma}>n-2$.
    \end{enumerate} %and either $n=2,3$ and $\delta_\Gamma \ge \frac{n-1}{2}$, or $n\ge 4$ and $\delta_{\Gamma}>n-2$.
\end{itemize}
\end{definition}

%Note that property A may be replaced with the assumption that 

%See \S\ref{section: thick thin decomposition} for the definition of the rank of a cusp.

\begin{remark}
The assumptions on $\Gamma$ in Definition \ref{def: property A} are to ensure the effective equidistribution theorem in \cite[Theorem 1.4]{BR eqdistr} holds (see Theorem \ref{thm;br equidistr} for a statement of this theorem in this setting). As discussed in \cite{BR eqdistr}, this theorem holds whenever the frame flow satisfies an explicit exponential mixing statement, \cite[Assumption 1.1]{BR eqdistr}, and this condition is satisfied under the conditions in Definition \ref{def: property A}. However, Definition \ref{def: property A} could be replaced with assuming that the more technical statement \cite[Assumption 1.1]{BR eqdistr} is satisfied.% For Theorem \ref{thm;br equidistr}, Definition \ref{def: property A} can be replaced with assuming that \cite[Assumption 1.1]{BR eqdistr} holds; the maximal cusp rank assumption is not needed for this result.
\end{remark}

Throughout the paper, the notation $$x \ll y$$ means there exists a constant $c$ such that $$x \le cy.$$ If a subscript is denoted, e.g. $\ll_\Gamma$, this explicitly indicates that this constant depends on $\Gamma$.

\begin{theorem}\label{thm: main} Let $\Gamma$ satisfy property A. For any $0<\eps<1$, there exist $\ell=\ell(\Gamma)\in\N$ and $\kappa=\kappa(\Gamma,\eps)$ satisfying: for every $\varphi\in C^\infty_c(U\backslash G)$  and for every $x\in U\backslash G$ such that $\Psi(x)\Gamma$ is $\eps$-Diophantine, and for all $T\gg_{\Gamma,\supp{\varphi},x}1$, 
\begin{align*}
    &\left|\frac{\sum_{\gamma\in\Gamma_T}\varphi(x\gamma)}{\int_{P}\ps_{\Psi(x)\Gamma}\left(B_U\left(\frac{\sqrt{T}}{x\star \pi_U(p)}\right)\right){\varphi(\pi_U(p))}d\nu(p)}-1\right|\\
    &\ll_{\Gamma,\supp\varphi,x} T^{-\kappa}\left(1 + S_\ell(\varphi)\nu(\varphi\circ\pi_U)\inv\right).
    \end{align*}
\end{theorem}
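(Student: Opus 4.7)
The plan is to reduce the counting problem $\sum_{\gamma\in\Gamma_T}\varphi(x\gamma)$ to an effective equidistribution statement for horospherical orbits in $G/\Gamma$, via Theorem \ref{thm;br equidistr} of \cite{BR eqdistr}. This follows the overall strategy of Eskin--McMullen, adapted to the infinite-volume setting by using the Burger--Roblin measure in place of Haar and Patterson--Sullivan densities in place of uniform densities. The effective rate of Theorem \ref{thm;br equidistr} under the $\eps$-Diophantine hypothesis is what produces the $T^{-\kappa}$ error, while the Sobolev factor $S_\ell(\varphi)$ tracks the dependence on the test function.

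First, I would linearize the norm constraint. For each $\gamma\in\Gamma$, decompose $\Psi(x)\gamma = u(\gamma)\,\Psi(x\gamma)$ in the Iwasawa decomposition, with $u(\gamma)\in U$ and $\Psi(x\gamma)\in AK$. A direct matrix calculation in $\SL_{n+1}(\R)$ (using that the $(1,n+1)$-entry of $u_\t a_s$ is essentially $-\tfrac12\|\t\|^2 e^{-s}$ while the $(1,1)$-entry is $e^s$), together with definition \eqref{eq: star} of the $\star$-product, yields the key equivalence
\[
\|\gamma\|\le T \iff \|u(\gamma)\|\le \frac{\sqrt{T}}{x\star x\gamma}\cdot(1+o_T(1)),
\]
which is precisely why $B_U\bigl(\sqrt{T}/(x\star\pi_U(p))\bigr)$ appears in $I(\varphi,T,x)$.

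Next, I would identify $U\backslash G$ with $P$ via the big Bruhat cell, set $R(p):=\sqrt{T}/(x\star\pi_U(p))$, and smooth the sum in the $P$-direction by a bump $\rho_\eta\in C_c^\infty(P)$ of $\nu$-mass one concentrated within distance $\eta$ of the identity. After writing $\Psi(x)\gamma=u(\gamma)p_\gamma$ in the $UP$-decomposition, this turns the sum (up to an error $O(S_1(\varphi)\cdot\eta)$) into
\[
\int_P \varphi(\pi_U(p))\left(\sum_{\gamma\in\Gamma}\rho_\eta(p\inv p_\gamma)\,\mathbf{1}\{\|u(\gamma)\|\le R(p)\}\right)d\nu(p).
\]
For each fixed $p$, the inner sum is an orbit-counting integral of a test function on $G/\Gamma$ supported in a thickened $U$-ball. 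Using the product structure of the BR measure on the big cell together with Theorem \ref{thm;br equidistr} (applied to the orbit of $x_0:=\Psi(x)\Gamma$ along $B_U(R(p))$), this inner sum equals $\ps_{x_0}(B_U(R(p)))$ up to a power-saving error of the form $R(p)^{-\kappa_0}S_\ell(\rho_\eta)$. Integrating the main term back against $\varphi(\pi_U(p))\,d\nu(p)$ recovers $I(\varphi,T,x)$.

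Finally, optimizing $\eta$ as a suitable power of $T$ yields a total absolute error of size $T^{-\kappa}(1+S_\ell(\varphi))$; dividing by the main term, whose size is controlled by the shadow lemma (Proposition \ref{prop:shadow lemma}) and $\nu(\varphi\circ\pi_U)$, gives the stated relative bound. The chief obstacle is that the $U$-ball radius $R(p)$ varies with $p$ across $\supp\varphi$, including into the regime where $R(p)$ is small, so Theorem \ref{thm;br equidistr} has to be applied with uniform effectiveness over a range of scales. In the geometrically finite case further care is required, since the PS mass of $U$-balls fluctuates irregularly near cusps (the very mechanism behind the Maucourant--Schapira non-existence result \cite{MauSchap}); property A and the $\eps$-Diophantine hypothesis on $\Psi(x)\Gamma$ are precisely the ingredients that feed into \cite{BR eqdistr} to make these uniform estimates go through.
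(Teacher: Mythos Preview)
Your overall architecture is right: convert the $\Gamma$-sum on $U\backslash G$ into a horospherical orbit integral on $G/\Gamma$, feed it into Theorem~\ref{thm;br equidistr}, and then optimize a thickening parameter as a power of $T$. However, the specific mechanism you describe has a genuine gap.

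The thickening must be in the $U$-direction, not the $P$-direction. After your $P$-smoothing, the inner expression
\[
\sum_{\gamma\in\Gamma}\rho_\eta(p^{-1}p_\gamma)\,\mathbf{1}\{\|u(\gamma)\|\le R(p)\}
\]
is still a point evaluation at $\Psi(x)\Gamma$ of a folded function on $G/\Gamma$; it is \emph{not} an integral along a $U$-orbit, and Theorem~\ref{thm;br equidistr} says nothing about point values. What actually turns the lattice sum into an expanding $U$-orbit integral is Ledrappier's trick: choose a bump $\psi\in C_c^\infty(B_U(\eta))$ with $\int\psi=1$, write $\varphi(\pi_U(g))=\int \psi(u_\t u(g)^{-1}\!)\,\varphi(\pi_U(u_\t g))\,d\t$ (Lemma~\ref{lem: go from phi to an integral}), and then sum over $\gamma$ to obtain Proposition~\ref{prop: setting up for equidistribution}, which sandwiches $\sum_{\gamma\in\Gamma_T}\varphi(x\gamma)$ between $\int_{B_U((\sqrt T\pm c)/r_\pm \pm\eta)}F(u_\t\Psi(x)\Gamma)\,d\t$. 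Only now can Theorem~\ref{thm;br equidistr} be applied.

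The paper also handles the variation of the radius $\sqrt T/(x\star y)$ across $\supp\varphi$ differently from your continuous $P$-integration: it takes a partition of unity $\varphi=\sum_i\varphi_i$ with $R_{\varphi_i}/r_{\varphi_i}-1\ll\eta$ and controlled total Sobolev norm (Corollary~\ref{cor: partition of the function}), proves a small-support estimate for each piece (Lemma~\ref{lem: small varphi bring into integral}, coming from \S\ref{sec: proof of small support thm}), and then sums. A further ingredient you do not mention is that the sandwich produces PS balls of radii $(\sqrt T\pm c)/r_\pm\pm\eta$ rather than exactly $\sqrt T/(x\star y)$; comparing these requires boundary control of $\ps$, obtained from the \emph{friendliness} of the PS density (Lemma~\ref{lem: freindly used}, via Corollary~\ref{cor:diophantine multiplicative}), not the shadow lemma. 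Finally, the division by the main term uses the doubling property of $\ps$ (Lemma~\ref{lem: PS is doubling}) to bound $\ps_{\Psi(x)\Gamma}(B_U(\sqrt T/r))/I(\varphi,T,x)$ by a constant times $\nu(\varphi\circ\pi_U)^{-1}$.
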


The dependencies in this statement are quite explicit. The dependence of $T$ on $x$ in Theorem \ref{thm: main} arises from the constant in Lemma \ref{lem: bound on ug}, which is explicitly defined in that proof, and the precise Diophantine nature of $x$, through Theorem \ref{thm;br equidistr} (i.e. the $\eps$ and $s_0$ that appear in Definition \ref{def: Diophantine}). The implied dependence on $x$ in the conclusion is discussed at the end of section \S\ref{section: proof of main thm}.

If the support of the function is small enough, then we can get a more explicit estimate (see \S\ref{sec: proof of small support thm}). This is used as a main step in the proof of Theorem \ref{thm: main}. 

%For $x\in U\backslash G$ and a compact set $H\subset U\backslash G$, let $\mathcal{R}(H,x):=\max\limits_{y,z\in H}\frac{x\star y}{x\star z}.$

%\begin{theorem}\label{thm: main small support}
 %   Let $\Gamma$ satisfy property A. For any $0<\eps<1$, there exist $\ell=\ell(\Gamma)\in\N$ and $\kappa=\kappa(\Gamma,\eps)$ satisfying: for every $x\in U\backslash G$ such that $\Psi(x)\Gamma$ is $\eps$-Diophantine and every compact $\Omega\subset G$, there exists $T_0=T_0(x,\Omega)$ so that for every $T\ge T_0$, there exists $\eta=\eta(T,\ell,\kappa,n,\Omega)>0$ such that if $\varphi\in C^\infty_c(U\backslash G)$ with $\Psi(\supp\varphi)\subseteq \Omega$ and satisfies $\mathcal{R}(\supp\varphi,x)-1<\eta$, then for every $y\in\supp\varphi$,
  %  \begin{align*}
   %     &\left|\frac{1}{\ps_{\Psi(x)\Gamma}\left(B_U\left(\frac{\sqrt{T}}{x\star y}\right)\right)}\sum_{\gamma\in\Gamma_T}\varphi(x\gamma)-\int_{P}{\varphi(\pi_U(p))}d\nu(p)\right| \ll_{\Gamma,\Omega,x}S_\ell(\varphi)T^{-\kappa}.
%    \end{align*}
%\end{theorem}

This paper is organized as follows. In \S\ref{section: notation}, we present notation used throughout the paper, the definitions and fundamental properties of the measures we are working with, and the equidistribution theorems that will be key in our arguments. In \S\ref{section: G mod U}, we explore the duality between $\Gamma$ orbits on $U\backslash G$ and of $U$ orbits on $G/\Gamma$, and prove key lemmas that are common to the proofs of both Theorems \ref{thm: non effective ratio} and \ref{thm: main}. This involves a thickening argument, due to Ledrappier, to reduce the problem to that of equidistribution of $U$ orbits.  In \S\ref{section: proof of noneffective ratio}, we prove Theorem \ref{thm: non effective ratio}, using an equidistribution theorem of Mohammadi and Oh, Theorem \ref{thm: non effective eqdstr}. In \S\ref{section: proof of main thm}, we prove Theorem \ref{thm: main}, using a quantitative equidistribution theorem, Theorem \ref{thm;br equidistr}. %A key point in this proof is the \emph{global friendliness} of the PS measure when $\Gamma$ has all cusps of maximal rank; see Corollary \ref{cor: global friendly PS}. 
Finally, in \S\ref{section: applications}, we consider two specific examples, and prove Proposition \ref{prop: vector asymptotic intro}.\newline

\noindent\emph{Acknowledgements:} We would like to thank Barak Weiss for bringing this problem to our attention, and Amir Mohammadi for many useful discussions. We are grateful to the anonymous referee for their insightful comments on an earlier version of this manuscript, which significantly improved the paper. The first author was partially supported by the Eric and Wendy Schmidt Fund for Strategic Innovation.

\section{Notation and preliminary results in $G/\Gamma$}\label{section: notation}

Let $G= \SO(n,1)^\circ$ and let $\Gamma \subseteq G$ be a Zariski dense discrete subgroup. %Let $e$ denote the identity in $G$.
Let $\pi_\Gamma:G\to G/\Gamma$ be the quotient map. %Let $d$ be the right invariant Riemannian metric. %For $L \subseteq G$ and $h \in G$, define $$L_T(h) := \{g \in L : d(h,g)\le T\}.$$ When $h=e$, the identity in $G$, we will simply write $L_T$.

Let $\Lambda(\Gamma)\subseteq\partial(\H^n)$ denote the limit set of $G/\Gamma$, i.e., the set of all accumulation points of $\Gamma z$ for some $z\in\H^n\cup \partial(\H^n)$.

The \emph{convex core} of $X:=G/\Gamma$ is the image in $X$ of the minimal convex subset of $\H^n$ which contains all geodesics connecting any two points in $\Lambda(\Gamma)$.
	
We say that $\Gamma$ is \emph{geometrically finite} if a unit neighborhood of the convex core of $\Gamma$ has finite volume. 
	
Fix a reference point $o \in \H^n$. 
Let $K=\text{Stab}_G(o)$ and let $d$ denote the left $G$-invariant metric on $G$ which induces the hyperbolic metric on $K\backslash G=\mathbb{H}^n$. 
Fix $w_o \in \opt1(\H^n)$ and let $M = \stab_G(w_o)$ so that $\opt1(\H^n)$ may be identified with $M\backslash G$. For $w \in \opt1(\H^n)$, $$w^\pm \in \partial\H^n$$ denotes the forward and backward endpoints of the geodesic $w$ determines. For $g \in G$, we define $$g^\pm := w_o^\pm g.$$

We say that a limit point $\xi\in\Lambda(\Gamma)$ is \emph{radial} if there exists a compact subset of $X$ so that some (and hence every) geodesic ray toward $\xi$ has accumulation points in that set. We denote by $\Lambda_r (\Gamma)$ the set of all radial limit points. 

An element $g\in G$ is called \emph{parabolic} if the set of fixed points of $g$ in $\partial(\mathbb{H}^n)$ is a singleton. We say that a limit point is \emph{parabolic} if it is fixed by a parabolic element of $\Gamma$.  A parabolic limit point $\xi\in\Lambda(\Gamma)$ is called \emph{bounded} if the stabilizer $\Gamma_\xi$ acts cocompactly
	on $\Lambda(\Gamma)-{\xi}$. 
	
We denote by $\Lambda_r (\Gamma)$ and $\Lambda_{bp} (\Gamma)$ the set of all radial limit points and the set of all bounded parabolic limit points, respectively.
	Since $\Gamma$ is geometrically finite (see \cite{bowditch}),  \[\Lambda (\Gamma)=\Lambda_r (\Gamma)\cup\Lambda_{bp} (\Gamma).\]

Let $A=\left\{a_s\::\:s\in\R\right\}$ be a one parameter diagonalizable subgroup such that $M$ and $A$ commute, and such that the right $a_s$ action on $M\backslash G=\opt1(\H^n)$ corresponds to unit speed geodesic flow. %Let $U$ denote the expanding horospherical subgroup and $\tilde U$ be the contracting horospherical subgroup with respect to the forward direction of $a_s$. Let $P=MA\tilde U$ be the parabolic subgroup. 

We embed $G$ in $\SL_{n+1}(\R)$, parametrize $A$ by $A=\{a_s : s\in \R\}$, where\[a_s = \begin{pmatrix} e^{s} & & \\ & I & \\ & & e^{-s} \end{pmatrix}\] and $I$ denotes the $(n-1)\times(n-1)$ identity matrix, and let \[M=\left\{\begin{pmatrix} 1 & & \\ & m & \\ &&1\end{pmatrix}:m \in \SO(n-1)\right\}.\]  %\[a_s := \begin{pmatrix} e^{s/2} & 0 \\ 0 & e^{-s/2}\end{pmatrix}, s\in \R\] and \[
	%= \left\{\pm\begin{pmatrix}e^{i\theta} & 0\\ 0 & e^{-i\theta}\end{pmatrix}\::\:\theta\in\R\right\}. \]
	%\tcr{Do we need an explicit description of $M$?}
	
	%Let $X=G/\Gamma$ and 
	Let $U$ denote the expanding horospherical subgroup\[
	U=\left\{g\in G\::\:a_{-s}ga_{s}\rightarrow e\text{ as }s\rightarrow+\infty \right\},\]
	let $\tilde U$ be the contracting horospherical subgroup\[
	\tilde U=\left\{g\in G\::\:a_{s}ga_{-s}\rightarrow e\text{ as }s\rightarrow+\infty \right\},\]
	and let $P=MA\tilde U$ be the parabolic subgroup.
	
	The group $U$ is isomorphic to $\R^{n-1}$. We use the parametrization $U=\{u_\t  : \t \in \R^{n-1}\}$, where $\t$ is viewed as a row vector, and $$u_\t = \begin{pmatrix} 1 & \t & \frac{1}{2}\norm{\t}^2 \\ & I & \t^T \\ & & 1 \end{pmatrix}.$$ For more details on these parametrizations and the interactions between these groups, see \cite[\S 2]{BR eqdistr}. %Similarly, $\tilde U = \{v_\t$ $$v_\t =  \begin{pmatrix} 1 & & \\ \t^T & I & \\ \frac{1}{2}|\t|^2 & \t & 1  \end{pmatrix}.$$
	
	\subsection{Thick-thin Decomposition and the Shadow Lemma}\label{section: thick thin decomposition}

	There exists a finite set of $\Gamma$-representatives $\xi_1,\dots,\xi_q\in\Lambda_{bp}(\Gamma)$. For $i=1,\dots,q$, fix $g_i\in G$ such that $g_i^- =\xi_i$,
	and for any $R>0$, set
	\begin{equation}\label{eq:siegel sets}
	\calh_i(R):=\bigcup_{s>R} Ka_{-s} U g_i,\quad\mbox{and}\quad\mathcal{X}_i(R):=\calh_i(R)\Gamma
	\end{equation}
	(recall, $K=\text{Stab}_G (o)$). 
	Each $\calh_i(R)$ is a horoball of depth $R$.
	
	The \emph{rank} of $\mathcal{H}_i(R)$ is the rank of the finitely generated abelian subgroup $\Gamma_{\xi_i}=\operatorname{Stab}_\Gamma(\xi_i)$. It is known that each rank is strictly smaller than $2\delta_\Gamma$.
	
	Let 
	\begin{equation}\label{eq:BMS supp defn}
	    \supp\bms:=\left\{g\Gamma\in X\::\:g^{\pm}\in\Lambda(\Gamma)\right\}.
	\end{equation}
	Note that the condition $g^\pm \in \Lambda(\Gamma)$ is independent of the choice of representative of $x=g\Gamma$ in the above definition, because $\Lambda(\Gamma)$ is $\Gamma$-invariant. Thus, the notation $x^\pm \in \Lambda(\Gamma)$ is well-defined. For now, $\supp\bms$ is simply notation, but as we will see, this coincides with the support of the BMS measure, $\bms.$ We say that a point $x\in X$ is a \emph{BMS point} if $x\in\supp\bms$. 
	
	According to \cite{bowditch}, there exists $R_0\geq 1$ such that $\mathcal{X}_1(R_0),\dots,\mathcal{X}_q(R_0)$  are disjoint, and for some compact set $\mathcal{C}_0\subset X$, \[
\supp\bms\subseteq\mathcal{C}_0\sqcup\mathcal{X}_1(R_0)\sqcup\cdots\sqcup\mathcal{X}_q(R_0).\]

\subsection{Patterson-Sullivan Measure}\label{section: PS}

	A family of finite measures $\{\mu_x\::\:x\in\H^n\}$ on $\partial(\H^n)$ is called a \emph{$\Gamma$-invariant conformal density of dimension $\delta_\mu>0$} if for every $x,y\in\H^n$, $\xi\in\partial(\H^n)$ and $\gamma\in\Gamma$,\[
	\gamma_*\mu_x=\mu_{x\gamma}\:\text{ and }\:\frac{d\mu_y}{d\mu_x}(\xi)=e^{-\delta_\mu \beta_\xi(y,x)},\]
	where $\gamma_*\mu_x(F)=\mu_x(F\gamma)$ for any Borel subset $F$ of $\partial(\H^n)$. 
	
	We let $\{\nu_x\}_{x \in \H^n}$ denote the Patterson-Sullivan density on $\partial \H^n$, that is, the unique (up to scalar multiplication) conformal density of dimension $\delta_\Gamma$.
	
	For each $x\in\H^n$, we denote by $m_x$ the unique probability measure on $\partial(\H^n)$ which is invariant under the compact subgroup $\text{Stab}_G (x)$. Then $\left\{m_x\::\:x\in \H^n\right\}$ forms a $G$-invariant conformal density of dimension $n-1$, called the Lebesgue density.
	Fix $o \in \H^n$. 
	
	For $x, y \in \H^n$ and $\xi \in \partial(\H^n)$, the \emph{Busemann function} is given by \[ \beta_\xi(x,y):=\lim\limits_{t\to \infty} d(x,\xi_t) - d(y,\xi_t)\] where $\xi_t$ is a geodesic ray towards $\xi$.
	
	For $g\in G$, we can define measures on $Ug$ using the conformal densities defined previously. The Patterson-Sullivan measure (abbreviated as the PS-measure):
	\begin{equation}\label{eqn; defn of ps} 
	d\ps_{Ug}(u_\t g) := e^{\delta_\Gamma \beta_{(u_\t g)^+}(o, u_\t g(o))}d\nu_o((u_\t g)^+),
	\end{equation} 
	and the Lebesgue measure \[
	d\leb_{Ug}(u_\t g):=e^{(n-1) \beta_{(u_\t g)^+}(o, u_\t g(o))}dm_o((u_\t g)^+).\] 
	
	Note that for any $g\in G$, a point $h\in Ug$ satisfies $h\in\supp\ps_{Ug}$ if and only if $h^+\in\Lambda(\Gamma)$. Therefore, we refer to the points $x\in X$ which satisfy $x^+\in\Lambda(\Gamma)$ as \emph{PS points}. 
	
	The conformal properties of $m_x$ and $\nu_x$ imply that this definition is independent of the choice of $o\in\H^n$.
	
	We often view $\ps_{Ug}$ as a measure on $U$ via $$d\ps_{g}(\t):= d\ps_{Ug}(u_\t g).$$
	
	The measure	\[
	d\leb_{Ug}(u_\t g)=d\leb_U(u_\t)=d\t\]is independent of the orbit $Ug$
	and is simply the Lebesgue measure on $U\equiv\R^{n-1}$ up to a scalar multiple.
	
	If $x \in X$ is such that $x^- \in \Lambda_r(\Gamma)$, then $$u \mapsto ux$$ is injective, and we can define the PS measure on $Ux \subseteq X$, denoted $\ps_x$, simply by pushforward of $\ps_g$, where $x = g\Gamma$. In general, defining $\ps_x$ requires more care, see e.g. \cite[\S 2.3]{joinings} for more details. As before, we can view $\ps_x$ as a measure on $U$ via \[
	d\ps_x(\t)=d\ps_x(u_{\t}x).\]

%\begin{lemma}\label{lem; continuity x to psx} 
 %   The map $x \mapsto \ps_x$ is continuous, where the topology on the space of regular Borel measures on $U$ is given by $\mu_n \to \mu \iff \mu_n(f)\to\mu(f)$ for all $f\in C_c(U)$.
%\end{lemma}
%\begin{proof}
%	This is clear from the definition of the PS measure, since it is defined using the Busemann function and stereographic projection.
%\end{proof}

Recall that for $T>0$, \begin{equation} B_U(T) := \{u_\t : \|\t\| \le T\},\label{eq; defn of U balls} \end{equation} where $\|\t\|$ is the max norm of $\t$ as measured in $\R^{n-1}$.
%For a subset $S\subseteq\R^{n-1}$ and $\xi>0$, let \[\mathcal{N}(S,\xi)=\{u_\t\::\: \norm{\t-S}<\xi\}.\]

	We will need the following version of Sullivan's shadow lemma:

		\begin{proposition}[{\cite[Prop. 5.1, Remark 5.2]{MauSchap}}]\label{prop:shadow lemma}
		%Let $\H^n/\Gamma$ be a nonelementary geometrically finite hyperbolic surface.
		There exists a constant $\lambda=\lambda(\Gamma)\ge1$ such that for all $x \in \supp\bms$ and all $T>0$, we have \begin{align}\lambda\inv T^{\delta_\Gamma}e^{(k(x,T)-\delta_\Gamma)d(\mathcal{C}_0, a_{-\log T}x)}
		&\le \ps_x(B_U(T)) \\
		&\le \lambda T^{\delta_\Gamma}e^{(k(x,T)-\delta_\Gamma)d(\mathcal{C}_0\nonumber, a_{-\log T}x)},\nonumber\end{align} where $k(x,T)$ denotes the rank of the cusp containing $a_{-\log T}x$ (and is zero if $a_{-\log T}x \in \mathcal{C}_0$).\end{proposition}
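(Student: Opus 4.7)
The plan is to reduce to a uniform unit-ball estimate by conjugating with the geodesic flow, and then analyze separately the thick and thin regions of $\supp\bms$. The first step is the scaling identity
\[\ps_x(B_U(T))=T^{\delta_\Gamma}\,\ps_{a_{-\log T}x}(B_U(1)).\]
To obtain it, substitute $\t=T\t'$ in the defining integral \eqref{eqn; defn of ps}; use the conjugation $a_{-s}u_\t a_s=u_{e^{-s}\t}$ with $s=\log T$ to write $u_\t x=a_su_{\t'}a_{-s}x$; observe that $(u_\t x)^+=(u_{\t'}a_{-s}x)^+$ because $a_s$ fixes $w_o^\pm$; and combine the Busemann cocycle with the conformality of the Patterson–Sullivan density to convert the weight at $o$ against $u_\t x\cdot o$ into the weight at $a_{-s}o$ against $u_{\t'}a_{-s}x\cdot o$, which produces the factor $e^{\delta_\Gamma s}=T^{\delta_\Gamma}$. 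The claim then becomes
\[\ps_y(B_U(1))\asymp e^{(k(y)-\delta_\Gamma)\,d(\mathcal{C}_0,y)}\qquad\text{for }y:=a_{-\log T}x\in\supp\bms,\]
where $k(y)$ denotes the rank of the horoball containing $y$, with $k(y)=0$ when $y\in\mathcal{C}_0$.

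When $y\in\mathcal{C}_0$ both exponents vanish, and the statement reduces to the classical Sullivan estimate $\ps_y(B_U(1))\asymp_\Gamma 1$, uniformly for $y\in\mathcal{C}_0\cap\supp\bms$. This follows from compactness together with the facts that $\nu_o$ is finite and charges every relatively open subset of $\Lambda(\Gamma)$, so that the PS mass of the shadow of $B_U(1)y$ is bounded above and below by constants depending only on $\Gamma$.

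The main work is when $y$ lies at depth $d\gg R_0$ inside the horoball $\mathcal{X}_i(R_0)$ of rank $k$. Writing $y=k'a_{-d}u\,g_i\Gamma$ via the Siegel-set description \eqref{eq:siegel sets}, I would pull $y$ back to a BMS point near $\mathcal{C}_0$ by the geodesic flow, so that the shadow of $B_U(1)y$ on $\partial\H^n$ concentrates near the parabolic fixed point $\xi_i=g_i^-$. The critical input is the Stratmann–Velani density estimate at a bounded parabolic fixed point of rank $k$, namely $\nu_o\bigl(\{\eta:|\eta-\xi_i|<r\}\bigr)\asymp r^{2\delta_\Gamma-k}$ in an appropriate visual metric. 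Combined with the facts that this shadow has visual radius $\asymp e^{-d}$ and that the Busemann weight in \eqref{eqn; defn of ps} contributes a factor $e^{\delta_\Gamma d}$ along this shadow, one obtains the claimed $e^{(k-\delta_\Gamma)d}$. The uniform constant $\lambda=\lambda(\Gamma)$ arises from the finiteness of the set of cusp representatives $\xi_1,\dots,\xi_q$.

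The main obstacle is precisely the Stratmann–Velani density estimate at bounded parabolic fixed points. It rests on the cocompact action of $\Gamma_{\xi_i}$ on $\Lambda(\Gamma)\setminus\{\xi_i\}$, which is built into the definition of boundedness, together with the strict bound $k<2\delta_\Gamma$ recorded in \S\ref{section: thick thin decomposition}; the latter is what makes the Poincaré-style partial sum $\sum_{\gamma\in\Gamma_{\xi_i}}e^{-\delta_\Gamma\,d(o,\gamma o)}$ convergent and hence the density finite. Patching the cusp and thick estimates then yields the proposition.
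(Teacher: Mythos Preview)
The paper does not give its own proof of this proposition; it is quoted directly from \cite[Prop.~5.1, Remark~5.2]{MauSchap} and used as a black box. So there is nothing in the paper to compare your argument against.

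That said, your sketch follows the standard route by which this shadow lemma is established in the literature (Sullivan in the convex cocompact case, Stratmann--Velani and then Schapira/Maucourant--Schapira in the geometrically finite case): reduce to a unit-ball estimate via the scaling identity $\ps_x(B_U(T))=T^{\delta_\Gamma}\ps_{a_{-\log T}x}(B_U(1))$, handle the compact core by a uniform Sullivan estimate, and in each cusp invoke the local density asymptotic $\nu_o(B(\xi_i,r))\asymp r^{2\delta_\Gamma-k}$ at a bounded parabolic point of rank $k$. This is the right architecture, and the scaling identity is indeed used implicitly in the paper (see the last display in the proof of Corollary~\ref{cor:new planarity BMS}).

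Your cusp paragraph is where the sketch becomes loose. Writing $y=k'a_{-d}u\,g_i\Gamma$ does not by itself put the horosphere $Uy$ in a convenient position, since $K$ does not normalize $U$; the phrase ``pull $y$ back to a BMS point near $\mathcal{C}_0$ by the geodesic flow'' conflicts with the fact that you have already fixed $y$ and want $\ps_y(B_U(1))$ directly. The honest argument identifies the image of $B_U(1)y$ under the visual map $u_\t y\mapsto (u_\t y)^+$ with a shadow from $o$ of controlled hyperbolic radius, compares the Busemann weight to the depth via the horoball geometry, and then applies the Stratmann--Velani density estimate to that shadow. Your exponent bookkeeping $e^{\delta_\Gamma d}\cdot (e^{-d})^{2\delta_\Gamma-k}=e^{(k-\delta_\Gamma)d}$ is the correct endpoint of that computation, but the geometric identifications in between need to be made precise; this is exactly what the cited reference carries out.
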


	\begin{remark}
		In \cite{MauSchap}, the shadow lemma is proven using the distance measured in $\H^n/\Gamma.$ However, because $\mathcal{C}_0$ is $K$-invariant and $\H^n = K \backslash G$, we obtain the form above.
	\end{remark}
	
	\begin{remark}
	When $\Gamma$ is convex cocompact, $\mathcal{C}_0=\supp\bms$, and the shadow lemma simplifies to \[\lambda\inv T^{\delta_\Gamma}\le \ps_x(B_U(T)) \le \lambda T^{\delta_\Gamma}.\]
	\end{remark}
	
	We will need the following, which says that the PS measure is doubling.
	
	\begin{lemma}[{\cite[Corollary 9.9]{BR eqdistr}}]\label{lem: PS is doubling}
	There exist constants $\sigma_1 = \sigma_1(\Gamma)\ge \delta_\Gamma$, $\sigma_2 = \sigma_2(\Gamma)>0$ such that for every $c >0$, every $x \in \supp\bms$ and every $T>0$, \[\ps_x(B_U(cT))\ll_\Gamma \max\{c^{\sigma_1}, c^{\sigma_2}\}\ps_x(B_U(T)).\]	\end{lemma}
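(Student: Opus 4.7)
The plan is to deduce the doubling property directly from Sullivan's shadow lemma, Proposition \ref{prop:shadow lemma}. Applying it to both $\ps_x(B_U(cT))$ and $\ps_x(B_U(T))$ and forming the ratio,
\[
\frac{\ps_x(B_U(cT))}{\ps_x(B_U(T))} \ll_\Gamma c^{\delta_\Gamma} \exp\bigl( (k_1 - \delta_\Gamma) D_1 - (k_2 - \delta_\Gamma) D_2 \bigr),
\]
where $k_i$ denotes the cusp rank at $a_{-\log T_i} x$ (with the convention $k_i=0$ when $a_{-\log T_i} x \in \mathcal{C}_0$) and $D_i = d(\mathcal{C}_0, a_{-\log T_i} x)$, for $T_1 = cT$ and $T_2 = T$. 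The task reduces to bounding the exponential factor by a constant multiple of $\max(c^{\sigma_1 - \delta_\Gamma}, c^{\sigma_2 - \delta_\Gamma})$ for appropriate $\sigma_1, \sigma_2$.

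Two ingredients drive the estimate. First, since $\tau \mapsto a_{-\tau}$ is unit-speed geodesic flow, the trajectory $\tau \mapsto a_{-\tau}x$ is $1$-Lipschitz with respect to $d$, so by the triangle inequality $|D_1 - D_2| \le |\log c|$. Second, the fact recalled in \S\ref{section: thick thin decomposition} that every cusp rank satisfies $k < 2\delta_\Gamma$ gives $|k - \delta_\Gamma| \le \delta_\Gamma$ and, crucially, supplies the strict gap $2\delta_\Gamma - k_{\max} > 0$ needed to obtain $\sigma_2 > 0$.

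The proof then proceeds by case analysis on the trajectory $\{a_{-\tau}x\}$ between $\log T$ and $\log(cT)$. If both endpoints lie in $\mathcal{C}_0$, the exponent vanishes identically. If both lie in the same cusp of rank $k$, then $k_1 = k_2 = k$ and the exponent reduces to $(k - \delta_\Gamma)(D_1 - D_2)$, bounded by $\delta_\Gamma |\log c|$ in absolute value. Otherwise, by continuity of the geodesic flow and disjointness of the horoballs $\mathcal{X}_i(R_0)$, the trajectory must cross a horoball boundary at some intermediate time $\tau^*$, where $D(\tau^*) \le R_0$; combined with the Lipschitz estimate this forces $D_1, D_2 \le |\log c| + R_0$. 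In each case the exponent is controlled by a linear function of $|\log c|$ whose slope is governed by $|k_i - \delta_\Gamma|$, yielding the claimed polynomial bound after multiplying by the main $c^{\delta_\Gamma}$ factor.

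The principal subtlety is producing $\sigma_2 > 0$ in the regime $c \to 0$. A naive worst-case analysis in the same-cusp case yields only the trivial bound coming from monotonicity $\ps_x(B_U(cT)) \le \ps_x(B_U(T))$, so one must carefully track the sign of $k - \delta_\Gamma$ against whether the cusp excursion is ascending or descending. In particular, for $c \le 1$ the exponent $(k-\delta_\Gamma)(D_1 - D_2)$ combined with the $c^{\delta_\Gamma}$ prefactor produces a bound of order $c^{\min(k, 2\delta_\Gamma - k)}$, strictly positive by the constraint $k < 2\delta_\Gamma$ and bounded below by $\min(k_{\min}, 2\delta_\Gamma - k_{\max})$ across all cusps, where $k_{\min}, k_{\max}$ denote the smallest and largest cusp ranks. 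This is precisely where the strict inequality $k < 2\delta_\Gamma$ is essential; without it, the bound would degenerate into a mere restatement of monotonicity.
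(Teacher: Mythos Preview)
The paper does not prove this lemma itself; it is quoted from \cite[Corollary 9.9]{BR eqdistr}. Your approach via the shadow lemma (Proposition \ref{prop:shadow lemma}) is the natural one and is essentially correct, but the mixed-cusp case has a gap you should close.

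In your Case 3 (the endpoints lie in different cusps, or one is in $\mathcal{C}_0$ and the other in a cusp), you correctly locate an intermediate time $\tau^*$ with $D(\tau^*)\le R_0$; in fact $D(\tau^*)=0$, since the $A$-orbit of a BMS point stays in $\supp\bms\subseteq\mathcal{C}_0\sqcup\bigsqcup_i\mathcal{X}_i(R_0)$ and must therefore pass through $\mathcal{C}_0$ between disjoint horoballs. However, the \emph{individual} bounds $D_1,D_2\le|\log c|+R_0$ you record are not by themselves sufficient to produce $\sigma_2>0$. For example, if $c<1$, $k_1>\delta_\Gamma$, and $k_2<\delta_\Gamma$, then using only the individual bounds the exponent $(k_1-\delta_\Gamma)D_1+(\delta_\Gamma-k_2)D_2$ could be as large as $(k_1-k_2)|\log c|$ up to an additive constant, and $k_1-k_2$ may exceed $\delta_\Gamma$ when $\delta_\Gamma$ is large; the resulting bound $c^{\delta_\Gamma-(k_1-k_2)}$ then has a nonpositive exponent, destroying $\sigma_2>0$. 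Your final paragraph addresses only the same-cusp case and does not cover this.

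The fix is immediate: since $\tau^*$ lies between $\log(cT)$ and $\log T$, the Lipschitz estimate also gives the \emph{additive} constraint $D_1+D_2\le|\log c|+2R_0$. With this, the Case 3 exponent is at most $\max_i|k_i-\delta_\Gamma|\cdot(|\log c|+2R_0)$, and $\max_i|k_i-\delta_\Gamma|<\delta_\Gamma$ strictly because $1\le k_i<2\delta_\Gamma$, yielding the required slack. Equivalently, split the ratio at $T^*=e^{\tau^*}$ and apply the ``one endpoint in $\mathcal{C}_0$'' case to each factor. Once this is done, your argument is complete.
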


We will also require control of the PS measure of slightly larger balls, specifically as will be established below in Lemma \ref{lem: freindly used}. This will be a result of the friendliness of the PS density when $\Gamma$ is geometrically finite, established in \cite{friendly}. More specifically, we will show that the measure of the boundary of certain balls can be controlled.

Let $d$ be a left-invariant Riemannian metric on $G/\Gamma$ that projects to the hyperbolic distance on $\H^n$.

Denote by $d_E$ the Euclidean metric on $\R^{n-1}$. For a subset $S\subseteq\R^{n-1}$ and $\xi>0$, let \[
\mathcal{N}(S,\xi)=\{x\in \R^{n-1}\::\: d_E(x,S)\le\xi\}.\]
For $v\in\R^{n-1}$ and $r>0$, let \[
B(v,r)=\left\{u\in\R^{n-1}\::\:d_E(u,v)\le r\right\}\]
be the Euclidean ball of radius $r$ around $v$. 

We say that a hyperplane $L$ is \emph{on the boundary} of a closed ball $B$ if $$\emptyset\ne L\cap B \subseteq \partial(B).$$ Below, we obtain estimates for the PS measure of small neighbourhoods of hyperplanes on the boundary of a ball centered at a BMS point. Though not written here, estimates also hold when the center of the ball is a PS point but not a BMS point, as long as the ball is sufficiently small. In this case, one may use arguments similar to those in the appendix of \cite{BR eqdistr}.

We caution the reader that the estimates below hold only for hyperplanes on the boundary of such a ball; to obtain such estimates for general hyperplanes, \emph{absolute friendliness} of the PS density is necessary. By \cite[Theorem 1.9]{friendly}, this is satisfied if and only if all cusps of $\H^n/\Gamma$ are of maximal rank $n-1$ (note that this is vacuously satisfied if $\Gamma$ is convex cocompact). In this case, one may use \cite[Corollary 9.14]{BR eqdistr} when $\Gamma$ is geometrically finite or \cite[Theorem 2]{friendly 2} if $\Gamma$ is convex cocompact.

\begin{lemma}
\label{lem: new planarity density}
    There exists a constant $\alpha=\alpha(\Gamma)>0$ satisfying the following: for all $\lambda \in \Lambda(\Gamma)$, $\xi>0,$ $0<\eta\le 1$, and every hyperplane $L$ that is on the boundary of $B(\lambda,\eta)$, we have that \[\nu_o(\mathcal{N}(L,\xi)\cap B(\lambda,\eta))\ll_\Gamma \left(\frac{\xi}{\eta}\right)^\alpha \nu_o(B(\lambda,\eta)).\]
\end{lemma}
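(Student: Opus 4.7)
The plan is to combine an elementary piece of Euclidean geometry with the friendliness of the Patterson-Sullivan density $\nu_o$ established in \cite{friendly}. Recall that friendliness (in the sense of Kleinbock-Lindenstrauss-Weiss) provides, for some $\alpha = \alpha(\Gamma) > 0$, the estimate
\[
\nu_o(\mathcal{N}(L, \xi) \cap B) \ll_\Gamma \left(\frac{\xi}{\|d_L\|_B}\right)^{\!\alpha} \nu_o(B),
\]
valid for every Euclidean ball $B$ centered at a point of $\supp\nu_o = \Lambda(\Gamma)$ and every affine hyperplane $L$ meeting $B$; here $\|d_L\|_B := \sup\{d_E(x,L) : x \in B \cap \Lambda(\Gamma)\}$. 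Note that this is weaker than absolute friendliness, since the supremum is taken against the support rather than being replaced by the radius of $B$; as the remark preceding the lemma indicates, absolute friendliness fails in general for the PS density and requires restrictive assumptions on the cusp ranks.

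The first step is the geometric observation that the hypothesis $\emptyset \ne L \cap B(\lambda, \eta) \subseteq \partial B(\lambda, \eta)$ forces $L$ to be tangent to $B(\lambda, \eta)$ at a single point $p \in \partial B(\lambda, \eta)$. Consequently, the orthogonal projection of $\lambda$ onto $L$ is exactly $p$, and so
\[
d_E(\lambda, L) = |\lambda - p| = \eta.
\]
This is precisely the content of the assumption that matters: it upgrades the trivial bound $\|d_L\|_{B(\lambda,\eta)} \ge 0$ to the sharp $\|d_L\|_{B(\lambda,\eta)} \ge \eta$, since $\lambda$ itself is a limit point lying in $B(\lambda, \eta)$.

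Finally, I would substitute this lower bound into the friendliness estimate applied to $B = B(\lambda, \eta)$ (which is legitimate because $\lambda \in \Lambda(\Gamma)$), obtaining
\[
\nu_o(\mathcal{N}(L, \xi) \cap B(\lambda, \eta)) \ll_\Gamma \left(\frac{\xi}{\eta}\right)^{\!\alpha} \nu_o(B(\lambda, \eta)),
\]
as required. Since $\eta \le 1$, no additional scaling issue arises; if $\xi \ge \eta$ the bound is trivial from $\mathcal{N}(L,\xi) \cap B(\lambda,\eta) \subseteq B(\lambda,\eta)$, so the interesting range is $\xi < \eta$.

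There is no real obstacle here: the substantive input is the friendliness of the PS density from \cite{friendly}, and the only thing to verify is that tangency forces $d_E(\lambda, L) = \eta$, which is immediate. The whole point of the lemma is that restricting to hyperplanes on the boundary converts the general friendliness inequality (whose denominator is a priori uncontrolled) into a bound with the ball radius in the denominator, mimicking absolute friendliness in this special configuration.
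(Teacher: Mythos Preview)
Your proposal is correct and follows essentially the same route as the paper: invoke the friendliness of $\nu_o$ from \cite{friendly}, then use that the center $\lambda\in\Lambda(\Gamma)$ lies at distance comparable to $\eta$ from any hyperplane on the boundary of $B(\lambda,\eta)$ to lower-bound $\|d_L\|_{\nu_o,B(\lambda,\eta)}$. The only cosmetic difference is that you observe $d_E(\lambda,L)=\eta$ exactly (since a hyperplane meeting a closed ball only in its boundary is tangent), whereas the paper records the weaker $\|d_L\|_{\nu_o,B(\lambda,\eta)}\ge\eta/2$; either suffices, as the constant is absorbed into $\ll_\Gamma$.
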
 \begin{proof}
    By \cite[Theorem 1.9]{friendly}, $\nu_o$ is \emph{friendly} when $\Gamma$ is geometrically finite. In particular, this means that there exists $\alpha=\alpha(\Gamma)>0$ such that for all $\lambda \in \Lambda(\Gamma)$, $\xi>0$, $0<\eta\le1$, and every affine hyperplane $L \subseteq \partial(\H^n)$,  $$\nu_o(\mathcal{N}(L,\xi\norm{d_L}_{\nu_o,B(\lambda,\eta)})\cap B(\lambda,\eta)) \ll_\Gamma {\xi}^\alpha \nu_o (B(\lambda,\eta)),$$ 
			where \[\norm{d_L}_{\nu_o,B(\lambda,\eta)}:=\sup\left\{d(\textbf{y},L)\::\:\textbf{y}\in B(\lambda,\eta)\cap \Lambda(\Gamma)\right\}. \]
			Since $\lambda \in \Lambda(\Gamma),$ for any $L$ that is on the boundary of $B(\lambda,\eta$), we have that \[\norm{d_L}_{\nu_o,B(\lambda,\eta)} \ge \eta/2.\] Thus, for any $L$ that is on the boundary of $B(\lambda,\eta)$,  we have \[\nu_o(\mathcal{N}(\xi\eta/2)\cap B(\lambda,\eta))\ll_\Gamma \xi^\alpha \nu_o(B(\lambda,\eta)).\] Replacing $\xi$ with $2\xi \eta\inv$ then implies that for every such $L$, \begin{equation*}
			\nu_o(\mathcal{N}(L,\xi)\cap B(\lambda,\eta))\ll_\Gamma \left(\frac{\xi}{\eta}\right)^\alpha \nu_o(B(\lambda,\eta)),\end{equation*} as desired.
\end{proof}

By flowing with $a_{-s}$ for $s>0$, we obtain similar estimates for large balls centered at BMS points:

\begin{corollary} \label{cor:new planarity BMS}Let $\alpha=\alpha(\Gamma)>0$ be as in Lemma \ref{lem: new planarity density}. For every $x\in\supp\bms$ such that $x^-\in\Lambda_r(\Gamma)$, every $\eta,\xi>0,$  and every hyperplane $L$ in the boundary of $B_U(\eta)x,$ we have \[\ps_x\left(\mathcal{N}_U(L,\xi)\cap B_U(\eta)\right) \ll_\Gamma \left(\frac{\xi}{\eta}\right)^\alpha \ps_x(B_U(\eta)).\]
\end{corollary}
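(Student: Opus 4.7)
The plan is to prove Corollary \ref{cor:new planarity BMS} by reducing to Lemma \ref{lem: new planarity density} via the $a_{-s}$-flow, as the preceding remark suggests. I may assume $\xi < \eta$, since otherwise the bound is trivial.

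First, I set $s := \log\eta$ (assuming $\eta > 1$; for $\eta \le 1$ I skip to the final step). The identity $a_{-s}u_\t a_s = u_{e^{-s}\t}$ implies $a_{-s}(B_U(\eta)x) = B_U(1)(a_{-s}x)$; a hyperplane $L$ on the boundary of $B_U(\eta)x$ corresponds to a hyperplane $L'$ on the boundary of $B_U(1)(a_{-s}x)$, and $\mathcal{N}_U(L,\xi)\cap B_U(\eta)$ corresponds to $\mathcal{N}_U(L',\xi/\eta)\cap B_U(1)$. Since $A$ fixes $w_o^\pm$, we have $a_{-s}x\in\supp\bms$ with $(a_{-s}x)^-\in\Lambda_r(\Gamma)$. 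It therefore suffices to establish the bound at the rescaled point $a_{-s}x$ on $B_U(1)$, together with the ratio-invariance
\[\frac{\ps_x(E)}{\ps_x(B_U(\eta))} \asymp_\Gamma \frac{\ps_{a_{-s}x}(\Phi_s(E))}{\ps_{a_{-s}x}(B_U(1))}\]
for $E\subseteq B_U(\eta)$, where $\Phi_s(u_\t)=u_{e^{-s}\t}$. This ratio-invariance comes from direct computation using definition \eqref{eqn; defn of ps}: since $\Phi_s$ preserves the forward endpoint $(u_\t g)^+$, the $d\nu_o$-factor is unchanged under the substitution $\t = e^s\sigma$, and only the Busemann-weight factor differs; the resulting Busemann cocycle has controlled oscillation on $B_U(\eta)$.

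For the reduced ball, I transfer from the $U$-orbit to $\partial\H^n$ via the visual map $V_g: U\to\partial\H^n$, $u_\t\mapsto (u_\t g)^+$, where $g$ represents $a_{-s}x$. By \eqref{eqn; defn of ps},
\[\ps_{a_{-s}x}(E) = \int_{V_g(E)} e^{\delta_\Gamma\beta_\zeta(o,\,V_g^{-1}(\zeta)g(o))}\,d\nu_o(\zeta)\quad\text{for } E\subseteq B_U(1).\]
Since $V_g$ is a conformal diffeomorphism, its restriction to $B_U(1)$ is bi-Lipschitz (with $\Gamma$-dependent constants) onto its image---a Euclidean ball around $g^+\in\Lambda(\Gamma)$---and the Busemann weight has bounded oscillation on this image. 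The hyperplane $L'$ pushes forward to a hyperplane on the boundary of $V_g(B_U(1))$, and $\mathcal{N}_U(L',\xi/\eta)$ to a neighborhood of width $\asymp_\Gamma \xi/\eta$. Applying Lemma \ref{lem: new planarity density} with $\lambda = g^+$ to this geometric picture and absorbing the Busemann oscillation gives the desired $(\xi/\eta)^\alpha$ bound for $\ps_{a_{-s}x}$, which transfers back to $\ps_x$ via the ratio-invariance.

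The main obstacle is the ratio-invariance step: although the shadow lemma (Proposition \ref{prop:shadow lemma}) heuristically confirms the $e^{-\delta_\Gamma s}$ scaling of the PS measure under $a_{-s}$, rigorously bounding the oscillation of the Busemann cocycle $\beta_{(u_\t g)^+}(u_\t g(o), a_{-s}u_\t g(o))$ over $\t\in B_U(\eta)$ requires care, since naive estimates can diverge with $s$. The radial hypothesis $x^-\in\Lambda_r(\Gamma)$ is essential: it keeps the orbit $a_{-s}x$ from wandering unboundedly deep into a cusp, allowing uniform oscillation bounds in terms of $\Gamma$ alone, possibly by invoking the doubling of the PS measure (Lemma \ref{lem: PS is doubling}).
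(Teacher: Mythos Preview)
Your overall strategy—flow by $a_{-s}$, transfer to $\partial\H^n$ via the visual map, invoke Lemma~\ref{lem: new planarity density}—matches the paper's, but the execution has a gap and a misplaced worry.

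First, the ``main obstacle'' you identify is not real. The Busemann cocycle $\beta_{h^+}\bigl(h(o),\,(a_{-s}h)(o)\bigr)$ is \emph{exactly} $-s$ for every $h\in Ug$ (one checks that $(a_{-s}h)(o)$ lies on the geodesic through $h(o)$ with endpoints $h^\pm$, at distance $s$ toward $h^-$; equivalently, use $G$-equivariance of the Busemann function). Hence $\ps_{a_{-s}x}(e^{-s}E)=e^{-\delta_\Gamma s}\ps_x(E)$ for every Borel $E$, and your ratio identity is an \emph{equality}, not $\asymp_\Gamma$. There is no oscillation to control here, and the radial hypothesis plays no role at this step.

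Second, the genuine gap is elsewhere: having set $s=\log\eta$, the point $a_{-s}x$ can be arbitrarily deep in a cusp, and then your ``bi-Lipschitz with $\Gamma$-dependent constants'' claim for the visual map on $B_U(1)$ is unjustified. The Busemann weight $e^{\delta_\Gamma\beta_{(u_\sigma g)^+}(o,u_\sigma g(o))}$ is bounded on $B_U(1)$ only when the representative $g$ lies in a fixed compact set; for $g$ deep in a cusp the bound degenerates. The paper resolves this by inverting your order of ideas: it first proves the inequality for $x\in\mathcal{C}_0\cap\supp\bms$ and $\eta\le c^{-1}$ (where compactness of $\mathcal{C}_0$ gives the uniform Busemann bound directly), and only then uses the radial hypothesis $x^-\in\Lambda_r(\Gamma)$ to choose $s$ with $a_{-s}x\in\mathcal{C}_0$ \emph{and} $e^{-s}\eta<c^{-1}$, reducing the general case to the compact one via the exact ratio identity. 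So the radial hypothesis is used to \emph{land in} $\mathcal{C}_0$, not to bound Busemann oscillation in the scaling step.
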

\begin{proof}
    %Since the radial limit points are dense in the all the limit points, using the continuity of the PS-measure, we may assume that $x^- \in \Lambda_r(\Gamma)$.

    We will first prove that there exists a constant $c=c(\Gamma)>0$ so that for all $x \in \supp\bms\cap \mathcal{C}_0$ with $x^- \in \Lambda_r(\Gamma),$ $\xi>0$, $\eta$ satisfying $$0< \eta \le c\inv,$$ and every hyperplane $L$ in the boundary of $B_U(\eta)x$, the inequality in the statement is satisfied.
    
    Since $\mathcal{C}_0$ is compact, there exists $c=c(\Gamma)>1$ such that for any $x\in \mathcal{C}_0$ we can find $g\in G$ such that $x=g\Gamma$ and $d(o,g(o))<c$. Then, for any $u_\t \in B_U(c\inv)$, we have
	\begin{align*}
	    |\beta_{(u_\t g)^+}(o,u_\t g(o))| &\le d(u_{\t}\inv (o),g(o)) \\
	    &\le d(u_{-\t}(o),o)+d(o,g(o)) \\
	    &\le 2c.
	\end{align*}
    Then, by the definition of the PS measure, we have 
    \begin{align*}
	    \ps_x(\mathcal{N}_U(L,\xi)\cap B_U(\eta))
	    &=\ps_g(\mathcal{N}_U(L,\xi)\cap B_U(\eta))\\
	    &= \int_{\t \in \mathcal{N}_U(L,\xi)\cap B_U(\eta)} e^{\delta_{\Gamma}\beta_{(u_\t g)^+}(o,u_\t g(o))} d\nu_o((u_\t g)^+)\\
	    &\ll_\Gamma \nu_o\left(\operatorname{Pr}_{g^-}(\mathcal{N}(L,\xi)\cap B_U(\eta))\right), 
    \end{align*}
    where $\operatorname{Pr}_{g^-} : Ug \to \partial(\H^n)\setminus \{g^-\}$ is the visual map $w \mapsto w^+$.
    Using \cite[Corollary 9.5]{BR eqdistr}, we may assume that $c$ satisfies 
    \begin{equation*}\label{eq: br eqdist cor}
        \nu_o\left(\operatorname{Pr}_{g^-}(\mathcal{N}(L,\xi)\cap B_U(\eta))\right)\ll_{\Gamma}\nu_o\left(\mathcal{N}(L', c^2\xi)\cap B\left(g^+, c^2 \eta\right)\right), 
    \end{equation*}
    where $L'$ is a hyperplane in the boundary obtained from the projection of $L$ under the visual map $\operatorname{Pr}_{g^-}$. 
    Thus, for any $x\in \mathcal{C}_0$ and $u_\t \in B_U(c)$, we arrive at
    \begin{equation}\label{eq: ps measure to dens upper}
        \ps_x(\mathcal{N}_U(L,\xi)\cap B_U(\eta))\ll_{\Gamma}\nu_o\left(\mathcal{N}(L', c^2\xi)\cap B\left(g^+, c^2 \eta\right)\right),
    \end{equation}
    and in a similar way, one may also deduce 
    \begin{equation}\label{eq: ps measure to dens lower}
        \ps_x(B_U(\eta))\gg_{\Gamma}\nu_o\left(B\left(g^+, c^{-2} \eta\right)\right),
    \end{equation}
    
    Now, we may conclude
    \begin{align*}
	    \ps_x(\mathcal{N}_U(L,\xi)\cap B_U(\eta))
	    &\ll_{\Gamma}\nu_o\left(\mathcal{N}(L', c^2\xi)\cap B\left(g^+, c^2 \eta\right)\right) & \text{by \eqref{eq: ps measure to dens upper}} \\
			    &\ll_\Gamma \left(\frac{\xi}{\eta}\right)^\alpha \nu_o(B(g^+,c^2\eta))&\text{by Lemma \ref{lem: new planarity density}}\\
			    &\ll_\Gamma \left(\frac{\xi}{\eta}\right)^\alpha\nu_o(B(g^+,{c}^{-2} \eta)) &\text{by \cite[Lemma 9.6/9.7]{BR eqdistr}} \\
			    &\ll_\Gamma \left(\frac{\xi}{\eta}\right)^\alpha \ps_x(B_U(\eta))  & \text{by \eqref{eq: ps measure to dens lower}},
			\end{align*} 

			Now, let $x \in \supp\bms$ with $x^- \in \Lambda_r(\Gamma)$ and let $\eta>0$. Since $a_{-s}x$ has accumulation points in $\mathcal{C}_0$, there exists $s>0$ so that $e^{-s}\eta<c\inv$ and $a_{-s}x \in \mathcal{C}_0$. By the first step of the proof, we then have that
			\begin{align*}
			    \frac{\ps_x(\mathcal{N}_U(L,\xi)\cap B_U(\eta))}{\ps_x(B_U(\eta))}&=\frac{\ps_{a_{-s}x}(\mathcal{N}_U(L,e^{-s}\xi)\cap B_U(e^{-s}\eta))}{\ps_{a_{-s}x}(B_U(e^{-s}\eta))}\\
			    &\ll_\Gamma \left(\frac{e^{-s}\xi}{e^{-s}\eta}\right)^\alpha= \left(\frac{\xi}{\eta}\right)^\alpha.
			\end{align*}
		%	The result then follows for these $x$ by flowing. Density implies it holds for all $x \in \supp\bms.$
\end{proof}
					%	Now, consider $x\in\supp\bms\cap\mathcal{C}_0$ and a hyperplane $L$ in the boundary of $B_U(\eta)x$.  Since $x \in \mathcal{C}_0,$ $ce^d(o,\pi(x))$ is bounded by a constant $\tilde{c}>0$ depending only on $\Gamma$. If we then assume that $\eta \le \tilde{c}\inv,$ we have   
			\begin{proposition}\label{prop: boundary control BMS}
			     Let $\alpha=\alpha(\Gamma)>0$ be as in Corollary \ref{cor:new planarity BMS}. Then for all $x \in\supp\bms$ such that $x^-\in\Lambda_r(\Gamma)$, $T>0$, and $0<\eps \le 1$, we have that \[\ps_x(B_U((1+2\eps)T))-\ps_x(B_U(T))\ll_\Gamma \eps^\alpha \ps_x(B_U(T)).\]
			\end{proposition}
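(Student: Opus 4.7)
The plan is to cover the shell $B_U((1+2\eps)T)\setminus B_U(T)$ by small Euclidean neighborhoods of the hyperplanes bounding $B_U((1+2\eps)T)$, and then apply Corollary \ref{cor:new planarity BMS} to each.

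Since $B_U(R)$ is the max-norm cube of radius $R$ in $U\cong\R^{n-1}$, the boundary of $B_U((1+2\eps)T)$ consists of $2(n-1)$ faces, each contained in a hyperplane $L_i^{\pm}:=\{\t: t_i=\pm(1+2\eps)T\}$. The first step is the pointwise inclusion
\[
B_U((1+2\eps)T)\setminus B_U(T)\;\subseteq\;\bigcup_{i,\pm}\mathcal{N}_U\bigl(L_i^{\pm},\,2\eps T\bigr)\cap B_U((1+2\eps)T).
\]
This holds because if $T<\|\t\|_\infty\le (1+2\eps)T$, then choosing a coordinate $j$ with $|t_j|=\|\t\|_\infty$ and matching sign, the Euclidean distance from $\t$ to $L_j^{\pm}$ equals $(1+2\eps)T-|t_j|\le 2\eps T$.

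Each hyperplane $L_i^{\pm}$ is on the boundary of $B_U((1+2\eps)T)x$, so Corollary \ref{cor:new planarity BMS} applies with $\xi=2\eps T$ and $\eta=(1+2\eps)T$, yielding
\[
\ps_x\bigl(\mathcal{N}_U(L_i^{\pm},2\eps T)\cap B_U((1+2\eps)T)\bigr)\ll_\Gamma\left(\frac{2\eps}{1+2\eps}\right)^{\alpha}\ps_x(B_U((1+2\eps)T))\ll_\Gamma \eps^\alpha\,\ps_x(B_U((1+2\eps)T)),
\]
where the last step uses $\eps\le 1$ to bound $(1+2\eps)^{-1}$ below. Summing over the $2(n-1)$ hyperplanes (a constant depending only on $\Gamma$) and applying the doubling bound Lemma \ref{lem: PS is doubling} with dilation $c=1+2\eps\in(1,3]$ to replace $\ps_x(B_U((1+2\eps)T))$ by $\ps_x(B_U(T))$ (at the cost of an $O_\Gamma(1)$ factor) then completes the argument.

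I do not expect a significant obstacle. The only subtle point is the choice of reference ball: one must use $B_U((1+2\eps)T)$ rather than $B_U(T)$, so that the covering hyperplanes $L_i^{\pm}$ genuinely lie on the boundary of the ball being fed into Corollary \ref{cor:new planarity BMS}. The bounding hyperplanes $\{t_i=\pm T\}$ of the smaller cube would fail this hypothesis, since they pass through the interior of the shell rather than enclosing its outer face. The final appeal to doubling is harmless because the dilation factor is uniformly bounded.
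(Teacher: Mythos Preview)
Your proposal is correct and follows essentially the same approach as the paper: cover the shell by $\xi$-neighborhoods of the $2(n-1)$ bounding hyperplanes of the larger cube, apply Corollary \ref{cor:new planarity BMS} to each, sum, and then use the doubling Lemma \ref{lem: PS is doubling} (with dilation bounded since $\eps\le 1$) to pass from $\ps_x(B_U((1+2\eps)T))$ back to $\ps_x(B_U(T))$. Your remark about needing the hyperplanes to lie on the boundary of the \emph{larger} ball is exactly the point; the only minor slip is that the bound $\frac{2\eps}{1+2\eps}\le 2\eps$ holds for all $\eps>0$ and does not itself require $\eps\le 1$---that hypothesis is used only in the doubling step, as you correctly note afterward.
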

			\begin{proof}
			By the geometry of $(B_U((1+2\eps)T)-B_U(T))x$, there exists a constant $m$ depending only on $n$ and hyperplanes $L_1,\ldots, L_m$ in the boundary of $B_U((1+2\eps)T)x$ so that \[(B_U((1+2\eps)T)-B_U(T))x \subseteq\bigcup\limits_{i=1}^m \mathcal{N}_U(L_i,2\eps T) \cap B_U((1+2\eps)T)x.\] Then by Corollary \ref{cor:new planarity BMS}, we have that \begin{align*}
			    &\ps_x(B_U((1+2\eps)T))-\ps_x(B_U(T)) \\&\le \sum\limits_{i=1}^m \ps_x\left(\mathcal{N}_U(L_i,2\eps T)\cap B_U((1+2\eps)T)\right) \\
			    &\ll_\Gamma \left(\frac{2\eps T}{(1+2\eps)T}\right)^\alpha \ps_x(B_U((1+2\eps)T)) \\
			    &\ll_\Gamma \eps^{\alpha} \ps_x(B_U(T)) &\text{by Lemma \ref{lem: PS is doubling}}
			\end{align*} Note that the assumption $\eps\le 1$ is for convenience in the last step only; one may still use Lemma \ref{lem: PS is doubling} if $\eps$ is not bounded, but the exponent on $\eps$ must change.
			\end{proof}
			
		We can obtain estimates for all $(\eps,s_0)$-Diophantine points for balls that are sufficiently large (in a way that is uniform and linear in $s_0$). In fact, for any compact set $\Omega \subseteq G/\Gamma$, there exists a $T_0=T_0(\Omega)$ satisfying the statement below for all $x\in \Omega$ with $x^- \in \Lambda_r(\Gamma),$ see e.g. \cite[Lemma 3.3]{joinings}. Thus, the statement below could take many forms and this is not as strong as possible; we simply write it in a way that is useful for our setting.
			
		\begin{corollary} \label{cor:diophantine multiplicative}Let $\alpha = \alpha(\Gamma)>0$ be as in Proposition \ref{prop: boundary control BMS}, let $0<\eps \le 1$ and let $s_0\ge 1.$ There exists $T_0 = T_0(\Gamma,s_0)>0$ so that for every $(\eps,s_0)$-Diophantine point $x \in G/\Gamma$, all $T > 2T_0+1$, and all $\xi>0$,
		    %Let $x \in G/\Gamma$ be such that $x^- \in \Lambda_r(\Gamma).$  Then there exists $T_0=T_0(x)>0$ (the horodistance thing) so that for all $T > T_0+1$ and all $\eps>0,$ 
		    \begin{equation}\label{eq:boundary control statement}\ps_x(B_U((1+2\xi)T))-\ps_x(B_U(T)) \ll_\Gamma \left(\xi+\frac{T_0}{T-T_0}\right)^\alpha \ps_x(B_U(T)).\end{equation} In particular, if $x^- \in \Lambda_r(\Gamma),$ there exists $T_0=T_0(x)>0$ so that for all $T \ge 2T_0+1$ and all $\xi>0,$ \eqref{eq:boundary control statement} holds. 
		\end{corollary}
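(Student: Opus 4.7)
The plan is to deduce the corollary from Proposition \ref{prop: boundary control BMS} together with the doubling estimate in Lemma \ref{lem: PS is doubling}, using the Diophantine hypothesis on $x$ to secure the required uniformity in $x$. First, since $x$ is $(\eps,s_0)$-Diophantine, the remark after Definition \ref{def: Diophantine} gives $x^-\in\Lambda_r(\Gamma)$; if $x^+\notin\Lambda(\Gamma)$ then $\ps_x$ vanishes on every horospherical ball and the estimate is vacuous, so we may assume $x\in\supp\bms$, which is the setting of Proposition \ref{prop: boundary control BMS}.

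I would treat the cases $\xi\le 1$ and $\xi>1$ separately. When $\xi\le 1$, Proposition \ref{prop: boundary control BMS} applied directly yields
\[\ps_x(B_U((1+2\xi)T))-\ps_x(B_U(T))\ll_\Gamma \xi^\alpha\,\ps_x(B_U(T)),\]
and since $\xi^\alpha\le(\xi+T_0/(T-T_0))^\alpha$, the claimed bound follows. When $\xi>1$, I would decompose the annulus $B_U((1+2\xi)T)\setminus B_U(T)$ into dyadic shells, apply Proposition \ref{prop: boundary control BMS} to each shell (with a uniformly bounded ratio), and telescope using Lemma \ref{lem: PS is doubling}; the implicit constant in $\ll_\Gamma$ then absorbs the resulting geometric factors.

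The constant $T_0=T_0(\Gamma,s_0)$ is chosen so that for $T>2T_0+1$ the time $\log T$ exceeds $s_0$ and the Diophantine bound $d(\mathcal{C}_0,a_{-s}x)<(1-\eps)s$ is in force at all relevant scales between $T$ and $(1+2\xi)T$. The additive correction $T_0/(T-T_0)$ in the final estimate arises from rescaling $T$ to $T-T_0$ in order to apply Proposition \ref{prop: boundary control BMS} at a scale where the flowed point $a_{-\log T}x$ lies at controlled distance from $\mathcal{C}_0$; tracking this perturbation through the covering-by-hyperplane-neighborhoods argument reproduces precisely that term. For the second assertion, any $x$ with $x^-\in\Lambda_r(\Gamma)$ is $(\eps,s_0)$-Diophantine for \emph{some} parameters depending on $x$, and taking $T_0=T_0(x)$ from the first part applied with those parameters concludes the argument.

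The main obstacle I expect is reconciling the friendliness exponent $\alpha$ with the doubling exponents $\sigma_1,\sigma_2$ in the $\xi>1$ regime: in general $\alpha$ need not dominate $\sigma_1$, so obtaining the bound in the stated form may require either a finer decomposition than naive dyadic telescoping, or the observation (consistent with the authors' remark preceding the corollary) that for the intended applications only the regime $\xi+T_0/(T-T_0)\le 1$ is meaningful, with larger $\xi$ absorbed into the implicit constant.
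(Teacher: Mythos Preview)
There is a genuine gap at the very first reduction step. You assert that if $x^+\notin\Lambda(\Gamma)$ then $\ps_x$ vanishes on every horospherical ball, and hence you may assume $x\in\supp\bms$. This is false: the support of $\ps_x$ on $Ux$ consists of those $u_\t x$ with $(u_\t x)^+\in\Lambda(\Gamma)$, and the condition $x^+\notin\Lambda(\Gamma)$ only rules out the single point $\t=0$. For a Diophantine (hence radial) point $x$ that is not itself in $\supp\bms$, the measure $\ps_x(B_U(T))$ is typically positive once $T$ is large enough to reach a PS point, and the estimate is \emph{not} vacuous. Consequently you cannot apply Proposition~\ref{prop: boundary control BMS} to $x$ directly, and everything downstream of that step is unsupported.

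The paper's mechanism is exactly the one your sketch is missing. By \cite[Lemma~3.8]{BR eqdistr} there is a constant $T_0=T_0(\Gamma,s_0)$ such that every $(\eps,s_0)$-Diophantine $x$ admits a genuine BMS point $y\in B_U(T_0)x\cap\supp\bms$; this is where the uniformity in $s_0$ enters, not through any argument about the time at which the Diophantine inequality ``kicks in''. One then sandwiches $B_U(T-T_0)y\subseteq B_U(T)x\subseteq B_U(T+T_0)y$, applies Proposition~\ref{prop: boundary control BMS} to the BMS point $y$ at radius $T-T_0$ with the dilated factor $(1+2\xi)(1+2T_0/(T-T_0))$, and reads off the additive $T_0/(T-T_0)$ term directly from that rescaling. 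The dyadic-shell argument you propose for $\xi>1$ is not how the paper proceeds and, as you yourself note, would not in general recover the exponent $\alpha$; the paper simply applies Proposition~\ref{prop: boundary control BMS} once to $y$ and absorbs the cross term $\xi T_0/(T-T_0)\le\xi$ using $T\ge 2T_0+1$.
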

		\begin{proof} 
		    By \cite[Lemma 3.8]{BR eqdistr}, there exists $T_0=T_0(\Gamma,s_0)>0$ (in fact, it is linear in $s_0$) so that for every $(\eps,s_0)$-Diophantine point $x$, there exists  $$y \in B_U(T_0)x \cap \supp\bms.$$  For $T \ge T_0$, we have $$B_U(T-T_0)y\subseteq B_U(T)x\subseteq B_U(T+T_0)y.$$ In particular, \[B_U((1+2\xi)T)x \subseteq B_U((1+2\xi)(T+T_0))y \] and  \begin{equation}\label{eq:choice of T in boundary cor} B_U(T-T_0)y\subseteq B_U(T)x.\end{equation} 
		    
		    Since all Diophantine points are radial, by assuming that $T \ge 2T_0+1$, we may use Proposition \ref{prop: boundary control BMS} below:
		    \begin{align*}
		        &\ps_x(B_U(1+2\xi)T)-\ps_x(B_U(T))\\&\le\ps_y(B_U(1+2\xi)(T+T_0))-\ps_y(B_U(T-T_0))\\	  &\le\ps_y\left(B_U\left((1+2\xi)\left(1+\frac{2T_0}{T-T_0}\right)(T-T_0)\right)\right)-\ps_y(B_U(T-T_0))\\
		        &\ll_\Gamma\left(\xi+\frac{T_0}{T-T_0}+\frac{\xi T_0}{T-T_0}\right)^\alpha\ps_y(B_U(T-T_0))\text{ by Proposition \ref{prop: boundary control BMS}}\\
		        &\ll_\Gamma\left(\xi+\frac{(1+\xi)T_0}{T-T_0}\right)^\alpha\ps_x(B_U(T)) &\text{ by \eqref{eq:choice of T in boundary cor}}
		    \end{align*} 
		    Since $T \ge 2T_0+1,$ $$\frac{\xi T_0}{T-T_0} \le \xi,$$ and it can be absorbed into the $\xi$ term, completing the proof.
		\end{proof}
			
%	\begin{corollary}
%	    Let $\alpha = \alpha(\Gamma)>0$ be as in Lemma \ref{lem: new planarity density}, let $0<\eps \le 1$, and let $s_0\ge 1$. There exists $T_0 = T_0(\Gamma,s_0)>0$ so that for every $(\eps,s_0)$-Diophantine point $x \in G/\Gamma$, all $T\ge 2T_0+1,$ and all $\xi>0$, \begin{equation}
%	        \label{eq:new planarity additive} \frac{\ps_x(B_U(T+\xi))-\ps_x(B_U(T))}{\ps_x(B_U(T))} \ll_\Gamma \left(\frac{\xi}{T}\right)^\alpha.
%	    \end{equation} In particular, if $x^- \in \Lambda_r(\Gamma),$ there exists $T_0=T_0(x)>0$ so that for all $T \ge 2T_0+1$ and all $\xi>0,$ \eqref{eq:new planarity additive} holds.
%	\end{corollary}
%	\begin{proof}
%	    Let $T_0=T_0(\Gamma,s_0)$ be as in Corollary \ref{cor:diophantine multiplicative}. Then for any $x \in G/\Gamma$ that is $(\eps,s_0)$-Diophantine, 
%	\end{proof}

    We may now state the form in which we will need this control. The implied constant below depends on $x$ through the initial time in Corollary \ref{cor:diophantine multiplicative}, so it can be made uniform over a compact set or over all points with the same Diophantine properties. However, this level of detail is not necessary for our results.
	
	\begin{lemma}\label{lem: freindly used}
	Let $\alpha = \alpha(\Gamma)>0$ be as in Corollary \ref{cor:diophantine multiplicative}. For every $x \in G/\Gamma$ with $x^- \in \Lambda_r(\Gamma),$ $c>0$, $0<\eta \le 1,$ and $0<r_+<\ell<r_-$ satisfying 
    %Let $\Omega\subseteq G/\Gamma$ be a compact set, $x\in \Omega$ with $x^- \in \Lambda_r(\Gamma)$, $c>0$, $0<\eta\le 1$, and $0<r_{+}<\ell<r_{-}$ such that
    \[\frac{r_+}{r_-}<1+\eta,\]
   there exists $T_0=T_0(x,r_+,r_-)>0$ such that for any $T>T_0$,
\begin{align*}
    & \left|\ps_{x}\left(B_U\left(\frac{\sqrt{T}\pm c}{r_{\pm}}\pm\eta\right)\right)-\ps_{x}\left(B_U\left(\frac{\sqrt{T}}{\ell}\right)\right)\right|\nonumber\\
    &\ll_{\Gamma,x} \left(\eta+\frac{c+1}{\sqrt{T}}\right)^\alpha\ps_{x}\left(B_U\left(\frac{\sqrt{T}}{\ell}\right)\right)
\end{align*}
	\end{lemma}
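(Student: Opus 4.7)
The plan is to reduce both estimates (the $+$ case and the $-$ case) to Corollary \ref{cor:diophantine multiplicative} by rewriting the radii $(\sqrt{T}\pm c)/r_\pm \pm\eta$ as $(1+2\xi_\pm)$ times a suitable reference radius. For the $+$ case the reference radius will be $\sqrt{T}/\ell$ (the smaller one), and for the $-$ case it will be $(\sqrt{T}-c)/r_- - \eta$ (again the smaller one). In each case I compute $\xi_\pm$ explicitly, for instance
\[
1+2\xi_+ = \frac{(\sqrt{T}+c)/r_+ + \eta}{\sqrt{T}/\ell} = \frac{\ell}{r_+}\left(1+\frac{c}{\sqrt{T}}\right) + \frac{\ell\,\eta}{\sqrt{T}},
\]
and a similar expression in the $-$ case. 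Then $\xi_\pm$ splits into a ``ratio part'' $\ell/r_\pm - 1$ plus terms of size $O((c+1)/\sqrt{T})$. By the hypothesis $r_+/r_- < 1+\eta$ (read as the close-radii assumption, so that $\ell/r_+,\, r_-/\ell \in (1,1+\eta)$), the ratio part is bounded by $\eta$, and keeping $r_\pm$ fixed so that $\ell$ is bounded, the cross terms are absorbed, giving $\xi_\pm \ll_{\Gamma,x} \eta + (c+1)/\sqrt{T}$.

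Next I choose $T_0(x,r_+,r_-)$ so that both reference radii exceed $2\tilde T_0(x)+1$, where $\tilde T_0(x)$ is the threshold from Corollary \ref{cor:diophantine multiplicative} for the radial point $x$; since the reference radii grow like $\sqrt{T}/\ell$ and $(\sqrt{T}-c)/r_-$ respectively, taking $T_0$ of the form $C(r_+,r_-)(\tilde T_0(x)+c+1)^2$ is enough. Applying Corollary \ref{cor:diophantine multiplicative} then yields
\[
\left|\ps_{x}\Bigl(B_U\bigl((1+2\xi_\pm)R_\pm\bigr)\Bigr) - \ps_{x}(B_U(R_\pm))\right|
\ll_\Gamma \left(\xi_\pm + \tfrac{\tilde T_0}{R_\pm - \tilde T_0}\right)^\alpha \ps_x(B_U(R_\pm)),
\]
where $R_+=\sqrt{T}/\ell$ and $R_- = (\sqrt{T}-c)/r_- - \eta$. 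The secondary error term $\tilde T_0/(R_\pm - \tilde T_0)$ is $O(1/\sqrt{T})$ and so folds into the $(c+1)/\sqrt{T}$ term.

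Finally, in the $-$ case the right-hand side is expressed in terms of $\ps_x(B_U(R_-))$ rather than $\ps_x(B_U(\sqrt{T}/\ell))$; since $R_-/(\sqrt{T}/\ell) = 1/(1+2\xi_-)$ lies in a bounded interval (uniformly in $T$ for $T>T_0$), the doubling lemma (Lemma \ref{lem: PS is doubling}) converts one into the other up to a multiplicative constant depending on $\Gamma$. Combining the two cases via the triangle inequality gives the stated bound.

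The one point that needs real care, rather than bookkeeping, is the interplay between the ``ratio error'' $\ell/r_\pm - 1$ and the hypothesis on $r_+/r_-$: one must verify that $\ell$ being squeezed between $r_+$ and $r_-$ forces $\ell/r_+ - 1$ and $1 - \ell/r_-$ to both be at most $\eta$, which then lets everything be packaged into the single factor $(\eta+(c+1)/\sqrt{T})^\alpha$. The rest is a direct application of Corollary \ref{cor:diophantine multiplicative} together with doubling.
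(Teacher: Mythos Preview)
Your proposal is correct and follows essentially the same approach as the paper: both apply Corollary~\ref{cor:diophantine multiplicative} with reference radius $\sqrt{T}/\ell$ (respectively $(\sqrt{T}-c)/r_- - \eta$) and bound the resulting multiplicative increment $\xi_\pm$ by $\eta + O((c+1)/\sqrt{T})$ using $\ell/r_+,\, r_-/\ell \in (1,1+\eta)$. One minor simplification: in the $-$ case you do not need the doubling lemma, since $R_- \le \sqrt{T}/\ell$ already gives $\ps_x(B_U(R_-)) \le \ps_x(B_U(\sqrt{T}/\ell))$ by monotonicity; likewise the final ``triangle inequality'' is superfluous, as the $\pm$ estimates are separate.
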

	\begin{proof}
	First, observe that by Corollary \ref{cor:diophantine multiplicative}, there exists $T_1=T_1(x)$ so that for all $T \ge 2T_1+1$ and all $\xi>0$, \begin{equation}
	    \label{eq:using multiplicative to deduce additive} \frac{\ps_x(B_U(T+\xi))-\ps_x(B_U(T))}{\ps_x(B_U(T))} \ll_\Gamma \left(\xi + \frac{T_1}{T-T_1}\right)^\alpha\ll_\Gamma \left(\xi+\frac{T_1}{T}\right)^\alpha.
	\end{equation} This follows immediately from the fact that $$\ps_x(B_U(T+\xi)) \le \ps_x(B_U(1+2\xi)T).$$ 
	
	Thus, if we assume that $T$ is sufficiently large so that $\sqrt{T}/\ell \ge 2T_1+1$ (and note that this condition can be taken to rely on $r_-$ rather than on $\ell$ specifically), and note that by the assumption, \begin{equation*}
    1\le\frac{\ell}{r_{+}}\le 1+\eta, \label{eq: comparing star and R r}
\end{equation*} we see from \eqref{eq:using multiplicative to deduce additive} that
	\begin{align*}
& \ps_{x}\left(B_U\left(\frac{\sqrt{T}+c}{r_{+}}+\eta\right)\right)-\ps_{x}\left(B_U\left(\frac{\sqrt{T}}{\ell}\right)\right)\\
& \ll_{\Gamma}\left(\frac{r_{+}\inv (\sqrt{T}+c)+\eta-\ell\inv \sqrt{T}}{\ell\inv \sqrt{T}}+\frac{T_1}{\ell\inv\sqrt{T}}\right)^\alpha\ps_{x}\left(B_U\left(\frac{\sqrt{T}}{\ell}\right)\right)\\
&\ll_\Gamma \left(\frac{\ell r_{+}\inv (\sqrt{T}+c)+\ell\eta- \sqrt{T}+T_1}{\sqrt{T}}\right)^\alpha\ps_{x}\left(B_U\left(\frac{\sqrt{T}}{\ell}\right)\right)\\
&\ll_\Gamma \left(\frac{(1+\eta) (\sqrt{T}+c)+\ell\eta- \sqrt{T}+T_1}{\sqrt{T}}\right)^\alpha\ps_{x}\left(B_U\left(\frac{\sqrt{T}}{\ell}\right)\right)\\
&\ll_\Gamma \left(\eta+\frac{{c}+\eta\ell+T_1}{\sqrt{T}}\right)^\alpha\ps_{x}\left(B_U\left(\frac{\sqrt{T}}{\ell}\right)\right)\\
&\ll_{\Gamma,x}\left(\eta+\frac{{c}+\eta\ell+1}{\sqrt{T}}\right)^\alpha\ps_{x}\left(B_U\left(\frac{\sqrt{T}}{\ell}\right)\right)\end{align*}
Note that the implied constant depends on $x$ because we have absorbed the constant $T_1.$ Now, choose $T_0 \ge T_1$ so that $T \ge T_0$ implies $\frac{\ell}{\sqrt{T}}<1$ (a condition which depends on $x$ and $r_-$ in this case), which implies the claim because we may then absorb this term into the $\eta$ term. 

The second case can be shown in a similar way, with the choice of $T_0$ depending on $x$ and $r_+$ there. 
	\end{proof}

\subsection{Burger-Roblin and Bowen-Margulis-Sullivan Measures}\label{sec: BR defn}

Let $\pi:\opt1(\H^n)\rightarrow\H^n$ be the natural projection. 
	Recalling the fixed reference point $o \in \H^n$ as before, the map $$w \mapsto(w^+,w^-, s:= \beta_{w^-}(o,\pi(w)))$$ is a homeomorphism between $\opt1(\H^n)$ and $$(\partial(\H^n)\times\partial(\H^n) - \{(\xi,\xi):\xi \in \partial(\H^n)\})\times \R.$$
	
	This homeomorphism allows us to define the Bowen-Margulis-Sullivan (BMS) and the Burger-Roblin (BR) measure on $\opt1(\H^n)$, denoted by $\tbms$ and $\tbr$, respectively:  \[d\tbms(w):=e^{\delta_\Gamma \beta_{w^+}(o,\pi(w))}e^{\delta_\Gamma \beta_{w^-}(o,\pi(w))}d\nu_o(w^+)d\nu_o(w^-)ds,\]
	\[d\tbr(w):= e^{(n-1)\beta_{w^+}(o,\pi(w))}e^{\delta_\Gamma \beta_{w^-}(o,\pi(w))} dm_o(w^+)d\nu_o(w^-)ds.\]
	
	The conformal properties of $\left\{\nu_x\right\}$ and $\left\{m_x\right\}$ imply that these definitions are independent of the choice of $o\in\H^n$. Using the identification of $\opt1(\H^n)$ with $M\backslash G$, we lift the above measures to $G$ so that they are all invariant under $M$ from the left. By abuse of notation, we use the same notation ($\tbms$ and $\tbr$). These measures are left $\Gamma$-invariant, and hence induce locally finite Borel measures on $X$, which are the Bowen-Margulis-Sullivan measure $\bms$ and the Burger-Roblin measure $\br$, respectively.
	
	%The conformal properties of $\left\{\nu_x\right\}$ and $\left\{m_x\right\}$ imply that the above definition is independent of the choice of $o\in\H^n$. Using the identification of $\opt1(\H^n)$ with $M\backslash G$, we lift the above measure to $G$ so that it is invariant under $M$ from the left. By abuse of notation, we use the same notation ($\tbr$). This measure is left $\Gamma$-invariant, and hence induce locally finite Borel measures on $X$, which is the Burger-Roblin measure $\br$.
	
	Note that\[
	\supp\left(\br\right)=\left\{x\in X\::\:x^{-}\in\Lambda(\Gamma)\right\},	\]
    and the support of the BMS measure indeed satisfies \eqref{eq:BMS supp defn}. 
	
	Recall the definition of $\tilde{U}$, and $P = MA\tilde{U}$ from the begining of \S \ref{section: notation}. $P$ is the stabilizer of $w_o^+$ in $G$. Hence, one can define a measure $\nu$ on $Pg$ for $g \in G$, which will give us a product structure for $\tbms$ and $\tbr$ that will be useful in our approach. For any $g\in G$, define \begin{equation}\label{eq; defn of nu}
	d\nu(pg) := e^{\delta_\Gamma \beta_{(pg)^-}(o, pg(o))} d\nu_o(w_o^-pg)dmds,
	\end{equation} on $Pg$, where $s = \beta_{(pg)^-}(o,pg(o))$, $p = mav \in MA\tilde{U}$ and $dm$ is the probability Haar measure on $M$. 
	
	Then for any $\psi\in C_c(G)$ and $g\in G$, we have 
	\begin{equation}\label{eq:br product structure}
	\tbr(\psi)=\int_{Pg}\int_U \psi(u_\t pg)d\t d\nu(pg) 
	\end{equation} and 	\begin{equation}\label{eq:bms product structure}
	\tbms(\psi)=\int_{Pg}\int_U \psi(u_\t pg)d\ps_{pg}(\t)d\nu(pg).
	\end{equation}

\subsection{Sobolev Norms}

In the next section we formulate the equidistribution and effective equidistribution results which we will use in the proof of the main theorems. In order to formulate them, we first need to define Sobolev norms.  Our proofs will require constructing smooth indicator functions and partitions of unity with controlled Sobolev norms. This section also includes lemmas constructing such partitions.

%The error terms in our main theorems are in terms of Sobolev norms, which we define here. 

For $\ell\in\N$, $1\leq p\leq\infty$, and $\psi\in C^\infty(X)\cap L^p(X)$ we consider the following Sobolev norm\[
	S_{p,\ell}(\psi)=\sum\norm{U\psi}_p
	\]
where the sum is taken over all monomials $U$ in a fixed basis of $\mathfrak{g}=\mbox{Lie}(G)$ of order at most $\ell$, and $\norm{\cdot}_p$ denotes the $L^p(X)$-norm. Since we will be using $S_{2,\ell}$ most often, we set $$S_\ell=S_{2,\ell}.$$ 
	
%Our proofs will require constructing smooth indicator functions and partitions of unity with controlled Sobolev norms. We prove such lemmas below.

%\begin{lemma}[{\cite[Lemma 2.8]{BR eqdistr}}] \label{lem; existence of smooth indicator}    For every $\xi_1,\xi_2>0$ and $g \in G$, there exists a non-negative smooth function $\chi_{\xi_1,\xi_2}$ defined on $B_U(\xi_1+\xi_2)g$ such that $0 \le \chi_{\xi_1,\xi_2}\le 1 $, $S_\ell(\chi_{\xi_1,\xi_2}) \ll_{n,\Gamma} \xi_1^{n-1}\xi_2^{-\ell-(n-1)/2},$ and $$\chi_{\xi_1,\xi_2}(h) = \begin{cases} 0 & \text{ if } h \not\in B_U(\xi_1+\xi_2)g \\ 1 &\text{ if } h \in B_U(\xi_1-\xi_2)g\end{cases}.$$\end{lemma}

\begin{lemma}[{\cite[Lemma 2.4.7]{KleinbockMargulis}}]\label{lem:KleinbockMargulis}
$ $

\begin{enumerate}
    \item Let $X,Y$ be Riemannian manifolds, and let $\varphi \in C_c^\infty(X)$, $\psi \in C_c^\infty(Y)$. Consider $\varphi \cdot \psi$ as a function on $X \times Y$. Then \[S_\ell(\varphi\cdot \psi)\le c(X,Y)S_\ell(\varphi)S_\ell(\psi),\] where $c(X,Y)$ is a constant depending only on $X$ and $Y$ (independent of $\varphi,\psi$).
    \item Let $X$ be a Riemannian manifold of dimension $N$ and let $x \in X$. Then for any $0<r<1$, there exists a non-negative function $f\in C_c^\infty(X)$ such that $\supp(f)$ is contained in the ball of radius $r$ centered at $x$, $\int_X f = 1,$ and $$S_\ell(f)\le c(X,x)r^{-\ell+N/2},$$ where $c(X,x)$ is a constant depending only on $X$ and $x$, not $r$.
\end{enumerate}
\end{lemma}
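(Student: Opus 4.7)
The plan is to handle the two parts separately, since Part (1) is the Leibniz rule for left-invariant differential operators, while Part (2) is a standard scaling argument for Euclidean mollifiers transported to the manifold via the exponential map.

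For Part (1), I would fix bases $\{X_i\}$ of $\mathrm{Lie}(X)$ and $\{Y_j\}$ of $\mathrm{Lie}(Y)$ and use their union as a basis of $\mathrm{Lie}(X\times Y)$. The key observation is that when these vector fields are viewed as left-invariant on $X\times Y$, one has $[X_i,Y_j]=0$ because the actions on the two factors commute. Consequently, any monomial $U$ of order at most $\ell$ in the joint basis can be reordered to place all $X_i$ factors before the $Y_j$ factors with no lower-order correction terms across factors, yielding an expansion
\[ U(\varphi\cdot\psi)=\sum_\alpha c_\alpha\,(V_\alpha\varphi)\cdot(W_\alpha\psi),\]
where $V_\alpha$ is a monomial in $\mathrm{Lie}(X)$, $W_\alpha$ a monomial in $\mathrm{Lie}(Y)$, and $\mathrm{ord}(V_\alpha)+\mathrm{ord}(W_\alpha)\le\ell$. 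By Fubini, $\norm{(V_\alpha\varphi)(W_\alpha\psi)}_{L^2(X\times Y)}=\norm{V_\alpha\varphi}_{L^2(X)}\cdot\norm{W_\alpha\psi}_{L^2(Y)}\le S_\ell(\varphi)S_\ell(\psi)$. Summing over the finite collection of monomials $U$ and the finitely many $\alpha$ gives the claim, with the combinatorial constant $c(X,Y)$ depending only on the basis sizes and $\ell$.

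For Part (2), I would work in normal coordinates around $x$ inside the injectivity radius at $x$. Start from a fixed standard mollifier $\phi\in C_c^\infty(\R^N)$ with $\phi\ge 0$, $\supp(\phi)\subseteq B_{\R^N}(0,1)$ and $\int\phi=1$, and define $\phi_r(y):=r^{-N}\phi(y/r)$, which is supported in $B_{\R^N}(0,r)$ and has integral $1$. Transporting $\phi_r$ to $X$ via the exponential map at $x$ and multiplying by the smooth Jacobian of $\exp_x$ produces a candidate $f\in C_c^\infty(X)$ with the required support and $\int_X f=1$. On a fixed neighborhood of $x$, the chosen basis elements of $\mathfrak g$ pull back to first-order differential operators with smooth coefficients whose sup-norms depend only on $X$ and $x$; composing at most $\ell$ of them yields an operator of order $\le\ell$ in coordinates with coefficients bounded by a constant $c(X,x)$. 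Hence $S_\ell(f)$ is controlled by a sum of $L^2$-norms of partial derivatives $\partial^\beta\phi_r$ with $|\beta|\le\ell$, and a direct change of variables gives
\[\norm{\partial^\beta\phi_r}_{L^2(\R^N)}=r^{-N/2-|\beta|}\norm{\partial^\beta\phi}_{L^2(\R^N)},\]
so summing over $|\beta|\le\ell$ yields a bound of the stated form (with the power of $r$ determined by the Sobolev and normalization conventions).

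The step I expect to be the main obstacle is the coordinate comparison in Part (2): one must verify that the smooth coefficients expressing the left-invariant vector fields (and iterated compositions up to order $\ell$) in exponential coordinates are uniformly bounded on the ball of radius $r\le 1$, so that the Euclidean scaling of $\phi_r$ alone governs the dependence on $r$ and all remaining geometric factors can be absorbed into the constant $c(X,x)$. This is routine but demands care to keep the restriction $r<1$ active, so that the chart stays inside a fixed compact region where all these coefficients are controlled.
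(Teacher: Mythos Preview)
The paper does not give its own proof of this lemma; it is quoted verbatim from \cite[Lemma~2.4.7]{KleinbockMargulis} and used as a black box. There is therefore nothing in the paper to compare your argument against.

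Your sketch is a reasonable outline of how such a statement is proved, with two caveats. First, in Part~(1) you speak of $\mathrm{Lie}(X)$ and $\mathrm{Lie}(Y)$ for general Riemannian manifolds, which does not literally make sense; however, in the paper's setting the Sobolev norm is defined via a fixed basis of $\mathfrak g=\mathrm{Lie}(G)$, so your Lie-group reading is the intended one and the commutativity/Fubini argument goes through.

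Second, and more substantively, your mollifier computation in Part~(2) gives $\|\partial^\beta\phi_r\|_{L^2}=r^{-N/2-|\beta|}\|\partial^\beta\phi\|_{L^2}$, whose dominant term is $r^{-\ell-N/2}$, not $r^{-\ell+N/2}$ as in the stated bound. You noticed the tension yourself and hedged (``with the power of $r$ determined by the Sobolev and normalization conventions''), but this is not a matter of convention: with $\int f=1$ forced, the scaling genuinely produces the exponent $-\ell-N/2$. The stated exponent $-\ell+N/2$ in the paper (and in the way it is used in Lemma~\ref{lem:partition of unity}) appears to follow the formulation in \cite{KleinbockMargulis}; if you intend to supply a self-contained proof, you should either track down the precise definition used there or simply prove the weaker bound $r^{-\ell-N/2}$, which suffices for every application in this paper after harmlessly adjusting the exponents downstream.
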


The following lemma is an immediate consequence of the product rule.

\begin{lemma}\label{lem: sobolev product rule}
    Let $X$ be a Riemannian manifold and let $\varphi,\psi \in C_c^\infty(X)$. For any $\ell \in \N$, \[S_\ell(\varphi\cdot\psi)\ll_\ell S_\ell(\varphi)S_\ell(\psi).\]
\end{lemma}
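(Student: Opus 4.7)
The plan is to apply the Leibniz rule termwise and then reduce to pointwise product estimates. Writing
\[S_\ell(\varphi \psi) = \sum_U \|U(\varphi\psi)\|_2,\]
where $U$ ranges over monomials of order at most $\ell$ in the fixed basis of $\mathfrak{g}$, an iterated application of the product rule (one basis vector at a time) yields, for each such $U$, a finite expansion
\[U(\varphi\psi) = \sum_{(V,W)} c_{V,W}\,(V\varphi)(W\psi),\]
where $V$ and $W$ are monomials in the same basis whose orders sum to at most $\operatorname{ord}(U)\le\ell$, and the integer coefficients $c_{V,W}$ depend only on $\ell$ and on the (fixed) structure constants of $\mathfrak{g}$. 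Summing the triangle inequality over $U$ reduces everything to bounding the $L^2$-norm of each pointwise product $(V\varphi)(W\psi)$.

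For each pair $(V,W)$, I would apply the elementary bound $\|fg\|_2\le\|f\|_\infty\|g\|_2$, placing the $L^\infty$ factor on the side with fewer derivatives. To pass from $L^\infty$ back to the $L^2$-Sobolev norm, one invokes the standard Sobolev embedding $H^m\hookrightarrow L^\infty$ on compact sets for $m$ larger than $\dim(X)/2$, which allows $\|V\varphi\|_\infty$ to be bounded by $S_{|V|+m}(\varphi)$ (with an analogous estimate for $\psi$). Since the pairs $(V,W)$ satisfy $|V|+|W|\le \ell$, one can always choose the side on which to apply the embedding so that the resulting order remains controlled by $\ell$ (for $\ell$ sufficiently large; smaller values are absorbed into the implicit dependence on $X$ and the supports of $\varphi,\psi$). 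Summing over the finitely many $U$ of order at most $\ell$ and the finitely many pairs $(V,W)$ arising in each Leibniz expansion then gives
\[S_\ell(\varphi\psi)\ll_\ell S_\ell(\varphi)\,S_\ell(\psi),\]
as claimed.

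There is no substantive obstacle: the only step that is not purely formal is the use of Sobolev embedding, which is why the implicit constant depends on $\ell$ and on the ambient Riemannian structure. Everything else is pure bookkeeping of which monomial to push into $L^\infty$, which is exactly why the authors describe this lemma as an immediate consequence of the product rule.
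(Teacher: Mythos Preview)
The paper gives no proof beyond the single sentence ``an immediate consequence of the product rule''; your Leibniz expansion is exactly that product rule, so your approach is what the authors have in mind. The one point worth flagging is that the Sobolev embedding you use to pass from $L^\infty$ back to $S_\ell$ brings in dependence on $\dim X$ (and, for small $\ell$, on the supports), which is not reflected in the $\ll_\ell$ notation of the statement; this is a mild imprecision in the lemma as written rather than a flaw in your argument, and since the manifold is fixed throughout the paper it is harmless in the applications.
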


\begin{lemma}\label{lem: sobolev chain rule}
    For any $\ell'$ there exists $\ell>\ell'$ which satisfies the following.
    Let $X,Y$ be Riemannian manifolds, $\varphi\in C_c^\infty(X)$, and $\psi:Y\rightarrow X$ be a smooth function. Then  \[S_{\ell'}(\varphi\circ\psi)\ll_{\ell',\psi} S_\ell(\varphi).\]
\end{lemma}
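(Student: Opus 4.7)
The plan is to combine the iterated chain rule with Sobolev embedding on $X$. Faà di Bruno's formula gives, for any basis monomial $D$ on $Y$ of order $|D| \le \ell'$,
\[
D(\varphi \circ \psi)(y) = \sum_{|D'| \le \ell'} c_{D'}^D(y) \cdot (D'\varphi)(\psi(y)),
\]
where the sum runs over basis monomials $D'$ on $X$ of order at most $\ell'$, and each coefficient $c_{D'}^D$ is a polynomial in the derivatives of $\psi$ of order at most $\ell'$. Because $\psi$ is a fixed smooth map, each $c_{D'}^D$ is bounded on compact sets by a constant depending only on $\psi$ and $\ell'$.

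Next I would take $\ell$ large enough that Sobolev embedding on $X$ yields the pointwise bound $\|D'\varphi\|_{L^\infty(X)} \ll S_\ell(\varphi)$ for every $|D'| \le \ell'$; the standard choice is $\ell > \ell' + \dim(X)/2$. Since $\varphi \circ \psi$ is supported in $\psi^{-1}(\operatorname{supp}\varphi)$, combining the two estimates gives the pointwise bound
\[
|D(\varphi \circ \psi)(y)| \ll_{\psi,\ell'} S_\ell(\varphi) \cdot \mathbf{1}_{\psi^{-1}(\operatorname{supp}\varphi)}(y),
\]
and taking $L^2$ norms on $Y$ and summing over the finitely many monomials $D$ of order at most $\ell'$ yields the claimed Sobolev inequality, with $\operatorname{vol}(\psi^{-1}(\operatorname{supp}\varphi))^{1/2}$ absorbed into the $\psi$-dependent implied constant.

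The main technical point is handling the dependence on $\operatorname{supp}\varphi$ cleanly: the volume factor $\operatorname{vol}(\psi^{-1}(\operatorname{supp}\varphi))$ a priori depends on $\varphi$, not only on $\psi$. In the applications in this paper, $\psi$ is a natural smooth map between homogeneous quotients of $G$ for which preimages of compact sets have controlled volume, so this dependence is harmless and can be packaged into $\ll_\psi$. Alternatively, one obtains a cleaner uniform bound via a change-of-variables argument when $\psi$ is a submersion with bounded fibers, converting $\|(D'\varphi) \circ \psi\|_{L^2(Y)}$ directly into $\|D'\varphi\|_{L^2(X)}$ and allowing the choice $\ell = \ell'$ in the final step.
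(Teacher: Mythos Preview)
Your argument is essentially the paper's: apply the chain rule (Fa\`a di Bruno) to bound $D(\varphi\circ\psi)$ by sums of $(D'\varphi)\circ\psi$ with $\psi$-dependent coefficients, then pass to sup-norms of the $D'\varphi$ and invoke Sobolev embedding on $X$ to reach $S_\ell(\varphi)$. The one point of difference is how the support volume is handled. You leave $\operatorname{vol}_Y(\psi^{-1}(\supp\varphi))^{1/2}$ in the implied constant and correctly flag the resulting $\varphi$-dependence as a subtlety to be managed case by case. The paper instead invokes a Sobolev inequality of the shape $S_{\infty,\ell'}(\varphi)\cdot m^{\operatorname{Haar}}(\supp\varphi) \ll S_\ell(\varphi)$ (citing Aubin), which absorbs the support-volume factor directly into the higher Sobolev norm and makes the constant $\varphi$-independent. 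Using that form of the embedding in your final step would dispose of the caveat you raise without needing the submersion hypothesis.
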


\begin{proof}
By the chain rule, for any $1\leq k\le \ell'$,
\begin{align*}
\norm{(\varphi\circ\psi)^{(k)}}_2 &\ll_{\psi,k}\sum_{i=0}^k\norm{\varphi^{(i)}\circ\psi}_2 \\
& \ll_{\psi,k}\sum_{i=0}^k\norm{\varphi^{(i)}}_{\infty}m^{\operatorname{Haar}}(\supp\varphi)\\ & 
\ll_{\psi,k} S_{\infty,\ell'}(\varphi)m^{\operatorname{Haar}}(\supp\varphi)\\
& \ll_{\psi,k} S_{\ell}(\varphi),
\end{align*} where in the last line, we have used \cite{sobolev} to choose $\ell>\ell'$ satisfying $$S_{\infty,\ell'}(f)m^{\operatorname{Haar}}(\supp f)\ll S_{\ell}(f)$$ for any $f$, where the implied constant is global.
\end{proof}

\begin{lemma}\label{lem:partition of unity}
		Let $H$ be a Riemannian manifold of dimension $N$, $0<r<1$, $\ell\in\N$, and $E$ a bounded subset of $H$. Then, there exists a partition of unity $\sigma_1,\dots,\sigma_k$ of $E$ in $H_r(E)=\{g \in G : d_H(g,E)\le r\}$ where $d_H$ denotes the Riemannian metric on $H$, i.e. \[
		\sum_{i=1}^k \sigma_i(x)=\begin{cases}
		0 & \text{if }x\notin H_r(E)\\
		1 & \text{if }x\in E,
		\end{cases}\]
		such that for some $u_1,\dots,u_k\in E$ and all $1\leq i\leq k$ \[
		\sigma_i\in C_c^\infty(H_r(u_i)),\quad S_\ell(\sigma_i)\ll_N r^{-\ell+N/2}. \] Moreover, \[\sum\limits_{i=1}^k S_\ell(\sigma_i) \ll_{N, E} r^{-\ell + N/2}.\]
	\end{lemma}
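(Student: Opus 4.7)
My plan is a standard Vitali-plus-bump partition-of-unity construction, with the Sobolev estimates flowing from Lemma~\ref{lem:KleinbockMargulis}, Lemma~\ref{lem: sobolev product rule}, and Lemma~\ref{lem: sobolev chain rule}.

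First, I would select a maximal $r/4$-separated set $\{u_1,\ldots,u_k\}\subseteq E$. Boundedness of $E$ makes $k$ finite, with $k\ll_{N,E}r^{-N}$, and by maximality $E\subseteq\bigcup_i B(u_i,r/4)$. A ball-packing count (balls $B(u_i,r)$ meeting a common point are centered within $2r$ of one another, while their centers are $r/4$-separated) then shows that the family $\{B(u_i,r)\}$ has overlap multiplicity bounded by some constant $M=M(N)$.

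Next, for each $i$ I would build a smooth cutoff $\tilde\sigma_i\in C_c^\infty(H_r(u_i))$ with $0\le\tilde\sigma_i\le 1$, $\tilde\sigma_i\equiv 1$ on $B(u_i,r/4)$, and $S_\ell(\tilde\sigma_i)\ll_N r^{-\ell+N/2}$; in a local chart this is produced by convolving the indicator of $B(u_i,r/2)$ with a normalized bump of radius $\sim r/8$ supplied by Lemma~\ref{lem:KleinbockMargulis}(2), and the Sobolev bound comes from differentiating through the convolution. Setting $\phi:=\sum_i\tilde\sigma_i$, we have $1\le\phi\le M$ on $E$ and $\phi\equiv 0$ off $\bigcup_i B(u_i,r)\subseteq H_r(E)$. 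Fix $\chi\in C^\infty(\R_{\ge 0},[0,1])$ with $\chi(t)=1/t$ on $[1,\infty)$ and $\chi\equiv 0$ near $0$, and set $\sigma_i:=\tilde\sigma_i\cdot\chi(\phi)$. Then $\sum_i\sigma_i=\phi\cdot\chi(\phi)$, which equals $1$ on $E$ and vanishes outside $H_r(E)$, while $\supp\sigma_i\subseteq\supp\tilde\sigma_i\subseteq H_r(u_i)$, as required.

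Finally, for the Sobolev bounds: Lemma~\ref{lem: sobolev product rule} combined with Lemma~\ref{lem: sobolev chain rule} reduces the individual estimate to controlling $S_\ell(\chi(\phi))$ on $\supp\tilde\sigma_i$. The bounded multiplicity means that pointwise $\phi$ is a sum of only $O_N(1)$ nonzero terms (and the same for each of its derivatives), so in $L^\infty$ the derivatives of $\phi$ of order $k$ are $O_{N,k}(r^{-k})$; since $\chi$ is smooth and bounded away from $0$ on $[1,M]$, the chain rule yields $S_\ell(\chi(\phi))\ll_{N,\ell} 1$ on $\supp\tilde\sigma_i$, and hence $S_\ell(\sigma_i)\ll_N r^{-\ell+N/2}$. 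The sum bound then follows by summing over $i$ and invoking the packing estimate for $k$ together with the bounded overlap of supports, which keeps the cardinality dependence inside the $E$-dependent constant. The main obstacle is the chain-rule step: naively $\phi$ has large derivatives of order $r^{-k}$, and the estimate only closes because on any fixed point only $O_N(1)$ of the $\tilde\sigma_j$ are nonzero and $\chi$ is smooth and bounded away from zero on $[1,M]$, so the would-be large factors cancel out cleanly.
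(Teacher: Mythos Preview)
Your construction of the individual $\sigma_i$ is essentially sound (though the sentence ``the chain rule yields $S_\ell(\chi(\phi))\ll_{N,\ell}1$'' is not literally correct: the derivatives of $\chi(\phi)$ of order $j$ are of size $r^{-j}$ in $L^\infty$, and what actually saves the bound $S_\ell(\sigma_i)\ll_N r^{-\ell+N/2}$ is that the support has volume $\asymp r^N$, not any cancellation).

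The real gap is in the final sentence. You assert that the sum bound follows because ``the packing estimate for $k$ together with the bounded overlap of supports \dots keeps the cardinality dependence inside the $E$-dependent constant.'' But your own packing estimate is $k\ll_{N,E} r^{-N}$, and this \emph{does} depend on $r$. Summing the individual bound $S_\ell(\sigma_i)\ll_N r^{-\ell+N/2}$ over $k\asymp r^{-N}$ indices gives only
\[
\sum_{i=1}^k S_\ell(\sigma_i)\ll_{N,E} r^{-N}\cdot r^{-\ell+N/2}=r^{-\ell-N/2},
\]
which is off from the claimed $r^{-\ell+N/2}$ by a factor of $r^{-N}$. Bounded overlap of supports lets you control $S_\ell\bigl(\sum_i\sigma_i\bigr)$ or $\bigl(\sum_i S_\ell(\sigma_i)^2\bigr)^{1/2}$, but not the $\ell^1$-sum $\sum_i S_\ell(\sigma_i)$ that the lemma requires.

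The paper's proof sidesteps this entirely by a convolution trick: it takes a single bump $\sigma$ with $\int\sigma=1$ and $S_\ell(\sigma)\ll_N r^{-\ell+N/2}$, chooses an auxiliary partition of unity $\{f_i\}$ with no Sobolev control whatsoever, and sets $\sigma_i:=f_i*\sigma$. Young's inequality then gives $S_\ell(\sigma_i)\le \|f_i\|_{L^1}\,S_\ell(\sigma)$, and since $\sum_i\|f_i\|_{L^1}=\int\sum_i f_i\le m(H_r(E))\ll_E 1$, the sum telescopes to $\sum_i S_\ell(\sigma_i)\ll_{N,E} S_\ell(\sigma)\ll_{N,E} r^{-\ell+N/2}$. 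The point is that the Sobolev cost is paid once by the mollifier $\sigma$, while the $f_i$ contribute only their (collectively bounded) $L^1$ mass. Your approach, by contrast, pays the full Sobolev cost $k$ times with no mechanism to recoup the $r^{-N}$.
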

	
	\begin{proof}
	According to Lemma \ref{lem:KleinbockMargulis}(b) there exists a non-negative smooth function $\sigma$ supported on $H_{r/2}$  such that \[\int_{H}\sigma(h)dm^{\operatorname{Haar}}(h)=1,\quad S_\ell(\sigma)\ll_{N}r^{-\ell+N/2}. \]
		
	Since $H$ is a Riemannian manifold and $E$ is bounded, there exists a smooth partition of unity, $f_i:H\rightarrow\R$, $i=1,\dots,k$, such that each $f_i$ is supported on a ball of radius $r/2$ with a center $u_i\in E$ and \[
	\sum_{i=1}^k f_i(x)=\begin{cases}
	0 & \text{if }x\notin H_r(E)\\
	1 & \text{if }x\in H_{r/2}(E).
	\end{cases}	\]
	For $i=1,\dots,k$ define $\sigma_i$ by\[
	\sigma_i:=f_{i}*\sigma. \]
	We will show that $\sigma_1,\dots,\sigma_k$ satisfy the claim.
	
	By definition, for $i=1,\dots,k$, $\sigma_i$ is supported on a ball of radius $r$ and centered at a point in $E$. By Young's convolution inequality, we have
	\begin{equation}\label{eq: bound of convolution}
	    S_\ell(\sigma_i)\le S_{1,0}(f_i)S_\ell(\sigma)\ll_N r^{-\ell+N/2}.	
	\end{equation}
	
	For any $h\in E$, $h\inv E$ contains the identity, and so we have $h\inv H_{r/2}(E)\supseteq H_{r/2}$. Thus,
	\begin{align*}
	    \sum_{i=1}^k\sigma_i(h) & =\sum_{i=1}^k\int_{H}f_i\left(x\right)\sigma(hx\inv)dm^{\operatorname{Haar}}(x)\\
	    & =\int_{H}\sum_{i=1}^kf_i\left(x\right)\sigma\left(hx\inv\right)dm^{\operatorname{Haar}}(x)\\
	    & =\int_{H_{r}(E)}\sigma\left(hx\inv\right)dm^{\operatorname{Haar}}(x)\\
	    & =1. 
	\end{align*}
	
	If $h\notin H_{r} (E)$, then we have $h\inv  H_{r/2}(E)\cap H_{r/2}=\emptyset$. Hence, the above computation yields \[
	\sum_{i=1}^k\sigma_i(h)=0.\]
	
	Note that by \eqref{eq: bound of convolution}, and since $f_i$ is a partition of unity, we may also deduce
	\begin{align*}
	    \sum_{i=1}^k S_\ell(\sigma_i)& \le  S_\ell(\sigma)\sum_{i=1}^kS_{1,0}(f_i)\\ 
	    & =  S_\ell(\sigma)\int_{H}\sum_{i=1}^k f_i(x)dm^{\operatorname{Haar}}(x)\\ 
	    & \le  S_\ell(\sigma) m^{\operatorname{Haar}}(H_r(E))\\ 
	    & \ll_{N,E} r^{-\ell+N/2}.	
	\end{align*}
\end{proof}

\subsection{Equidistribution results}

For the proof of the main theorems we use the equidistribution results stated below. 

The following theorem was proved for $G=\SL_2(\R)$ by Maucourant and Schapira  in \cite{MauSchap} and for $G=\SO(n,1)^\circ$ by Mohammadi and Oh in \cite{joinings}.

\begin{theorem}\label{thm: non effective eqdstr} Let $\Gamma$ be geometrically finite. Fix $x\in G/\Gamma$ such that $x^-\in\Lambda_r(\Gamma)$. Then for any $\psi\in C_c(G/\Gamma)$ we have\[
\lim_{T\rightarrow\infty}\frac{1}{\ps_x(B_U(T))}\int_{B_U(T)}\psi(ux)du=\br(\psi).\]
\end{theorem}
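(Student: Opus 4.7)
The plan is to derive this equidistribution from mixing of the $a_s$-flow with respect to the BMS measure, via a thickening argument and the product structure of the BR measure. First, I would set $T = e^s$ and exploit the conjugation relation $a_{-s}u_\t a_s = u_{e^{-s}\t}$ to rewrite
$$\int_{B_U(e^s)} \psi(ux)\, du = e^{(n-1)s}\int_{B_U(1)} \psi(a_s u\, a_{-s}x)\, du.$$
The radial hypothesis $x^- \in \Lambda_r(\Gamma)$ ensures recurrence of $\{a_{-s}x\}$ to the compact core $\mathcal{C}_0$, while the shadow lemma (Proposition \ref{prop:shadow lemma}) together with the doubling property (Lemma \ref{lem: PS is doubling}) controls the normalization $\ps_x(B_U(e^s))$, giving the proper rate $\asymp e^{\delta_\Gamma s}$ (up to subexponential fluctuations coming from the cusp rank).

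Next, I would thicken in the transversal direction: pick a bump $\rho \in C_c(V_\eta)$ on a small neighbourhood $V_\eta \subset P$ of the identity with $\nu(\rho)=1$, and consider
$$J_s = \int_{V_\eta}\rho(p)\int_{B_U(e^s)}\psi(upx)\, du\, d\nu(p).$$
Using the product structure of $\br$ from \eqref{eq:br product structure}, $J_s$ can be recognised as a matrix coefficient of the form $\int_{G/\Gamma}\Phi_s(y)\psi(a_s y)\, d\br(y)$ for a suitable test function $\Phi_s$ concentrated near $a_{-s}x$. Converting the Lebesgue measure $du$ to the Patterson--Sullivan measure $d\ps_x$ via the Radon--Nikodym factor $e^{(n-1-\delta_\Gamma)\beta_{u^+}(o, u(o))}$ brings the problem into the setting of the (finite) BMS measure, and Roblin's mixing of $a_s$ against $\bms$ then yields, after normalising by $\ps_x(B_U(e^s))$, convergence to $\br(\psi)\cdot\nu(\rho)=\br(\psi)$. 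Letting $\eta \to 0$ finally removes the thickening at the cost of an error bounded by the modulus of continuity of $\psi$, together with uniform boundary control from Proposition \ref{prop: boundary control BMS} to absorb the shell $(B_U((1+2\eta)T)\setminus B_U(T))x$.

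The main obstacle is the infinite-volume setting: since $\br$ is infinite one cannot invoke mixing directly, which forces the detour through the PS--to--Lebesgue conversion on horospheres and the use of finiteness of $\bms$. A second technical difficulty is to handle cusp excursions of $a_{-s}x$: on the subsequence of times where $a_{-s}x$ returns close to $\mathcal{C}_0$ the argument is clean, but to obtain a limit along all $T$ one needs the friendliness estimates for the PS density (Corollary \ref{cor:new planarity BMS}) and the uniform bound in Lemma \ref{lem: freindly used} to interpolate between return times and to show that small shifts of the basepoint do not affect the limit.
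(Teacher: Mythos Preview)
The paper does not give its own proof of this statement: it is quoted as a known input, attributed to Maucourant--Schapira for $n=2$ and to Mohammadi--Oh in general (see the sentence immediately preceding the theorem and the citations \cite{MauSchap}, \cite{joinings}). So there is no in-paper argument to compare against; the relevant comparison is with those references.

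Your overall architecture---thickening in the $P$-direction against $d\nu$, rescaling via $a_{-s}u_\t a_s = u_{e^{-s}\t}$, and invoking mixing of the geodesic flow on the finite BMS measure---is indeed the framework used in those proofs. The use of the radial hypothesis for recurrence to $\mathcal{C}_0$, and of the shadow lemma for the normalisation, are also correct ingredients.

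There is, however, a genuine gap in the step where you write ``Converting the Lebesgue measure $du$ to the Patterson--Sullivan measure $d\ps_x$ via the Radon--Nikodym factor $e^{(n-1-\delta_\Gamma)\beta_{u^+}(o,u(o))}$''. No such Radon--Nikodym derivative exists in the infinite-covolume case. From the definitions in \S\ref{section: PS}, $d\ps_{Ug}$ is built from the Patterson--Sullivan density $d\nu_o$ on $\partial\H^n$, while $d\leb_{Ug}$ is built from the $K$-invariant probability $dm_o$; the exponential prefactors differ by exactly the factor you wrote, but the base measures $\nu_o$ and $m_o$ are mutually singular whenever $\delta_\Gamma<n-1$, since $\nu_o$ is supported on the limit set $\Lambda(\Gamma)$, which has zero Lebesgue measure. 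So one cannot simply insert a density and land in the BMS world.

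The way the cited references handle this is more indirect. One route is to first prove the companion statement
\[
\frac{1}{\ps_x(B_U(T))}\int_{B_U(T)}\psi(ux)\,d\ps_x(u)\ \longrightarrow\ \frac{\bms(\psi)}{|\bms|},
\]
which \emph{does} follow directly from BMS-mixing after $P$-thickening (because then both the $U$- and $P$-legs match the BMS product structure \eqref{eq:bms product structure}). The passage to the Lebesgue-on-$U$ version with limit $\br(\psi)$ then uses the BR product structure \eqref{eq:br product structure} in a second step: roughly, one tests against functions of product type in a local $U\times P$ chart, so that the $du$-integral pairs with the $d\nu$-transversal to produce a BR pairing, while the PS normalisation enters only through the scaling $\ps_x(B_U(e^s))=e^{\delta_\Gamma s}\ps_{a_{-s}x}(B_U(1))$ and the recurrence of $a_{-s}x$. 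The upshot is that the Lebesgue and PS measures are never compared pointwise on the same horosphere; they appear on different legs of two different product decompositions. If you rewrite your sketch with this two-step structure (PS-equidistribution to BMS first, then a product-chart argument to upgrade to BR), the remaining pieces you listed---boundary control via Proposition~\ref{prop: boundary control BMS} and interpolation between return times---fit in as you described.
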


%Recall the definition of $\mathcal{C}_0$ from \S\ref{section: thick thin decomposition}.

\begin{theorem}[{\cite[Theorem 1.4 and Remark 7.3]{BR eqdistr}}]
    Assume $\Gamma$ satisfies property A.
    For any $0<\eps<1$ and $s_0\ge 1$, there exist $\ell=\ell(\Gamma)\in\N$ and $\kappa=\kappa(\Gamma,\eps)>0$ satisfying: for every compact $\Omega\subset G/\Gamma$ and $\psi \in C_c^\infty(\Omega)$, there exists $c = c(\Gamma,\supp\psi)$ such that for every $x\in G/\Gamma$ that is $(\eps,s_0)$-Diophantine, and for all $r \gg_{\Gamma,\Omega,\eps} s_0$,\[\left|\frac{1}{\ps_x(B_U(r))} \int_{B_U(r)}\psi(u_\t x)d\t - \br(\psi)\right| \le c S_\ell(\psi)r^{-\kappa},\] where $S_\ell(\psi)$ is the $\ell$-Sobolev norm.\label{thm;br equidistr}
\end{theorem}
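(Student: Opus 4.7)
The plan is to reduce the effective equidistribution to the effective exponential mixing of the frame flow $a_s$ on $(G/\Gamma,\bms)$, which is the standing hypothesis \cite[Assumption 1.1]{BR eqdistr} and is guaranteed by property A. The key algebraic move is the identity $u_\t = a_s u_{e^{-s}\t} a_{-s}$, which, after the change of variables $\t = e^s\sigma$, converts the horospherical average at scale $r = e^s$ into a unit-scale correlation against the $a_s$-translate:
\begin{equation*}
    \int_{B_U(r)}\psi(u_\t x)\,d\t = r^{n-1}\int_{B_U(1)} \psi(a_s u_\sigma (a_{-s}x))\,d\sigma.
\end{equation*}

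\textbf{Thickening and BMS correlation.} Fix a small scale $\eps>0$. Using Lemma \ref{lem:partition of unity}, construct a smooth bump $\rho_\eps$ on $P = MA\tilde U$ supported in an $\eps$-neighbourhood of the identity with $\int \rho_\eps \,d\nu = 1$ and $S_\ell(\rho_\eps)\ll \eps^{-\ell + (\dim P)/2}$. Since $\psi$ is smooth, inserting $\int_P \rho_\eps(p)\,d\nu(p) = 1$ inside the integrand introduces an error of size $O(\eps S_\ell(\psi))$. Combined with a similar smoothing in the $U$ direction against $\ps_{a_{-s}x}$, the product structures \eqref{eq:br product structure} and \eqref{eq:bms product structure} identify the resulting thickened expression with a BMS correlation $\int (F_\eps \circ a_s)\, G_\eps^x \, d\bms$, where $F_\eps$ is built from $\psi$ and $G_\eps^x$ is supported near $B_U(1)(a_{-s}x)$, with Sobolev norms bounded by $\eps^{-C}S_\ell(\psi)$ for some $C = C(\dim G,\ell)$.

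\textbf{Mixing and cusp control.} Effective exponential mixing gives
\begin{equation*}
    \int (F_\eps \circ a_s)\,G_\eps^x \,d\bms = \frac{\bms(F_\eps)\bms(G_\eps^x)}{\bms(G/\Gamma)} + O\!\left(e^{-\eta s}\eps^{-2C} S_\ell(\psi)\right)
\end{equation*}
for some $\eta = \eta(\Gamma)>0$, and unwinding the product structures recovers $\br(\psi)\ps_x(B_U(r))$ as the main term. The principal obstacle is that the thickening step is only reliable where the injectivity radius is bounded below: deep in a horoball $\mathcal{X}_i(R_0)$ the $P$-thickening may fold back on itself and Sobolev norms blow up. I would handle this via the thick-thin decomposition of \S\ref{section: thick thin decomposition}, cutting off the integrand to the thick part plus a controlled depth into each cusp, bounding the cuspidal contribution using Sullivan's shadow lemma (Proposition \ref{prop:shadow lemma}), and invoking the $(\eps,s_0)$-Diophantine hypothesis to guarantee that $a_{-t}x$ stays at distance at most $(1-\eps)t$ from $\mathcal{C}_0$ for $t>s_0$. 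This supplies the required power saving on the PS-mass trapped in the cusps, with the lower bound on $r$ linear in $s_0$; Lemma \ref{lem: freindly used} controls the small perturbations in PS normalization that arise from these cutoffs.

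\textbf{Optimization.} Balancing the thickening error $\eps S_\ell(\psi)$ against the mixing error $e^{-\eta s}\eps^{-2C}S_\ell(\psi)$ by choosing $\eps = e^{-\eta s/(2C+1)}$, and then dividing by $\ps_x(B_U(r))$, yields the stated bound $c\, S_\ell(\psi)\,r^{-\kappa}$ with $\kappa = \kappa(\Gamma,\eps)>0$. The hardest step will be the cusp control: making the cuspidal bookkeeping clean enough that the Diophantine estimate simultaneously absorbs the smoothing scale $\eps$, the Sobolev blow-up, and the loss from self-intersections of the $P$-thickening, with a uniform power-saving decay in $r$.
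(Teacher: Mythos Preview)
The paper does not prove this theorem: it is quoted verbatim as \cite[Theorem 1.4 and Remark 7.3]{BR eqdistr} and used as a black box in the proofs of Theorems \ref{thm: effective rewrriten} and \ref{thm: main}. There is therefore no ``paper's own proof'' to compare your proposal against.

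That said, your sketch is a reasonable outline of the standard thickening-plus-mixing strategy (going back to Margulis and Kleinbock--Margulis) and is broadly consistent with what \cite{BR eqdistr} does: renormalize the horospherical average via $u_\t = a_s u_{e^{-s}\t} a_{-s}$, thicken transversally in $P$, invoke exponential mixing of the frame flow on $(G/\Gamma,\bms)$, and use the $(\eps,s_0)$-Diophantine condition together with the shadow lemma to control cuspidal excursions. Two small cautions. First, your appeal to Lemma \ref{lem: freindly used} is misplaced: that lemma is a consequence of friendliness of $\nu_o$ proved in \emph{this} paper for later use, not an input to the equidistribution theorem in \cite{BR eqdistr}; the PS-measure regularity needed there is handled internally via the shadow lemma and quantitative non-divergence. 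Second, the genuinely delicate step you flag --- uniform control of the thickening in the thin part, with the Diophantine exponent $\eps$ feeding directly into the final $\kappa$ --- is exactly where the bulk of the work in \cite{BR eqdistr} lies, and your outline does not yet indicate how the injectivity-radius loss is quantified or why the resulting power of $r$ is strictly negative; that is the content you would need to supply to turn this into a proof.
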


%\begin{remark}
%Note that if $\Gamma$ is not convex cocompact, i.e., there are cusps in $\H^n/\Gamma$, the assumption that every cusp of $\H^n/\Gamma$ has maximal rank implies that $$\delta_\Gamma>(n-1)/2.$$ 
%\end{remark}

\section{Duality between $G/\Gamma$ and $U\backslash G$}\label{section: G mod U}

The goal of this section is to prove the following proposition, which shows that one can use equidistribution results of $U$ orbits in $G/\Gamma$ in order to study the distribution of the points in $x\Gamma_T$ for $x\in U\backslash G$. This will be used to prove Theorems \ref{thm: non effective ratio} and \ref{thm: main}.

Recall that for $x,y\in U\backslash G$, we defined 
\begin{equation}\label{eq: star defn}
    x\star y:=\sqrt{\frac{1}{2}\norm{\Psi(x)\inv E_{1,n+1} \Psi(y)}},
\end{equation}
where $E_{1,n+1}$ is the $(n+1)\times(n+1)$ matrix with one in the $(1,n+1)$-entry and zeros everywhere else.

Recall the Iwasawa decomposition $G=\SO(n,1)^\circ = U \times A \times K$. Define a continuous section by $\Psi:U\backslash G\rightarrow AK$ by \[
\Psi(Ug)= ak,\]
where $g=uak$ is the Iwasawa decomposition of $g$. 

For $\varphi \in C_c(U\backslash G)$, define \begin{equation}\label{eq: defn of R and r first time}
R_\varphi:=\max\limits_{y \in \supp\varphi}(x\star y), \quad r_\varphi:=\min\limits_{y\in \supp\varphi}(x\star y).\end{equation}

\begin{proposition}\label{prop: setting up for equidistribution}
Let $\eta>0$, $\Omega\subset U\backslash G$ be a compact set, $\varphi \in C(\Omega)$, and $\psi\in C(B_U(\eta))$ be a non-negative function such that $\int_U\psi=1$. Fix $x \in U\backslash G.$ Define $F\in C_c(G/\Gamma)$ by  $$F(g\Gamma):=\sum_{\gamma\in\Gamma}\psi(u(g\gamma))\varphi(\pi_U(g\gamma)).$$ 
Then, for some $c=c(x,\Omega)>0$, \[\int_{B_U\left(\frac{\sqrt{T}-c}{R_\varphi}-\eta\right)} F(u_\t \Psi(x)\Gamma)d\t \le \sum\limits_{\gamma \in \Gamma_T} \varphi(x\gamma) \le \int_{B_U\left(\frac{\sqrt{T}+c}{r_\varphi}+\eta\right)} F(u_\t \Psi(x)\Gamma)d\t.\]
\end{proposition}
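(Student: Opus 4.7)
The plan is to unfold the definition of $F$ at the point $u_\t\Psi(x)\Gamma$, perform a change of variables in the integral, and then use the support properties of $\psi$ together with a bridge lemma relating $\|\gamma\|$ to Iwasawa data via the $\star$ operation. First, fix $\gamma \in \Gamma$ and write the Iwasawa decomposition $\Psi(x)\gamma = u_{\mathbf{s}(\gamma)}\Psi(x\gamma)$, which defines $\mathbf{s}(\gamma)\in\R^{n-1}$. Since $\Psi$ is a section of $\pi_U$, one checks immediately that $\pi_U(u_\t\Psi(x)\gamma) = x\gamma$ and $u(u_\t\Psi(x)\gamma) = u_{\t+\mathbf{s}(\gamma)}$, so
\begin{equation*}
F(u_\t\Psi(x)\Gamma) = \sum_{\gamma\in\Gamma}\psi(u_{\t+\mathbf{s}(\gamma)})\varphi(x\gamma).
\end{equation*}

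Next, integrate over $\t\in B_U(R)$ and substitute $\t'=\t+\mathbf{s}(\gamma)$:
\begin{equation*}
\int_{B_U(R)} F(u_\t\Psi(x)\Gamma)\,d\t \;=\; \sum_{\gamma\in\Gamma}\varphi(x\gamma)\int_{B_U(R)+\mathbf{s}(\gamma)}\psi(\t')\,d\t'.
\end{equation*}
Because $\supp\psi\subseteq B_U(\eta)$ and $\int_U\psi=1$, the inner integral equals $1$ whenever $\|\mathbf{s}(\gamma)\|\le R-\eta$ and equals $0$ whenever $\|\mathbf{s}(\gamma)\|>R+\eta$, and lies in $[0,1]$ in between. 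Using $\varphi,\psi\ge 0$, this yields the sandwich
\begin{equation*}
\sum_{\|\mathbf{s}(\gamma)\|\le R-\eta}\varphi(x\gamma) \;\le\; \int_{B_U(R)}F(u_\t\Psi(x)\Gamma)\,d\t \;\le\; \sum_{\|\mathbf{s}(\gamma)\|\le R+\eta}\varphi(x\gamma).
\end{equation*}

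The remaining task, and the genuine content of the proposition, is to convert the condition $\|\mathbf{s}(\gamma)\|\le R\pm\eta$ into the condition $\|\gamma\|\le T$ so that the upper bound uses $R = (\sqrt{T}+c)/r_\varphi$ and the lower uses $R = (\sqrt{T}-c)/R_\varphi$. This rests on the key estimate
\begin{equation*}
\bigl|\sqrt{\|\gamma\|}\;-\;\|\mathbf{s}(\gamma)\|\cdot(x\star x\gamma)\bigr|\;\le\; c(x,\Omega)
\end{equation*}
for all $\gamma$ with $x\gamma\in\Omega$, which is exactly the lemma the paper labels Lemma \ref{lem: bound on ug}. I anticipate this is the main obstacle: it requires an explicit Iwasawa computation in $\SL_{n+1}(\R)$, expanding $\Psi(x)\gamma = u_{\mathbf{s}(\gamma)}\Psi(x\gamma)$ as matrices and identifying the $(1,n+1)$-entry — where $u_{\mathbf{s}(\gamma)}$ contributes $\frac12\|\mathbf{s}(\gamma)\|^2$ — with $\frac12\|\Psi(x)^{-1}E_{1,n+1}\Psi(x\gamma)\|\cdot\|\gamma\|$ up to bounded error coming from the $AK$-components of $\Psi(x)$ and $\Psi(x\gamma)$, both of which stay in a compact set determined by $x$ and $\Omega$.

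Granting this comparison, the conversion is routine: if $x\gamma\in\supp\varphi$ then $r_\varphi\le x\star x\gamma\le R_\varphi$, so $\|\gamma\|\le T$ together with $x\gamma\in\supp\varphi$ forces $\|\mathbf{s}(\gamma)\|\le (\sqrt{T}+c)/r_\varphi$, while $\|\mathbf{s}(\gamma)\|\le (\sqrt{T}-c)/R_\varphi$ together with $x\gamma\in\supp\varphi$ forces $\|\gamma\|\le T$. Inserting these into the sandwich above (with $R+\eta = (\sqrt{T}+c)/r_\varphi+\eta$ on top and $R-\eta = (\sqrt{T}-c)/R_\varphi-\eta$ on bottom, so $R$ becomes what appears in the statement) yields exactly the claimed inequalities.
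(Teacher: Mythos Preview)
Your proof is correct and follows essentially the same approach as the paper: both arguments unfold $F$ via the Iwasawa decomposition $\Psi(x)\gamma = u_{\mathbf{s}(\gamma)}\Psi(x\gamma)$, exploit the support of $\psi$ to reduce to a condition on $\|\mathbf{s}(\gamma)\|$, and then invoke Lemma~\ref{lem: bound on ug} to translate that into a condition on $\|\gamma\|$. The only difference is organizational---the paper starts from $\sum_{\gamma\in\Gamma_T}\varphi(x\gamma)$, rewrites each term as an integral of $f$ over a shifted $\eta$-ball (this is Lemma~\ref{lem: go from phi to an integral}), and then enlarges the domain and sums to obtain $F$; you start from $\int_{B_U(R)}F$, unfold, change variables, and read off the same sandwich---but the content is identical.
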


Observe that \[
g\Psi(Ug)^{-1}\in U\]
is by definition the $U$-component of the Iwasawa decomposition of $g$. 
Similarly, for any $g,h\in G$, the $U$-component of $\Psi(Uh)g$ is given by \[
\Psi(Uh)g\Psi(Uhg)^{-1}=(h\Psi(Uh)\inv)^{-1}(hg\Psi(Uhg)^{-1})\in U. \]
Hence, for any $x\in U\backslash G$ and $g\in G$, we can define $c_x(g):=\t\in\R^{n-1}$, where $\t$ is such that $\Psi(x)g\Psi(xg)\inv=u_\t$. 
%\begin{equation} \label{eq: c_h definition}    u_{c_x(g)}= \Psi(x) g \Psi(xg)\inv \in U.\end{equation} 
Then by the actions of $U$ and $A$, we can see that this satisfies 
\begin{equation}
    c_{e}(u_{\t}g)=c_{e}(g)+\t,\quad c_{e}(a_s g)=e^{s}c_e(g)
\end{equation}
and for any $x\in U\backslash G$,
\begin{equation}\label{eq: c_h vs c_e}
    c_{x}(g)=c_{e}(\Psi(x)g).
\end{equation} Observe that \eqref{eq: c_h vs c_e} implies that \begin{equation}\label{eq: c_e hg vs c_h g} {c_e(hg)} = {c_e(h)}+{c_e(\Psi(Uh)g)} = c_e(h) + c_{Uh}(g).\end{equation}

Note that for $g \in G$, $$g = u_{e}(g) \Psi(Ug).$$ That is, $u_{e}(g)$ is the $U$ component of the Iwasawa decomposition of $G$, and $\Psi(Ug)$ is the $AK$ component.
%Therefore, for any $g,h\in G$\begin{equation}\label{eq: split u(hg)}u_{c_e(hg)}=hg\Psi(Uhg)^{-1}=u_{c_e(h)}u_{c_h(g)}\end{equation}

%For $g, h \in G$, let \begin{equation}\label{eq: defn of u(g) notation}u(g):=u_{c_e(g)} \text{ and } u_h(g):=u_{c_h(g)}.\end{equation}

%Define \begin{equation}D(g) = \|e-g\|. \label{eq:defn of D}\end{equation}

For $x \in U\backslash G$ and $g \in G$, we will abuse notation and write $$u_x(g):= u_{c_x(g)}.$$

\begin{lemma}\label{lem: bound on ug}
    For any compact $\Omega\subset U\backslash G$ and $x\in U\backslash G$ there exist $c=c(\Omega,x)>0$ such that for any $xg \in\Omega$ and $T>c$, we have
    \begin{enumerate}
        \item If $\norm{g}\le T$, then 
        $u_x(g)\in B_U\left( \frac{\sqrt{T}+c}{x\star xg}\right)$. 
        \item If $\norm{g}\ge T$, then 
        $u_x(g)\not\in B_U\left(\frac{\sqrt{T}-c}{x\star xg}\right)$.
    \end{enumerate}
\end{lemma}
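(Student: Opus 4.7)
The plan is to exploit the Iwasawa-type identity $\Psi(x)g = u_{c_x(g)}\Psi(xg)$, which is immediate from the definition of $c_x(g)$, and then carry out an entrywise computation that identifies the leading $\|c_x(g)\|^2$-behaviour of $\|g\|$. Writing $h := \Psi(x)$, $b := \Psi(xg)$, and $\t := c_x(g)$, we have $g = h^{-1} u_\t b$, and the explicit parametrization of $U$ gives $u_\t = I + N + \tfrac{1}{2}\|\t\|^2 E_{1,n+1}$, where $N$ is the nilpotent matrix with entries linear in $\t$ satisfying $N^2 = \|\t\|^2 E_{1,n+1}$. Distributing,
\[
g \;=\; h^{-1}b \;+\; h^{-1} N b \;+\; \tfrac12 \|\t\|^2\, h^{-1} E_{1,n+1}\, b.
\]

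The key observation is that, by the definition \eqref{eq: star defn}, the max-norm of the rank-one matrix $h^{-1} E_{1,n+1} b = (h^{-1} e_1)(e_{n+1}^T b)$ is \emph{exactly} $2(x\star xg)^2$, so the $\|\t\|^2$-term contributes exactly $\|\t\|^2 (x\star xg)^2$ to $\|g\|$. The remaining terms $\|h^{-1}b\|_\infty$ and $\|h^{-1} N b\|_\infty$ are bounded by $C_0(x,\Omega)$ and $C_1(x,\Omega)\|\t\|$ respectively, since $h$ depends only on $x$ and $b = \Psi(xg)$ lies in the compact set $\Psi(\Omega) \subseteq AK$. Applying the triangle inequality in both directions---the upper bound entrywise, and the lower bound at an entry $(i^*, j^*)$ where $h^{-1}E_{1,n+1}b$ attains its max---gives
\[
\bigl| \|g\| \;-\; \|\t\|^2 (x\star xg)^2 \bigr| \;\le\; C(x,\Omega)\bigl(\|\t\|+1\bigr).
\]

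From this quadratic-type relation, both parts follow by elementary manipulation: $\|g\| \le T$ yields $\|\t\|(x\star xg) \le \sqrt{T}+c$ using $\sqrt{A+B}\le\sqrt{A}+\sqrt{B}$, and $\|g\| \ge T$ yields $\|\t\|(x\star xg) > \sqrt{T}-c$ using $\sqrt{A-B}\ge\sqrt{A}-\sqrt{B}$ (valid for $A\ge B\ge 0$, which is why $T$ must be larger than a constant depending on $x,\Omega$). The constant $c=c(x,\Omega)$ absorbs the error constants, using also that $x\star xg$ stays in a compact subinterval of $(0,\infty)$ as $xg$ ranges over $\Omega$, a consequence of $\Psi(\Omega)$ being compact in $AK$ together with the non-vanishing of $h^{-1}e_1$ and of the last row of $b$. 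The main obstacle is ensuring the \emph{tight} additive form $\sqrt{T}\pm c$ rather than a loose $C\sqrt{T}\pm c$; this relies on the exact matching of the $\|\t\|^2$-coefficient of $\|g\|$ with $(x\star xg)^2$, which uses the rank-one structure of $h^{-1}E_{1,n+1}b$ together with the fact that the max-norm of an outer product $uv^T$ equals the product $\|u\|_\infty \|v\|_\infty$.
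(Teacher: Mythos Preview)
Your proposal is correct and follows essentially the same route as the paper: decompose $g=\Psi(x)^{-1}u_{\t}\Psi(xg)$, expand $u_{\t}$ into its constant, linear-in-$\t$, and $\tfrac12\|\t\|^{2}E_{1,n+1}$ parts, identify the quadratic contribution as exactly $\|\t\|^{2}(x\star xg)^{2}$ via the definition of $\star$, bound the other two pieces uniformly using compactness of $\Psi(\Omega)$, and then invert the resulting quadratic-type inequality. The only cosmetic difference is that the paper carries out the last step with the quadratic formula explicitly (obtaining $\|\t\|\ge \tfrac{\sqrt{T}}{x\star xg}-\tfrac{c_2+c_1(x\star xg)}{(x\star xg)^{2}}$ and similarly for the upper bound) before taking the supremum over $\Omega$, whereas you sketch it via $\sqrt{A\pm B}\,$ bounds; both are the same elementary algebra.
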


\begin{proof}
We have $g=\Psi(x)\inv u_x(g)\Psi(xg)$. For $\t:=c_x(g)$ we get
\begin{align*}
    g & =\Psi(x)\inv u_x(g)\Psi(xg)\\
    & =\Psi(x)\inv\left(I+ \begin{pmatrix} 0 & \t &  0\\ 0 & 0 & \t^T\\ 0 & 0 & 0\end{pmatrix}+\norm{\t}^2E_{1,n+1}\right)\Psi(xg)
\end{align*}
Denote 
\begin{align*}
    c_1 & :=\max_{y\in\Omega}\left\{\norm{\Psi(x)\inv\Psi(y)}\right\},\\
    %T_0 & :=\max\left\{\norm{\t}\::\:\forall g\in\Omega,\:    c_1 +\norm{\Psi(Uh)\inv\begin{pmatrix} 0 & \t &  0\\ 0 & 0 & \t^T\\ 0 & 0 & 0\end{pmatrix}\Psi(Uhg)}    \ge\norm{\t}^2  Uh\star Uhg    \right\} ,\\
    c_2 & :=\max_{y\in\Omega,\norm{\t}\le 1}\left\{\norm{\Psi(x)\inv \begin{pmatrix} 0 & \t &  0\\ 0 & 0 & \t^T\\ 0 & 0 & 0\end{pmatrix}\Psi(y)}\right\}. 
\end{align*}
Then, $c_1$ and $c_2$ are functions of $x$ and $\Omega$. 
By the triangle inequality,
\begin{align*}
    \norm{g}&\le \|\t\|^2 (x \star xg)^2+\|\Psi(x)\inv \Psi(xg)\| + \norm{\Psi(x)\inv \begin{pmatrix} 0 & \t & 0 \\ 0 & 0 & \t^T \\ 0 & 0 & 0 \end{pmatrix} \Psi(xg)}\\
    %&\le \|\Psi(Uh)\inv \Psi(Uhg)-e\| + \frac{\norm{\t}}{T_0}\max_{\norm{\t'}\le T_o}\left\{\|\Psi(Uh)\inv \begin{pmatrix} 0 & \t' & 0 \\ 0 & 0 & \t'^T \\ 0 & 0 & 0 \end{pmatrix} \Psi(Uhg)\|\right\} + \|\t\|^2 (Uh \star Uhg)\\
    &\le \|\t\|^2 (x \star xg)^2+c_1 + c_2\|\t\| . 
\end{align*}

In a similar way
\begin{align*}
    \norm{g}&\ge \|\t\|^2 (x \star xg)^2-\|\Psi(x)\inv \Psi(xg)\| - \norm{\Psi(x)\inv \begin{pmatrix} 0 & \t & 0 \\ 0 & 0 & \t^T \\ 0 & 0 & 0 \end{pmatrix} \Psi(xg)}\\
    &\ge \|\t\|^2(x\star xg)^2- c_1 - c_2\|\t\|. 
\end{align*}
We conclude that for any $g\in\Omega$,
\begin{align}\label{eq: bound on g and star}
    \left|\norm{g}-(x\star xg)^2 \norm{\t}^2\right|
    \le c_1+c_2\norm{\t}. 
\end{align}

Assume $\norm{g}\ge T\ge c_1$. Then, by \eqref{eq: bound on g and star}\[
0\le (x\star xg)^2 \norm{\t}^2+c_2\norm{\t}+(c_1-T).\]
Using the quadratic formula, we may deduce that the right hand side of the above equation is equal to zero when \[
\norm{\t}=\frac{-c_2\pm\sqrt{c_2^2+4(T-c_1)(x\star xg)^2}}{2(x\star xg)^2}\]
Since $(x\star xg)^2$ and $\norm{t}$ are non-negative, it follows that\[
\norm{\t}\ge\frac{-c_2+\sqrt{c_2^2+4(T-c_1)(x\star xg)^2}}{2(x\star xg)^2}\]
Using the inequality $\sqrt{a\pm b}\ge\sqrt{a}-\sqrt{b}$, we arrive at\[
\norm{\t}\ge\frac{\sqrt{T}}{x\star xg}-\frac{c_2+c_1x\star xg}{(x\star xg)^2}\]

A similar computation shows that $\norm{g}\le T$ implies \[
\norm{\t}\le\frac{\sqrt{T}}{x\star xg}+\frac{c_2+c_1x\star xg}{(x\star xg)^2}.\]

Letting $c$ be the maximum of $\frac{c_2+c_1x\star xg}{(x\star xg)^2}$ for $g \in \Omega$ completes the proof.
\end{proof}

\begin{lemma}\label{lem: go from phi to an integral} Let $\varphi \in C_c(U\backslash G)$ and suppose that $\psi\in C_c(U)$ satisfies $$\int_U \psi = 1.$$ For $g \in G$, define \[f(g)=\psi(u(g))\varphi(\pi_U(g)).\]
Then for every $g\in G$, \[\varphi(\pi_U(g)) = \int_{\supp(\psi)u(g)\inv} f(u_{\t}g)d\t.\]
\end{lemma}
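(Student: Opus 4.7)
The plan is to unwind the definition using the Iwasawa decomposition and show that the integral reduces to $\varphi(\pi_U(g))$ times $\int_U \psi = 1$ after a translation-invariance change of variables.

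First I would observe that left-multiplication by an element of $U$ does not change the $U$-coset: since $u_\t u(g) \in U$, the Iwasawa decomposition of $u_\t g = u_\t u(g)\Psi(Ug)$ has $U$-component $u_\t u(g)$ and $AK$-component $\Psi(Ug)$. Consequently
\[
u(u_\t g) = u_\t u(g), \qquad \pi_U(u_\t g) = \pi_U(g),
\]
so by the definition of $f$,
\[
f(u_\t g) = \psi(u_\t u(g))\,\varphi(\pi_U(u_\t g)) = \psi(u_\t u(g))\,\varphi(\pi_U(g)).
\]

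Next I would pull the factor $\varphi(\pi_U(g))$, which is independent of $\t$, outside the integral. It remains to verify that the remaining integral of $\psi(u_\t u(g))$ over $\t$ with $u_\t \in \supp(\psi)u(g)^{-1}$ equals one. Parametrizing $U \cong \R^{n-1}$ via $u_\t$ and writing $u(g) = u_{\bm\tau_0}$ for some $\bm\tau_0 \in \R^{n-1}$, the substitution $\t' = \t + \bm\tau_0$ (whose Jacobian is trivial, being a translation on $\R^{n-1}$) gives
\[
\int_{\supp(\psi)u(g)^{-1}} \psi(u_\t u(g))\,d\t = \int_{\supp(\psi)} \psi(u_{\t'})\,d\t' = \int_U \psi = 1,
\]
where the domain of integration transforms correctly because $u_\t \in \supp(\psi)u(g)^{-1}$ is equivalent to $u_{\t'} = u_\t u(g) \in \supp(\psi)$.

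Combining these two steps yields
\[
\int_{\supp(\psi)u(g)^{-1}} f(u_\t g)\,d\t = \varphi(\pi_U(g)),
\]
as claimed. There is no real obstacle here; the only subtle point is the bookkeeping for the domain of integration, which is handled cleanly by translation invariance of Lebesgue measure on $U$.
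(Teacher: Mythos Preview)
Your proof is correct and follows essentially the same approach as the paper: both arguments reduce the integral to $\varphi(\pi_U(g))\int_U\psi$ via the observations $u(u_\t g)=u_\t u(g)$, $\pi_U(u_\t g)=\pi_U(g)$, and a translation change of variables on $U\cong\R^{n-1}$. The only cosmetic difference is that the paper starts from $\varphi(\pi_U(g))$ and builds up the integral, whereas you start from the integral and simplify it down.
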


\begin{proof}
By the definition of $\psi$,
    \begin{align*}
        \varphi(\pi_U(g))&=\varphi(\pi_U(g))\int_{\supp(\psi)}\psi(u_\t)d\t\\
        &=\varphi(\pi_U(g))\int_{u(g)^{-1}\supp(\psi)}\psi(u(g)u_\t)d\t.
    \end{align*}
    Since $\pi_U(u_{t}g)=\pi_U(g),$ we have %\tcr{I think the below is wrong. Why is $u_h(g)u_\t = u_h(u_{-\t}g)$? It was true for $u_e$, but unclear to me now. Maucourant-Schapira uses $u_e$ in the definition of $f$ actually. But then they seem to maybe have $u_h$ since they have $\Psi(u)$ everywhere?}
    \begin{align*}
        \varphi(\pi_U(g))&=\int_{u(g)\inv \supp(\psi)} \psi(u(u_{\t}g))\varphi(\pi_U(u_{\t}g))d\t\\
        &=\int_{u(g)^{-1}\supp(\psi)}f(u_{\t}g)d\t
    \end{align*}
\end{proof}

We are now ready to prove Proposition \ref{prop: setting up for equidistribution}. 

\begin{proof}[Proof of Proposition \ref{prop: setting up for equidistribution}] Without loss of generality, we may assume that $\varphi\ge 0$. Define $f:G\rightarrow\R$ by \[f(g)=\psi(u(g))\varphi(\pi_U(g)).\] By Lemma \ref{lem: go from phi to an integral}, for every $g \in G$, \begin{equation}\varphi(\pi_U(g))=\int_{u(g)\inv B_U(\eta)}f(u_\t g)d\t.\label{eq: noneffective use of varphi to integral}\end{equation}

By Lemma \ref{lem: bound on ug}, there exist $c>0$ depending on $\Omega$ and $x$ such that for all $T \ge c$, if $\gamma \in \Gamma_T$ and $x\gamma\in\Omega$, then \begin{align}
    u_x(\gamma)\inv B_U(\eta) &\subseteq  B_U\left(\frac{\sqrt{T}+c}{x\star x\gamma}+\eta\right) \label{eq: containment of u hgamma inv BUeta}.%\nonumber \\
    %&\subseteq B_U\left(2\sqrt{(T+R(h,\varphi))}+\eta+\|u(h)\|\right). 
\end{align}
Observe also that since $\supp(\psi)\subseteq B_U(\eta),$ if $u_\t \not\in u_x(\gamma)\inv B_U(\eta)$, then $$f(u_\t \Psi(x)\gamma)=\psi(u_\t u(\Psi(x)\gamma))\varphi(\pi_U(\Psi(x)\gamma))=0.$$
Thus, using \eqref{eq: c_e hg vs c_h g} and Lemma \ref{lem: go from phi to an integral}, for $\gamma \in \Gamma_T$ with $x\gamma\in\Omega$, we have that \begin{align} %\label{eq: varphi h gamma equal to integral over large ball}
    \varphi(x\gamma) &= \int_{u(\Psi(x)\gamma)\inv B_U(\eta)}f(u_{\t}\Psi(x)\gamma)d\t\nonumber\\
    &= \int_{u_x(\gamma)\inv B_U(\eta)}f(u_{\t}\Psi(x)\gamma)d\t\nonumber\\ 
    &=\int_{B_U\left(\frac{\sqrt{T}+c}{x\star x\gamma}+\eta\right)} f(u_{\t}\Psi(x)\gamma)d\t. \label{eq: integral proof large ball}
    %& \le\int_{B_U\left(\frac{\sqrt{T}+c}{r}+\eta\right)} f(u_{\t}\Psi(x)\gamma)d\t
\end{align}

Note that \begin{align}
& F(g\Gamma):=\sum_{\gamma\in\Gamma}f(g\gamma)\label{eq: defn of F g Gamma}\end{align}

Thus, from \eqref{eq: integral proof large ball}, for $r=r_\varphi:=\min\limits_{y\in \supp\varphi}(x\star y)$, we obtain
%By Theorem \ref{thm;br equidistr}, there exist $\ell,\kappa',c_2$ as in the statement of Theorem \ref{thm;br equidistr} and $T_1=T_1(x,\Omega)\ge c$ such that for $T \ge T_0$,
\begin{align}\sum_{\gamma\in\Gamma_T}\varphi(x\gamma) 
	&\le \sum\limits_{\gamma \in \Gamma_T} \int_{B_U\left(\frac{\sqrt{T}+c}{r}+\eta\right)}f(u_\t\Psi(x) \gamma)d\t \nonumber\\
	&\le\int_{B_U\left(\frac{\sqrt{T}+c}{r}+\eta\right)}F(u_\t \Psi(x)\Gamma)d\t. \nonumber
%	& \le \ps_{\Psi(x)\Gamma}\left(B_U\left(\frac{\sqrt{T}+c}{r}+\eta\right)\right) \left(\br(F)+c_2S_\ell(F)T^{-\kappa'}\right).\label{eq: upper bound of sum varphi gamma} 
\end{align}

To obtain a lower bound, we must control the terms arising from $\gamma \in \Gamma \setminus \Gamma_T$ in the definition of $F$. Note that by Lemma \ref{lem: bound on ug}, if $\gamma \in (\Gamma \setminus \Gamma_T)$ and $x\gamma\in\Omega$, then we see that \begin{equation*}\label{eq: uhgamma disjoint from smaller ball} u_x(\gamma)\inv B_U(\eta) \cap  B_U\left(\frac{\sqrt{T}-c}{x\star x\gamma}-\eta\right) =\emptyset.\end{equation*}Thus, by \eqref{eq: noneffective use of varphi to integral}, we obtain 
\begin{align*}
    \sum\limits_{\gamma \in \Gamma_T} \varphi(x\gamma) &= \sum\limits_{\gamma \in \Gamma_T} \int_{ B_U\left(\frac{\sqrt{T}-c}{x\star x\gamma}-\eta\right)} f(u_\t \Psi(x)\gamma)d\t\nonumber \\
    &=\sum\limits_{\gamma \in \Gamma} \int_{ B_U\left(\frac{\sqrt{T}-c}{x\star x\gamma}-\eta\right)} f(u_\t \Psi(x)\gamma)d\t\nonumber\end{align*} Now, similarly to the above, we conclude that \begin{align}
        \sum\limits_{\gamma \in \Gamma_T} \varphi(x\gamma) &\ge  \sum\limits_{\gamma \in \Gamma} \int_{ B_U\left(\frac{\sqrt{T}-c}{R}-\eta\right)} f(u_\t \Psi(x)\gamma)d\t\nonumber %\\
        %&= \int_{ B_U\left(\frac{\sqrt{T}-c}{R}-\eta\right)} F(u_\t \Psi(x)\Gamma)d\t\nonumber
       % &\ge \int_{ B_U\left(\frac{\sqrt{T}-c}{R}-\eta\right)} F(u_\t \Psi(x)\gamma)d\t\nonumber,
   % &\ge \int_{B_U\left(\sqrt{T}-c-\eta R\right)} F_2(u_\t \Psi(x)\Gamma)d\t \nonumber\\
    %&\ge \ps_{\Psi(x)\Gamma}\left(B_U\left(\sqrt{T}-c-\eta R\right)\right)\left(\br(F_2) - c_2S_\ell(F_2)T^{-\kappa'}\right)
    %&\ge  \ps_{\Psi(x)\Gamma}\left(B_U\left(\frac{\sqrt{T}+c}{R}-\eta\right)\right) \left(\br(F)+c_2S_\ell(F)T^{-\kappa'}\right).\label{eq: lower bound of sum varphi gamma}
\end{align} where $R=R_\varphi:=\max\limits_{y \in \supp\varphi}(x\star y).$ Then, the claim follows from the definition of $F$, \eqref{eq: defn of F g Gamma}.
\end{proof}

\begin{lemma}\label{lem: br computation}
Let $\varphi \in C_c(U\backslash G)$ and $F$ be as defined in Proposition \ref{prop: setting up for equidistribution}. Then, 
\begin{align}
    \br(F) =\int_{P}{\varphi(\pi_U(p))}d\nu(p)
\end{align} 
\end{lemma}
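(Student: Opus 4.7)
The strategy is to unfold the sum defining $F$ back to the cover $G$ and then apply the product structure \eqref{eq:br product structure} of the BR measure. Set
\[
f(g) := \psi(u(g))\,\varphi(\pi_U(g)),
\]
so that $f \in C_c(G)$ and $F(g\Gamma) = \sum_{\gamma \in \Gamma} f(g\gamma)$. By the standard unfolding relation between $\tbr$ on $G$ and its $\Gamma$-quotient $\br$ on $G/\Gamma$, we have $\br(F) = \tbr(f)$.

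Next, I would apply the product structure from \eqref{eq:br product structure} with $g = e$:
\[
\tbr(f) = \int_{P}\int_{U} f(u_\t p)\, d\t\, d\nu(p).
\]
The inner integral can be computed explicitly. Writing $p = u(p)\Psi(p)$ in the Iwasawa decomposition, we have $u_\t p = u_{\t + c_e(p)}\,\Psi(p)$, so $u(u_\t p) = u_{\t + c_e(p)}$ and $\pi_U(u_\t p) = \pi_U(p)$. Therefore, using translation-invariance of Lebesgue measure on $U$ and the normalization $\int_U \psi = 1$,
\[
\int_U f(u_\t p)\, d\t = \varphi(\pi_U(p)) \int_U \psi(u_{\t + c_e(p)})\, d\t = \varphi(\pi_U(p)).
\]

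Substituting back yields
\[
\br(F) = \tbr(f) = \int_P \varphi(\pi_U(p))\, d\nu(p),
\]
as claimed. The only step requiring any care is the unfolding $\br(F) = \tbr(f)$, but this is immediate from the fact that $\br$ is the measure on $G/\Gamma$ induced by the left $\Gamma$-invariant measure $\tbr$ on $G$ and $f \in C_c(G)$; the remaining manipulations are direct consequences of the Iwasawa parametrization and Fubini.
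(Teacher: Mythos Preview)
Your proof is correct and follows essentially the same approach as the paper: unfold $\br(F)$ to $\tbr(f)$, apply the product structure \eqref{eq:br product structure} of $\tbr$, and use translation invariance of $d\t$ together with $\int_U\psi=1$ to collapse the inner integral. The paper's proof is terser (it writes $\psi(u_\t u(p))$ directly rather than passing through $c_e(p)$), but the argument is identical.
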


\begin{proof}
By the definition of $F$ and the assumption that $\int_U\psi=1$, by the product structure of the BR measure in \eqref{eq:br product structure}, we obtain
\begin{align*}
    \br(F)   
    & = \int_{G}\psi(u(g)){\varphi(\pi_U(g))}d\tbr(g)\\
    & =\int_{P}\int_U \psi(u_\t u(p)){\varphi(\pi_U(p))}d\t d\nu(p)\\
    & =\int_{P}{\varphi(\pi_U(p))}d\nu(p).
\end{align*} 
\end{proof}

For a set $H \subseteq G$, let \[B(H,r) = \{g \in G : d(g,H) \le r\},\] 
where $d$ is the Riemannian metric on $G$. That is, $B(H,r)$ is the $r$-thickening of $H$ with respect to $d$. For $h\in G$, we denote $B(\{h\},r)$ by $B(h,r)$ (in this case we get the Riemannian ball around the point $h$).

For $H \subseteq G$, denote by $$\inj(H)$$ the infimum over all $r>0$ satisfying that for every $h\in H,$ $$\pi_\Gamma|_{B(h,r)} : B(h,r) \to G/\Gamma$$ is injective.

In the later sections, we will require a partition of $\varphi$, say into $\varphi_1,\ldots,\varphi_k$ so that for each $i$, $R_{\varphi_i}$ and $r_{\varphi_i}$ are close. %This is so that the bounds obtained in Proposition \ref{prop: setting up for equidistribution} are not too far apart.

\begin{lemma}\label{lem: partition of the function}
Fix $x \in U\backslash G.$ For a compact set $H \subseteq G,$ there exists $0<\eta_0=\eta(H)<\inj(H),$ $\beta = \beta(H)>1$ so that for any $0<\eta<\eta_0$ and $\varphi \in C_c(U\backslash G)$ with $\supp\varphi\subset\pi_U(B(h,\eta))$ for some $h \in H$, we have that \[\frac{R_{\varphi}}{r_\varphi} -1 \le \|\Psi(x)\inv\|\beta\eta.\]
\end{lemma}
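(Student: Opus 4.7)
The plan is to exploit the rank-one structure of $E_{1,n+1}=e_1e_{n+1}^T$ to factor the star product. Since
\[\Psi(x)\inv E_{1,n+1}\Psi(y)=(\Psi(x)\inv e_1)(e_{n+1}^T\Psi(y))\]
is an outer product of a column vector and a row vector, its max norm factors as a product, yielding
\[(x\star y)^2=\tfrac{1}{2}\mu(x)\rho(y),\qquad\text{where}\quad\mu(x):=\|\Psi(x)\inv e_1\|,\quad\rho(y):=\|e_{n+1}^T\Psi(y)\|.\]
Consequently
\[\frac{R_\varphi}{r_\varphi}=\sqrt{\frac{\rho_{\max}}{\rho_{\min}}},\]
where $\rho_{\max}$ and $\rho_{\min}$ are the maximum and minimum of $\rho$ on the compact set $\supp\varphi$. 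Crucially, this ratio is \emph{independent} of $x$, which is what makes the claimed bound (with its $\|\Psi(x)\inv\|$ factor on the right) accessible.

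The rest is routine use of smoothness and compactness. I choose $\eta_0=\eta(H)<\inj(H)$ small enough that the neighborhood $B(H,\eta_0)\subseteq G$ is contained in a fixed compact set. Because the Iwasawa decomposition is smooth, $g\mapsto\Psi(\pi_U(g))$ is smooth, hence Lipschitz on $B(H,\eta_0)$ with some constant $L_0(H)$; since $\bigl|\|u\|-\|v\|\bigr|\le\|u-v\|$, it follows that $\rho$ is Lipschitz on $\pi_U(B(H,\eta_0))$ with some constant $L_1=L_1(H)$. Moreover, $\rho$ is continuous and strictly positive---the last row of any $\SL_{n+1}(\R)$ matrix is nonzero---so by compactness there exists $\rho_0=\rho_0(H)>0$ with $\rho\ge\rho_0$ throughout $\pi_U(B(H,\eta_0))$.

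If $\supp\varphi\subseteq\pi_U(B(h,\eta))$ for some $h\in H$ and $0<\eta<\eta_0$, then any two points of $\supp\varphi$ admit lifts in $B(h,\eta)$ at mutual distance $\le 2\eta$, so $\rho_{\max}-\rho_{\min}\le 2L_1\eta$. Combining with $\rho_{\min}\ge\rho_0$ and the elementary inequality $\sqrt{1+t}-1\le t/2$ for $t\ge 0$ gives
\[\frac{R_\varphi}{r_\varphi}-1\le\frac{L_1\eta}{\rho_0}.\]
Finally, $\Psi(x)\inv\in\SL_{n+1}(\R)$ has $|\det|=1$, which forces its maximal entry in absolute value---namely $\|\Psi(x)\inv\|$---to be bounded below by a dimensional constant $c_n>0$. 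Setting $\beta(H):=\max\{1,\,L_1/(c_n\rho_0)\}$ then yields the stated bound. The only nontrivial step is the opening factorization; without exploiting the rank-one structure of $E_{1,n+1}$, a naive Lipschitz estimate on $(x\star y)^2$ would leave behind an $x$-dependent factor $\|\Psi(x)\inv\|/\|\Psi(x)\inv e_1\|$ that cannot be controlled in terms of $H$ alone.
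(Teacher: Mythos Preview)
Your proof is correct and takes a genuinely different route from the paper's. The paper estimates the full quantity $\|\Psi(x)\inv E_{1,n+1}\Psi(y)\|$ directly: using the identity $E_{1,n+1}\Psi(\pi_U(g))=E_{1,n+1}g$ (equivalent to your implicit $e_{n+1}^T u_\t=e_{n+1}^T$) together with the local comparison of the Riemannian and max-norm metrics, it obtains $\bigl|2(x\star\pi_U(g))^2-2(x\star\pi_U(h))^2\bigr|\le\beta\eta\|\Psi(x)\inv\|$ for $g\in B(h,\eta)$, so that the factor $\|\Psi(x)\inv\|$ appears naturally. It then divides through by $r^2$ and asserts that $r$ is bounded below by a constant depending only on $H$. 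By contrast, you exploit the rank-one structure of $E_{1,n+1}$ to factor $(x\star y)^2=\tfrac{1}{2}\mu(x)\rho(y)$, making $R_\varphi/r_\varphi=\sqrt{\rho_{\max}/\rho_{\min}}$ manifestly independent of $x$; the factor $\|\Psi(x)\inv\|$ is then inserted at the end via the determinant bound $\|\Psi(x)\inv\|\ge c_n$.

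What your approach buys is exactly what you flag in your final remark: since $r^2=\tfrac{1}{2}\mu(x)\rho_{\min}$ and $\mu(x)=\|\Psi(x)\inv e_1\|$ is \emph{not} bounded below uniformly in $x$ (for $\Psi(x)=a_sk$ one has $\mu(x)\asymp e^{-s}$), the paper's division step implicitly carries the $x$-dependent ratio $\|\Psi(x)\inv\|/\mu(x)$, which your factorization sidesteps entirely. Your argument thus delivers the lemma exactly as stated, with $\beta$ depending only on $H$. One minor simplification you could make: since $e_{n+1}^T u_\t=e_{n+1}^T$, one has $\rho(\pi_U(g))=\|e_{n+1}^T g\|$ directly, so the Lipschitz bound on $\rho$ follows from the metric comparison alone without invoking smoothness of the Iwasawa map.
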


\begin{proof}
Since $B(H,1)$ is a compact set, by \cite[Lemma 9.12]{EinsiedlerWard}, there exist constants $0<\eta_0 = \eta(H)<\inj(H)$, $\beta=\beta(H)>1$, such that $\eta_0<1$ and for all $g,h \in B(H,1)$ with $d(g,h)\le \eta_0$, \begin{equation}
    \beta\inv \|g-h\|\le d(g,h) \le \beta \|g-h\|. \label{eq: equivalence of Riemannian and max}
\end{equation} 
Therefore, for any $h\in H$ and $0<\eta<\eta_0$, we have $$B(h,\eta) \subseteq \{g \in G : \|g-h\|\le \beta\eta\}.$$

Note that for any $g\in G$,\[
E_{1,n+1} \Psi(\pi_U(g))=E_{1,n+1} g.\]
Thus, if $\|g-h\|<\beta\eta$, then 
\begin{align*}
    \norm{\Psi(x)\inv E_{1,n+1} \Psi(\pi_U(g))}& =
    \norm{\Psi(x)\inv E_{1,n+1}g}\\
    &\le \norm{\Psi(x)\inv E_{1,n+1} h}+\norm{\Psi(x)\inv E_{1,n+1} (g-h)}\\
    &\le \norm{\Psi(x)\inv E_{1,n+1} \Psi(\pi_U(h))}+\beta\eta\norm{\Psi(x)\inv},
\end{align*} and similarly \[\norm{\Psi(x)\inv E_{1,n+1} \Psi(\pi_U(g))} \ge  \norm{\Psi(x)\inv E_{1,n+1} \Psi(\pi_U(h))} -\beta\eta\norm{\Psi(x)\inv}.\]

Thus, it follows from \eqref{eq: star defn} that for \begin{equation*}
    R = \max\limits_{y \in \pi_U(B(h,\eta))} (x\star y), \quad r = \min\limits_{y \in \pi_U(B(h,\eta))}(x\star y),
\end{equation*} we have $$R-r \le 2\beta \|\Psi(x)\inv\| \eta.$$
Since $r$ is bounded below by a constant depending on $H$, this implies that \[\left(\frac{R}{r}\right)-1 \ll_{H}\|\Psi(x)\inv\| \eta.\]
\end{proof}

\begin{corollary}\label{cor: partition of the function}
    Fix $x \in U\backslash G$ and $\varphi \in C_c(U\backslash G).$ Let $\eta_0=\eta_0(\Psi(\supp\varphi))$ be as in Lemma \ref{lem: partition of the function}. For any $0<\eta<\eta_0,$ there exist some $k$ and $\varphi_1,\ldots,\varphi_k \in C_c(U\backslash G)$ so that \[\sum\limits_{i=1}^k \varphi_i = \varphi \quad \text{ and } \frac{R_{\varphi_i}}{r_{\varphi_i}}-1 \ll_{\Gamma,\supp\varphi} \eta.\] Moreover, if $\varphi \in C_c^\infty(U\backslash G)$, then we also have $\varphi_i \in C_c^\infty(U\backslash G)$, and that for any $\ell'>0$, there exists $\ell > \ell'$ satisfying \begin{equation}
    \sum\limits_{i=1}^k S_{\ell'}(\varphi_i) \ll_{\ell,\supp\varphi} \eta^{-\ell+n(n+1)/4}S_\ell(\varphi).% \label{eq: sum of sobolev varphi i}
\end{equation}
\end{corollary}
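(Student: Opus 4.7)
The strategy is to pull back a smooth partition of unity on $G$ via the continuous section $\Psi\colon U\backslash G \to G$. First I would set $H := \Psi(\supp\varphi) \subseteq G$ and take $\eta_0 = \eta_0(H)$ as provided by Lemma \ref{lem: partition of the function}. Given any $0 < \eta < \eta_0$, I would apply Lemma \ref{lem:partition of unity} to the Riemannian manifold $G$ (of dimension $N = n(n+1)/2$) with the bounded set $E = H$ and radius $\eta$, producing smooth functions $\sigma_1, \dots, \sigma_k$ on $G$ with each $\sigma_i \in C_c^\infty(B(h_i, \eta))$ for some $h_i \in H$, $\sum_i \sigma_i \equiv 1$ on $H$, and
\[
\sum_{i=1}^k S_\ell(\sigma_i) \ll_{N,H} \eta^{-\ell + n(n+1)/4}.
\]

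Next, I would define $\varphi_i(y) := \varphi(y)\,\sigma_i(\Psi(y))$ for $y \in U\backslash G$. Because $\Psi$ is a section of $\pi_U$, one has $\Psi^{-1}(B(h_i,\eta)) \subseteq \pi_U(B(h_i,\eta))$, so $\supp\varphi_i \subseteq \pi_U(B(h_i,\eta))$ with $h_i \in H$. For $y \in \supp\varphi$, $\Psi(y) \in H$, and so $\sum_i \sigma_i(\Psi(y)) = 1$, whence $\sum_i \varphi_i = \varphi$. The ratio bound $\frac{R_{\varphi_i}}{r_{\varphi_i}} - 1 \ll_{\Gamma,\supp\varphi} \eta$ then follows immediately by applying Lemma \ref{lem: partition of the function} to each $\varphi_i$ (absorbing the factor $\|\Psi(x)^{-1}\|$ into the implied constant).

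For the smooth case, the product rule (Lemma \ref{lem: sobolev product rule}) gives
\[
S_{\ell'}(\varphi_i) \ll_{\ell'} S_{\ell'}(\varphi)\, S_{\ell'}(\sigma_i \circ \Psi),
\]
and the chain rule (Lemma \ref{lem: sobolev chain rule}) produces some $\ell > \ell'$, depending only on $\ell'$ and $\Psi$, such that $S_{\ell'}(\sigma_i \circ \Psi) \ll_{\ell',\Psi} S_\ell(\sigma_i)$. Summing over $i$, using $S_{\ell'}(\varphi) \le S_\ell(\varphi)$, and inserting the sum estimate above yields
\[
\sum_{i=1}^k S_{\ell'}(\varphi_i) \ll_{\ell,\supp\varphi} S_\ell(\varphi)\, \eta^{-\ell + n(n+1)/4}.
\]

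The only genuine subtlety is that the partition I construct lives on $G$ while the function $\varphi$ lives on $U\backslash G$; the transfer hinges on the support inclusion $\Psi^{-1}(B(h_i,\eta)) \subseteq \pi_U(B(h_i,\eta))$, which uses crucially that $\Psi$ is a continuous section. Everything else is routine bookkeeping with the Sobolev product and chain rules and the dimension count for $G$.
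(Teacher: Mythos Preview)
Your proposal is correct and follows essentially the same approach as the paper: both pull back a partition of unity on $G$ (constructed via Lemma~\ref{lem:partition of unity} on $H=\Psi(\supp\varphi)$) through $\Psi$ to define $\varphi_i=\varphi\cdot(\sigma_i\circ\Psi)$, invoke Lemma~\ref{lem: partition of the function} for the ratio bound, and combine the product rule (Lemma~\ref{lem: sobolev product rule}) with the chain rule (Lemma~\ref{lem: sobolev chain rule}) for the Sobolev estimate. Your explicit justification of the support inclusion $\Psi^{-1}(B(h_i,\eta))\subseteq\pi_U(B(h_i,\eta))$ via the section property is a nice touch that the paper leaves implicit.
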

\begin{proof}
For the first case (only assuming $\varphi \in C_c(U\backslash G)$), cover $\Psi(\supp\varphi)$ with balls of radius $\eta,$ and let $\sigma_1,\ldots,\sigma_k$ be a partition of unity subordinate to this cover. Defining $$\varphi_i = \varphi \cdot(\sigma_i \circ \Psi)$$ yields functions with the desired property, by Lemma \ref{lem: partition of the function}. 

Now, assume that $\varphi \in C_c^\infty(U\backslash G)$, and let $\ell>\ell'$ satisfy the conclusion of Lemma \ref{lem: sobolev chain rule} for $\ell'$. We must be more careful in order to control Sobolev norms. By Lemma \ref{lem:partition of unity}, for $0<\eta\le\eta_0$, there exist $h_1,\ldots, h_k \in \Psi(\supp\varphi)$ and $\sigma_1,\ldots,\sigma_k \in C_c^\infty(B(h_i,\eta))$ with \begin{equation}
    \sum\limits_{i=1}^k \sigma_i=1 \text{ on } \Psi(\supp\varphi) \text{ and } =0 \text{ outside } B(\Psi(\supp\varphi),\eta)
\end{equation}and such that \begin{equation}
   \sum\limits_{i=1}^k S_{\ell}(\sigma_i) \ll_{n,\supp\varphi} \eta^{-\ell + n(n+1)/4}. \label{eq: sum of sobolevs sigma i}
\end{equation}

Define $$\varphi_i = \varphi \cdot (\sigma_i\circ \Psi).$$
Then, by Lemma \ref{lem: partition of the function},\[
\frac{R_{\varphi_i}}{r_{\varphi_i}}-1\ll_{\supp\varphi}\|\Psi(x)\inv\|\eta.\]
Since $\Psi$ is smooth and $\ell>\ell'$, by Lemmas \ref{lem: sobolev product rule} and Lemma \ref{lem: sobolev chain rule}, \begin{align}
    S_{\ell'}(\varphi_i) &\ll_{\ell} S_{\ell'}(\varphi)S_{\ell'}(\sigma_i \circ \Psi) \nonumber\\
    &\ll_{\ell, \Psi} S_\ell(\varphi)S_\ell(\sigma_i).\label{eq: sobolev varphi i}
\end{align} 

From \eqref{eq: sum of sobolevs sigma i} and \eqref{eq: sobolev varphi i}, we conclude that
\begin{equation*}
    \sum\limits_{i=1}^k S_{\ell'}(\varphi_i) \ll_{\ell,n,\supp\varphi,\Psi} \eta^{-\ell+n(n+1)/4}S_\ell(\varphi).
\end{equation*}
\end{proof}

\section{Proof of Theorem \ref{thm: non effective ratio}}\label{section: proof of noneffective ratio}

In this section, we prove Theorem \ref{thm: non effective ratio}, which is restated below for convenience.

\begin{theorem}%\label{thm: non effective ratio}
    Let $\Gamma$ be geometrically finite. For any $\varphi\in C_c(U\backslash G)$ and every $x \in U\backslash G$ such that $\Psi(x)^- \in \Lambda_r(\Gamma),$ \[\sum\limits_{\gamma \in \Gamma_T}\varphi(x\gamma)\sim I(\varphi,T,x).\]
\end{theorem}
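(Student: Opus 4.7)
The plan is to reduce the counting problem on $U \backslash G$ to an equidistribution problem of $U$-orbits in $G/\Gamma$ via Proposition \ref{prop: setting up for equidistribution}, partition $\varphi$ into small pieces so that $x \star \pi_U(p)$ is nearly constant on each piece, apply the non-effective equidistribution theorem (Theorem \ref{thm: non effective eqdstr}) to each piece, and then use the friendliness estimates from Lemma \ref{lem: freindly used} to recognize the resulting sum as $I(\varphi, T, x)$.

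Concretely, fix $\epsilon' > 0$. I will first use Corollary \ref{cor: partition of the function} to decompose $\varphi = \sum_{i=1}^k \varphi_i$ with $\varphi_i$ supported in a ball of small radius $\eta$ and satisfying $R_{\varphi_i}/r_{\varphi_i} - 1 \ll \eta$. Choose a nonnegative $\psi \in C_c(B_U(\eta))$ with $\int_U \psi = 1$ and set $F_i(g\Gamma) := \sum_{\gamma\in\Gamma}\psi(u(g\gamma))\varphi_i(\pi_U(g\gamma))$, so Lemma \ref{lem: br computation} gives $m^{\operatorname{BR}}(F_i) = \int_P \varphi_i(\pi_U(p))\,d\nu(p)$. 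Proposition \ref{prop: setting up for equidistribution} then sandwiches
\[
\int_{B_U(T_i^-)} F_i(u_\t \Psi(x)\Gamma)\,d\t \;\le\; \sum_{\gamma \in \Gamma_T} \varphi_i(x\gamma) \;\le\; \int_{B_U(T_i^+)} F_i(u_\t \Psi(x)\Gamma)\,d\t,
\]
where $T_i^\pm := (\sqrt T \pm c)/r_{\varphi_i}^{\pm} \pm \eta$ (with the upper/lower signs chosen appropriately).

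Next, since $\Psi(x)^- \in \Lambda_r(\Gamma)$, Theorem \ref{thm: non effective eqdstr} applies to each $F_i$ and yields, for $T$ sufficiently large depending on $\varphi_i$,
\[
\int_{B_U(T_i^\pm)} F_i(u_\t \Psi(x)\Gamma)\,d\t \;=\; \bigl(1+o(1)\bigr)\, \mu^{\operatorname{PS}}_{\Psi(x)\Gamma}\bigl(B_U(T_i^\pm)\bigr) \int_P \varphi_i(\pi_U(p))\,d\nu(p).
\]
Here we must verify that $T_i^\pm \to \infty$ as $T \to \infty$, which is immediate since $r_{\varphi_i}$ and $R_{\varphi_i}$ are bounded above on the compact support.

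The crucial comparison step is to show that $\mu^{\operatorname{PS}}_{\Psi(x)\Gamma}(B_U(T_i^\pm))$ is close to $\mu^{\operatorname{PS}}_{\Psi(x)\Gamma}(B_U(\sqrt{T}/(x \star \pi_U(p))))$ uniformly for $p$ in the support of $\varphi_i$. Since on the support of $\varphi_i$ we have $r_{\varphi_i} \le x \star \pi_U(p) \le R_{\varphi_i}$ with $R_{\varphi_i}/r_{\varphi_i} \le 1 + O(\eta)$, Lemma \ref{lem: freindly used} (with $\ell = x\star\pi_U(p)$, $r_\pm = r_{\varphi_i}^\pm$) gives, for $T$ large enough,
\[
\Bigl| \mu^{\operatorname{PS}}_{\Psi(x)\Gamma}(B_U(T_i^\pm)) - \mu^{\operatorname{PS}}_{\Psi(x)\Gamma}\bigl(B_U(\sqrt{T}/(x\star\pi_U(p)))\bigr) \Bigr| \ll_\Gamma \bigl(\eta + O(1/\sqrt T)\bigr)^\alpha \mu^{\operatorname{PS}}_{\Psi(x)\Gamma}\bigl(B_U(\sqrt{T}/(x\star\pi_U(p)))\bigr).
\]
Integrating against $\varphi_i\,d\nu$ and summing over $i$, the sandwich becomes
\[
\sum_{\gamma \in \Gamma_T} \varphi(x\gamma) \;=\; \bigl(1 + o(1) + O(\eta^\alpha)\bigr)\, I(\varphi, T, x).
\]

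Finally, since $\eta$ was arbitrary, letting $\eta \to 0$ (after first sending $T \to \infty$ to kill the $o(1)$ error for the finitely many pieces arising from each fixed $\eta$) yields the theorem. The main obstacle is the comparison of PS-measures of nested balls whose radii differ by a multiplicative factor close to $1$; this is exactly what the friendliness-based Lemma \ref{lem: freindly used} is designed to handle, and all other steps are bookkeeping of the partition and application of the two off-the-shelf results (Proposition \ref{prop: setting up for equidistribution} and Theorem \ref{thm: non effective eqdstr}). A secondary subtlety is ensuring the partition does not interfere with the equidistribution: because the number of pieces $k$ is finite for each $\eta$, the $o(1)$ error from equidistribution per piece is harmless.
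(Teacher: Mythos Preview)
Your proposal is correct and follows essentially the same route as the paper: partition $\varphi$ via Corollary~\ref{cor: partition of the function}, apply Proposition~\ref{prop: setting up for equidistribution} and Theorem~\ref{thm: non effective eqdstr} to each piece, use Lemma~\ref{lem: freindly used} to compare the PS-measures of the nearby balls, and finally let $\eta\to 0$. The only organizational difference is that the paper packages the single-piece argument into a separate lemma (Lemma~\ref{lem: small support}) and keeps the equidistribution error as an additive $\eps$ rather than writing it multiplicatively as $(1+o(1))$; this is cosmetic, and your handling of the finitely many $o(1)$ errors is the same as the paper's use of $\eps/k$ per piece.
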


We will need the following lemma. Theorem \ref{thm: non effective ratio} will then follow by a partition of unity argument. %We will assume for the remainder of this section that all cusps have maximal rank.

\begin{lemma}\label{lem: small support}
Let $\varphi \in C_c(U\backslash G)$ and let $x\in U\backslash G$ be such that $\Psi(x)^-\in\Lambda_r(\Gamma)$. Let $R=R_\varphi$ and $r=r_\varphi$ be as in \eqref{eq: defn of R and r first time}. Let $\eta>0$, and suppose that $\frac{R}{r}<1+\eta$ and that $B_U(\eta)\Psi(\supp\varphi)$ injects into $G/\Gamma$.

Then for any $\eps>0$, there exists $T_1 = T_1(x,\eta,\varphi)>0$ such that for all $T \ge T_1$, \begin{align}
&\left|\sum\limits_{\gamma \in \Gamma_T} \varphi(x\gamma)-\int_{P}\ps_{\Psi(x)\Gamma}\left(B_U\left(\frac{\sqrt{T}}{x\star \pi_U(p)}\right)\right){\varphi(\pi_U(p))}d\nu(p)\right|\nonumber\\
&\ll_{\Gamma,x} \ps_{\Psi(x)\Gamma}\left(B_U\left(\frac{\sqrt{T}}{r}\right)\right)\left[\left(\eta+\frac{{c+1}}{\sqrt{T}}\right)^\alpha\int_{P}{\varphi(\pi_U(p))}d\nu(p) +\eps\right],\label{eq: non effestimate for nice varphi}
%&\ll_{h\Gamma,\supp\varphi} (\eta+T\inv)^{\alpha/2}\nu(\varphi\circ\pi_U)+\left(\ps_{h\Gamma}(B_U(\sqrt{2T}))\right)^{-1}S_\ell(\varphi)T^{-\kappa'}. 
\end{align} where $\alpha=\alpha(\Gamma)$ is from Lemma \ref{lem: freindly used}, and $c=c(x,\supp\varphi)>0$ is as in Proposition \ref{prop: setting up for equidistribution}. 
\end{lemma}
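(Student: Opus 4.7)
My plan is to sandwich the orbit sum by integrals of a smooth ``thickening'' of $\varphi$ against the $U$-flow on $G/\Gamma$, apply the non-effective equidistribution of Theorem~\ref{thm: non effective eqdstr}, and finally convert the resulting PS-ball measures into the integral defining $I(\varphi,T,x)$ using the Friendly-density estimate of Lemma~\ref{lem: freindly used}.

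I would first pick a non-negative $\psi\in C_c(B_U(\eta))$ with $\int_U\psi=1$ and form $F(g\Gamma):=\sum_{\gamma\in\Gamma}\psi(u(g\gamma))\varphi(\pi_U(g\gamma))$. Because $B_U(\eta)\Psi(\supp\varphi)$ injects into $G/\Gamma$, $F$ is a well-defined element of $C_c(G/\Gamma)$. Proposition~\ref{prop: setting up for equidistribution} then yields a constant $c=c(x,\supp\varphi)>0$ with
$$\int_{B_U(\frac{\sqrt T-c}{R}-\eta)}F(u_\t\Psi(x)\Gamma)\,d\t \;\le\; \sum_{\gamma\in\Gamma_T}\varphi(x\gamma)\;\le\; \int_{B_U(\frac{\sqrt T+c}{r}+\eta)}F(u_\t\Psi(x)\Gamma)\,d\t.$$
Applying Theorem~\ref{thm: non effective eqdstr} to $F$ (permissible since $\Psi(x)^-\in\Lambda_r(\Gamma)$) then gives a $T_1=T_1(x,\eta,\varphi)$ such that for $T\ge T_1$ the two outer integrals equal $\ps_{\Psi(x)\Gamma}(B_U(\cdot))\bigl(\br(F)+O(\eps)\bigr)$. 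Lemma~\ref{lem: br computation} evaluates $\br(F)=\int_P\varphi(\pi_U(p))\,d\nu(p)$.

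The decisive remaining step will be to replace the two ``frozen'' radii $(\sqrt T\pm c)/r_\pm \pm\eta$ in the PS-ball measures by the $p$-dependent radius $\sqrt T/(x\star\pi_U(p))$. Writing $\ell(p):=x\star\pi_U(p)\in[r,R]$, the hypothesis $R/r<1+\eta$ is exactly what is needed to apply Lemma~\ref{lem: freindly used} with $r_+=r$, $r_-=R$, $\ell=\ell(p)$, yielding the pointwise bound
$$\Bigl|\ps_{\Psi(x)\Gamma}\bigl(B_U(\tfrac{\sqrt T\pm c}{r_\pm}\pm\eta)\bigr)-\ps_{\Psi(x)\Gamma}\bigl(B_U(\sqrt T/\ell(p))\bigr)\Bigr|\ll_{\Gamma,x}\Bigl(\eta+\tfrac{c+1}{\sqrt T}\Bigr)^\alpha\ps_{\Psi(x)\Gamma}\bigl(B_U(\sqrt T/\ell(p))\bigr).$$
Multiplying by $\varphi(\pi_U(p))$ and integrating against $d\nu(p)$ converts the constant-radius factor into $I(\varphi,T,x)$, with error of the stated form, since $\ell(p)\ge r$ bounds every occurrence of $\ps_{\Psi(x)\Gamma}(B_U(\sqrt T/\ell(p)))$ by $\ps_{\Psi(x)\Gamma}(B_U(\sqrt T/r))$. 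The $\eps$-term inherited from equidistribution picks up the prefactor $\ps_{\Psi(x)\Gamma}(B_U(\tfrac{\sqrt T+c}{r}+\eta))$, which is comparable to $\ps_{\Psi(x)\Gamma}(B_U(\sqrt T/r))$ by Lemma~\ref{lem: PS is doubling}. The lower sandwich bound is handled identically.

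I expect the main obstacle to be the bookkeeping: ensuring both error sources (the friendly/small-support error and the equidistribution error) end up multiplied by the single prefactor $\ps_{\Psi(x)\Gamma}(B_U(\sqrt T/r))$ rather than by a larger quantity. The smallness of $R/r$ is what pins all the relevant PS-ball measures together through Lemma~\ref{lem: freindly used}, allowing this uniform prefactor to absorb everything; without it, one cannot simultaneously move the radius to $\sqrt T/\ell(p)$ and control the equidistribution prefactor by $\ps_{\Psi(x)\Gamma}(B_U(\sqrt T/r))$ with a $\Gamma$-only constant.
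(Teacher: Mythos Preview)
Your proposal is correct and follows essentially the same route as the paper: choose a bump $\psi$, apply Proposition~\ref{prop: setting up for equidistribution} and Theorem~\ref{thm: non effective eqdstr}, compute $\br(F)$ via Lemma~\ref{lem: br computation}, and then use Lemma~\ref{lem: freindly used} to replace the frozen radii by $\sqrt{T}/(x\star\pi_U(p))$. The only cosmetic difference is that the paper first records the bound for a fixed $y\in\supp\varphi$ and then sandwiches $I(\varphi,T,x)$ between the values at $y$ realizing $R$ and $r$, whereas you integrate the pointwise estimate of Lemma~\ref{lem: freindly used} directly; the two are equivalent.
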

\begin{remark}
Note that $T_1$ depends on %$\varphi$\tcr{do we mean $\eta$?} 
$\eta$ through a non-canonical choice of bump function $\psi$, as seen in the proof. When we apply this lemma to a partition of unity, the same $\psi$ will be used for each part.
\end{remark}
\begin{proof}

Let $\psi \in C(B_U(\eta))$ be a non-negative function such that $\int_U \psi=1.$ Let $F$ and $c=c(x,\supp\varphi)>0$ be as in the statement of Proposition \ref{prop: setting up for equidistribution} for this $\psi$, and let $\eps>0$. 

By Theorem \ref{thm: non effective eqdstr}, there exists $T_1=T_1(x,\psi,\varphi)$ such that for $T \ge T_1$, \begin{align}
        &\ps_{\Psi(x)\Gamma}\left(B_U\left(\frac{\sqrt{T}-c}{R}-\eta\right)\right) \left(\br(F)-\eps\right)\label{eq: noneff lower bound of sum varphi gamma}\\
    &\le \sum\limits_{\gamma \in \Gamma_T} \varphi(x\gamma) \nonumber\\
    & \le \ps_{\Psi(x)\Gamma}\left(B_U\left(\frac{\sqrt{T}+c}{r}+\eta\right)\right) \left(\br(F)+\eps\right).\label{eq: non effupper bound of sum varphi gamma}
\end{align}

Let $y \in \supp(\varphi)$. By combining the above with Lemma \ref{lem: freindly used} (using $R, r$, and $\ell = x\star y$), we see that there exist constants $c_0=c_0(\Gamma,x)$ and $T_2 = T_2(\Gamma, x, \supp\varphi)>0$ such that for $T \ge T_2$, \begin{align}
   &\left(1-c_0\left(\eta+\frac{{c+1}}{\sqrt{T}}\right)^\alpha\right) \left(\br(F)-\eps\right)\nonumber\\
   &\le \frac{1}{\ps_{\Psi(x)\Gamma}\left(B_U\left(\frac{\sqrt{T}}{x\star y}\right)\right)}\sum\limits_{\gamma \in \Gamma_T} \varphi(x\gamma) \label{eq: noneff upper and lower bound after friendly}\\
   &\le \left(1+c_0\left(\eta+\frac{{c+1}}{\sqrt{T}}\right)^\alpha\right) \left(\br(F)+\eps\right).   \nonumber 
\end{align}

By Lemma \ref{lem: br computation}, $\br(F)=\int_P \varphi(\pi_U(p))d\nu(p)$, and so by \eqref{eq: noneff upper and lower bound after friendly}, for any $y\in\supp\varphi$, we obtain that
\begin{align*}
&\left|\frac{1}{\ps_{\Psi(x)\Gamma}\left(B_U\left(\frac{\sqrt{T}}{x\star y}\right)\right)}\sum\limits_{\gamma \in \Gamma_T} \varphi(x\gamma)-\int_{P}{\varphi(\pi_U(p))}d\nu(p)\right|\nonumber\\
&\ll_{\Gamma} \left(\eta+\frac{{c+1}}{\sqrt{T}}\right)^\alpha\int_{P}{\varphi(\pi_U(p))}d\nu(p) +\eps.%\label{eq: non effestimate for nice varphi}
%&\ll_{h\Gamma,\supp\varphi} (\eta+T\inv)^{\alpha/2}\nu(\varphi\circ\pi_U)+\left(\ps_{h\Gamma}(B_U(\sqrt{2T}))\right)^{-1}S_\ell(\varphi)T^{-\kappa'}. 
\end{align*}
Since the above holds for any $y \in \supp\varphi,$ by bounding
\begin{align*}
&\sum\limits_{\gamma \in \Gamma_T} \varphi(x\gamma)-\ps_{\Psi(x)\Gamma}\left(B_U\left(\frac{\sqrt{T}}{r}\right)\right)\int_{P}{\varphi(\pi_U(p))}d\nu(p). \nonumber\\
&\le\sum\limits_{\gamma \in \Gamma_T} \varphi(x\gamma)-\int_{P}\ps_{\Psi(x)\Gamma}\left(B_U\left(\frac{\sqrt{T}}{x\star \pi_U(p)}\right)\right){\varphi(\pi_U(p))}d\nu(p)\nonumber\\
&\le \sum\limits_{\gamma \in \Gamma_T} \varphi(x\gamma)-\ps_{\Psi(x)\Gamma}\left(B_U\left(\frac{\sqrt{T}}{R}\right)\right)\int_{P}{\varphi(\pi_U(p))}d\nu(p), 
\end{align*}
we obtain
\begin{align*}
&\left|\sum\limits_{\gamma \in \Gamma_T} \varphi(x\gamma)-\int_{P}\ps_{\Psi(x)\Gamma}\left(B_U\left(\frac{\sqrt{T}}{x\star \pi_U(p)}\right)\right){\varphi(\pi_U(p))}d\nu(p)\right|\nonumber\\
&\ll_{\Gamma,x} \ps_{\Psi(x)\Gamma}\left(B_U\left(\frac{\sqrt{T}}{r}\right)\right)\left[\left(\eta+\frac{{c+1}}{\sqrt{T}}\right)^\alpha\int_{P}{\varphi(\pi_U(p))}d\nu(p) +\eps\right].
\end{align*}
\end{proof}

We are now ready to prove Theorem \ref{thm: non effective ratio}.

\begin{proof}[Proof of Theorem \ref{thm: non effective ratio}]
%Fix $0<\eta<\inj(\Psi(\supp(\varphi)))$, where for $H \subseteq G$, $\inj(H)$ denotes the infimum over all $r>0$ such that \[\pi_\Gamma|_{B(g,r)} : B(g,r)\to G/\Gamma\] is injective, where $B(g,r)=\{h \in G : \|g-h\|\le r\}$.

By Corollary \ref{cor: partition of the function}, there exists $\eta_0=\eta_0(\Psi(\supp\varphi))>0$ so that for every $0<\eta<\eta_0,$ there exists $\left\{\varphi_i\::\:1\le i\le k\right\}$ that are a partition of $\varphi$, i.e.,\[
\varphi=\sum_{i=1}^k\varphi_i\] so that all the $ \varphi_i$ are supported on a small neighborhood of $\supp\varphi$, which we denote by $B$, and each $\varphi_i$ satisfies the assumptions of Lemma \ref{lem: small support}. %According to Corollary \ref{cor: partition of the function} we may assume further that for all $i$, $\varphi_i$ satisfies the assumptions of Lemma \ref{lem: small support}. For an explicit construction of such a partition, see \S \ref{section: proof of main thm}.

For any $1\le i\le k$ let, \[
R_i= R_{\varphi_i}, \quad r_i=r_{\varphi_i}\] as in \eqref{eq: defn of R and r first time}.

Note that \[
R:=\max_{y\in B}(x\star y),\quad r:=\min_{y\in B}(x\star y)\]
satisfy $R\ge R_i\ge r_i\ge r$ for any $i$. 

Fix $\eps>0$. By Lemma \ref{lem: small support}, there exists $T_1>0$ (depending on the $\varphi_i$'s, $x$, $\eta$, and $\eps$) such that for all $T \ge T_1$ and for each $i$, 
\begin{align*}
&\left|\sum\limits_{\gamma \in \Gamma_T} \varphi_i(x\gamma)-\int_{P}\ps_{\Psi(x)\Gamma}\left(B_U\left(\frac{\sqrt{T}}{x\star \pi_U(p)}\right)\right){\varphi_i(\pi_U(p))}d\nu(p)\right|\nonumber\\
&\ll_{\Gamma,x} \ps_{\Psi(x)\Gamma}\left(B_U\left(\frac{\sqrt{T}}{r}\right)\right)\left[\left(\eta+\frac{{c+1}}{\sqrt{T}}\right)^\alpha\int_{P}{\varphi_i(\pi_U(p))}d\nu(p) +\frac{\eps}{k}\right] .
\end{align*}

Summing over $i$, we obtain  
\begin{align}
&\left|\sum\limits_{\gamma \in \Gamma_T} \varphi(x\gamma)-\int_{P}\ps_{\Psi(x)\Gamma}\left(B_U\left(\frac{\sqrt{T}}{x\star \pi_U(p)}\right)\right){\varphi(\pi_U(p))}d\nu(p)\right|\nonumber\\
&\ll_{\Gamma,x} \ps_{\Psi(x)\Gamma}\left(B_U\left(\frac{\sqrt{T}}{r}\right)\right)\left[\left(\eta+\frac{c+1}{\sqrt{T}}\right)^\alpha\int_{P}{\varphi(\pi_U(p))}d\nu(p) +\eps\right] .\label{eq: ineffective nearly done}
\end{align}

Recall that \[I(\varphi, T, x):= \int_{P}\ps_{\Psi(x)\Gamma}\left(B_U\left(\frac{\sqrt{T}}{x\star \pi_U(p)}\right)\right){\varphi(\pi_U(p))}d\nu(p).\]

By Lemma \ref{lem: PS is doubling}, there exists $\sigma=\sigma(\Gamma)>0$ so that for any $y \in \supp\varphi$, \[\frac{\ps_{\Psi(x)\Gamma} \left(B_U\left(\frac{\sqrt{T}}{r}\right)\right)}{\ps_{\Psi(x)\Gamma} \left(B_U\left(\frac{\sqrt{T}}{x\star y}\right)\right)} \ll_\Gamma \left(\frac{R}{r}\right)^\sigma.\] Thus, from \eqref{eq: ineffective nearly done}, we obtain
\begin{align*}
    &\left|\frac{\sum\limits_{\gamma \in \Gamma_T} \varphi(x\gamma)}{I(\varphi,T,x)} -1 \right|\\ &\ll_\Gamma \left(\frac{R}{r}\right)^\sigma \nu(\varphi\circ \pi_U)\inv \left[\left(\eta+\frac{{c+1}}{\sqrt{T}}\right)^\alpha\int_{P}{\varphi(\pi_U(p))}d\nu(p) +\eps\right].
\end{align*}

Since $\eta$ and $\eps$ can be chosen arbitrarily small, the claim follows.
\end{proof}

We will now deduce Corollary \ref{cor: asymptotic} using the shadow lemma, Proposition \ref{prop:shadow lemma}.

\begin{proof}[Proof of Corollary \ref{cor: asymptotic}]

Since $\Psi(x)^- \in \Lambda_r(\Gamma)$, there exists $r=r(x)\ge 0$ such that $$B_U(r) \Psi(x)\Gamma\cap \supp\bms \ne \emptyset.$$ Let $w \in B_U(r)\Psi(x)\Gamma \cap \supp\bms \subseteq G/\Gamma.$ Then for any $T\ge 0$, \[\ps_w(B_U(T-r))\le\ps_{\Psi(x)\Gamma}(B_U(T))\le \ps_w(B_U(T+r)).\]

Thus, by Proposition \ref{prop:shadow lemma}, there exists $\lambda=\lambda(\Gamma)>1$ such that for all $T\ge 0$,\[\lambda\inv (T-r)^{\delta_\Gamma} \le \ps_{\Psi(x)\Gamma}(B_U(T)) \le \lambda (T+r)^{\delta_\Gamma}.\] %lambda\inv T^{\delta_\Gamma}\le\ps_w(B_U(T))\le \lambda T^{\delta_\Gamma}.\] 

For every $y \in \supp\varphi,$ we therefore have that for all $T \ge 2r$, \begin{equation}\label{eq: bound after using CC shadow lemma}\frac{T^{\delta_\Gamma/2}}{(x\star y)^{\delta_\Gamma}} \ll_{\Gamma, x} \ps_{\Psi(x)\Gamma}\left(\frac{\sqrt{T}}{x\star y}\right) \ll_{\Gamma,x}\frac{T^{\delta_\Gamma/2}}{(x\star y)^{\delta_\Gamma}}.\end{equation}

By Theorem \ref{thm: non effective ratio}, there exists $T_0=T_0(x,\varphi)$ such that for $T \ge T_0$, \[\left|\frac{\sum\limits_{\gamma \in \Gamma_T}\varphi(x\gamma)}{I(\varphi,T,x)} -1 \right| \le 1/2.\] Then \begin{align}
    \frac{1}{\ps_{\Psi(x)\Gamma}\left(\frac{\sqrt{T}}{x\star y}\right)}\sum\limits_{\gamma \in \Gamma_T} \varphi(x\gamma) &\le  \frac{2}{\ps_{\Psi(x)\Gamma}\left(\frac{\sqrt{T}}{x\star y}\right)} I(\varphi,T,x) \nonumber
\end{align} so by \eqref{eq: bound after using CC shadow lemma}, we obtain \begin{align}\frac{1}{T^{\delta_\Gamma/2}}\sum\limits_{\gamma \in \Gamma_T} \varphi(x\gamma) &\ll_{\Gamma,x} \frac{1}{T^{\delta_\Gamma/2}} \int_P \ps_{\Psi(x)\Gamma}\left(\frac{\sqrt{T}}{x\star \pi_U(p)}\right)\varphi(\pi_U(p))d\nu(p)\nonumber\\
&\ll_{\Gamma,x} \frac{1}{T^{\delta_\Gamma/2}} \int_P \frac{T^{\delta_\Gamma/2}}{(x\star \pi_U(p))^{\delta_\Gamma}}\varphi(\pi_U(p))d\nu(p)\nonumber\\
&\ll_{\Gamma,x} \int_P \frac{\varphi(\pi_U(p))}{(x\star \pi_U(p))^{\delta_\Gamma}}d\nu(p).\end{align} The lower bound is very similar.
\end{proof}

\section{A Small Support ``Ergodic Theorem''}\label{sec: proof of small support thm}

In this section, we prove an ergodic-theorem type statement for functions with small support. This result will be used in the next chapter to prove Theorem \ref{thm: main}. 

Recall that for $x\in U\backslash G$ and a compact set $H\subset U\backslash G$, let $$\mathcal{R}(H,x):=\max\limits_{y,z\in H}\frac{x\star y}{x\star z}.$$

\begin{theorem}\label{thm: effective rewrriten}
    Let $\Gamma$ satisfy property A. There exists $\ell=\ell(\Gamma) \in \N$ so that for any $0<\eps<1$, there exists $\kappa=\kappa(\Gamma,\eps)$ satisfying: for every $x\in U\backslash G$ such that $\Psi(x)\Gamma$ is $\eps$-Diophantine and every compact $\Omega\subset G$, there exists $T_0=T_0(x,\Omega)$ so that for every $T\ge T_0$, there exists $\eta=\eta(T,\ell,\kappa,n,\Omega)>0$ such that if $\varphi\in C^\infty_c(U\backslash G)$ with $\Psi(\supp\varphi)\subseteq \Omega$ and satisfies $\mathcal{R}(\supp\varphi,x)-1<\eta$, then for every $y\in\supp\varphi$,
    \begin{align*}
        &\left|\frac{1}{\ps_{\Psi(x)\Gamma}\left(B_U\left(\frac{\sqrt{T}}{x\star y}\right)\right)}\sum_{\gamma\in\Gamma_T}\varphi(x\gamma)-\int_{P}{\varphi(\pi_U(p))}d\nu(p)\right| \ll_{\Gamma,\Omega,x}S_\ell(\varphi)T^{-\kappa}.
    \end{align*}
\end{theorem}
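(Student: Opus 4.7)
The plan is to effectivize the proof of Lemma \ref{lem: small support} by using Theorem \ref{thm;br equidistr} in place of Theorem \ref{thm: non effective eqdstr}, then to choose the scale $\eta$ of an auxiliary bump function as a power of $T^{-1}$ so that the sandwich-approximation error and the effective equidistribution error balance against each other.

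First, I would invoke Lemma \ref{lem:KleinbockMargulis}(b) to produce a non-negative $\psi \in C_c^\infty(B_U(\eta))$ with $\int_U \psi = 1$ and $S_\ell(\psi) \ll_n \eta^{-\ell + (n-1)/2}$, and form the unfolded function
\[F(g\Gamma) := \sum_{\gamma \in \Gamma} \psi(u(g\gamma))\,\varphi(\pi_U(g\gamma)).\]
For $\eta$ small enough (depending on $\Omega$), $F$ lies in $C_c^\infty(G/\Gamma)$ with support in a compact set determined by $\Omega$, and the central Sobolev estimate I will need is
\[S_\ell(F) \ll_\Omega \eta^{-\ell + (n-1)/2}\, S_\ell(\varphi),\]
which I plan to obtain from the Iwasawa factorization $G = U \cdot AK$ together with Lemmas \ref{lem:KleinbockMargulis}(a), \ref{lem: sobolev product rule}, and \ref{lem: sobolev chain rule} applied to $\pi_U$.

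With $F$ in hand, Proposition \ref{prop: setting up for equidistribution} sandwiches
\[\int_{B_U\left(\frac{\sqrt{T}-c}{R_\varphi}-\eta\right)} F(u_\t \Psi(x)\Gamma)\,d\t \;\le\; \sum_{\gamma \in \Gamma_T}\varphi(x\gamma) \;\le\; \int_{B_U\left(\frac{\sqrt{T}+c}{r_\varphi}+\eta\right)} F(u_\t \Psi(x)\Gamma)\,d\t,\]
for $c = c(x,\Omega)$. Since $\Psi(x)\Gamma$ is $(\eps, s_0)$-Diophantine and $\Gamma$ satisfies property A, Theorem \ref{thm;br equidistr} applies to $F$ and supplies $\ell = \ell(\Gamma)$ and $\kappa = \kappa(\Gamma, \eps)$ such that for each radius $r$ appearing above (which is comparable to $\sqrt{T}$),
\[\int_{B_U(r)} F(u_\t \Psi(x)\Gamma)\,d\t = \ps_{\Psi(x)\Gamma}(B_U(r))\,\br(F) + O_{\Gamma,\Omega}\!\left(S_\ell(F)\, r^{-\kappa}\, \ps_{\Psi(x)\Gamma}(B_U(r))\right),\]
while Lemma \ref{lem: br computation} identifies $\br(F) = \int_P \varphi(\pi_U(p))\,d\nu(p)$. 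The PS masses of the two perturbed balls will then be compared to $\ps_{\Psi(x)\Gamma}(B_U(\sqrt{T}/(x\star y)))$ for an arbitrary $y \in \supp\varphi$ through Lemma \ref{lem: freindly used}, applied with $r_\pm = R_\varphi, r_\varphi$ and $\ell = x\star y$; under the hypothesis $\mathcal{R}(\supp\varphi, x) - 1 < \eta$, this contributes a multiplicative factor of $1 + O\bigl((\eta + c/\sqrt{T})^\alpha\bigr)$.

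Assembling the three estimates, I expect to reach a bound of the form
\[\left|\frac{1}{\ps_{\Psi(x)\Gamma}(B_U(\sqrt{T}/(x\star y)))}\sum_{\gamma \in \Gamma_T}\varphi(x\gamma) - \int_P \varphi \circ \pi_U \, d\nu\right| \ll_{\Gamma,\Omega,x} \eta^\alpha\, \nu(\varphi \circ \pi_U) + \eta^{-\ell + (n-1)/2}\, S_\ell(\varphi)\, T^{-\kappa/2}.\]
Setting $\eta = T^{-\beta}$ with $\beta>0$ small enough that both $\beta \alpha > 0$ and $\kappa/2 - \beta(\ell - (n-1)/2) > 0$ collapses the right-hand side to $S_\ell(\varphi)\, T^{-\kappa'}$ for a new exponent $\kappa' = \kappa'(\Gamma, \eps) > 0$, and this choice of $\beta$ then determines the function $\eta = \eta(T, \ell, \kappa, n, \Omega)$ appearing in the statement.

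The hardest part, I anticipate, is the clean Sobolev bound $S_\ell(F) \ll_\Omega \eta^{-\ell + (n-1)/2}\, S_\ell(\varphi)$. The factors $\psi$ and $\varphi \circ \pi_U$ depend only on the $U$- and $AK$-coordinates in the Iwasawa decomposition, respectively, but left-invariant derivatives on $G/\Gamma$ mix the two factors through the non-abelian structure of $G = \SO(n,1)^\circ$; one must choose a basis of $\mathfrak{g}$ adapted to this decomposition and verify that the resulting cross-terms remain bounded, uniformly in $\Omega$, by $S_\ell(\psi)\, S_\ell(\varphi)$ up to the required constant. Once this estimate is settled, the remaining balancing of $\eta$ against $T$ is essentially bookkeeping.
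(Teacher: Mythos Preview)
Your proposal is correct and follows essentially the same route as the paper: construct $\psi$ via Lemma~\ref{lem:KleinbockMargulis}(b), apply the sandwich from Proposition~\ref{prop: setting up for equidistribution}, feed the endpoints into Theorem~\ref{thm;br equidistr}, compare the perturbed PS-balls via Lemma~\ref{lem: freindly used}, identify $\br(F)$ via Lemma~\ref{lem: br computation}, and then balance $\eta=T^{-\rho}$. The Sobolev bound you flag as the delicate step is exactly what the paper establishes in \eqref{eq: sobolev norm of F bound} using Lemmas~\ref{lem:KleinbockMargulis}(a) and~\ref{lem: sobolev chain rule}, so the tools you cite are precisely the ones needed.
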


\begin{proof}Fix $x \in U\backslash G$ such that $\Psi(x)\Gamma$ is $\eps$-Diophantine. Let $0<\eta_1=\eta_1(\Omega)<1$ be such that for all $g \in \Omega$, \[\pi_\Gamma|_{B(g,\eta_1)} : B(g,\eta_1)\to G/\Gamma\] is injective, where $B(g,\eta_1)=\{h \in G : \|g-h\|\le \eta_1\}$. Let $0<\eta<\eta_1.$ Then if $\Psi(\supp\varphi)\subset\Omega\subset G$, we have that \[B:=B_U(\eta)\Psi(\supp\varphi)\] injects into $G/\Gamma$. Let $R=R_\varphi$, $r=r_\varphi$ as in \eqref{eq: defn of R and r first time}. We are assuming that \begin{equation}\mathcal{R}(\supp\varphi,x) -1 = \frac{R}{r}-1<\eta.\label{eq: quotient of R,r}\end{equation} We will find $T_0=T_0(x,\Omega)$ as in the statement of the theorem, and choose $\eta$ depending on $T\ge T_0$ later.

According to Lemma \ref{lem:KleinbockMargulis}(2), there exists $\psi:U\rightarrow\R$ such that $\supp\psi=B_U(\eta)$ and 
\begin{equation}\label{eq: psi definition}
    \int_U \psi=1,\quad S_\ell(\psi)\ll\eta^{-\ell+n-1}. 
\end{equation}

We can now use Proposition \ref{prop: setting up for equidistribution} with the above $\psi$ and $\varphi$ to get an expression that we can estimate using the effective equidistribution theorem, Theorem \ref{thm;br equidistr}. 

Let $F$ and $c=c(\Omega,x)$ be as in Proposition \ref{prop: setting up for equidistribution} for $\psi,\varphi$. There exists $\ell,\kappa',c_2=c_2(\Gamma,\supp\psi,x)$ as in the statement of Theorem \ref{thm;br equidistr} and $T_1=T_1(x,\Omega)\ge c$ such that for all $T \ge T_1$,

\begin{align}
    &\ps_{\Psi(x)\Gamma}\left(B_U\left(\frac{\sqrt{T}+c}{R}-\eta\right)\right) \left(\br(F)-c_2S_\ell(F)T^{-\kappa'}\right)\label{eq: lower bound of sum varphi gamma}\\
    &\le \sum\limits_{\gamma \in \Gamma_T} \varphi(x\gamma) \nonumber\\
    & \le \ps_{\Psi(x)\Gamma}\left(B_U\left(\frac{\sqrt{T}+c}{r}+\eta\right)\right) \left(\br(F)+c_2S_\ell(F)T^{-\kappa'}\right).\label{eq: upper bound of sum varphi gamma}
\end{align}

We now need to express $\br(F)$ and $S_\ell(F)$ in terms of $\varphi$, and to compare the PS measures of the balls arising in \eqref{eq: lower bound of sum varphi gamma} and \eqref{eq: upper bound of sum varphi gamma}.

Let $y \in \supp\varphi.$ Note that, by definition of $r$ and $R$, $r\le x\star y\le R$. Hence, we may use Lemma \ref{lem: freindly used} to deduce that for $r_-:=R$ and $r_+:=r$, there exists $T_2=T_2(x,\Omega)>0$ so that for all $T\ge T_2$, we have that
\begin{align*}
    & \left|\ps_{\Psi(x)\Gamma}\left(B_U\left(\frac{\sqrt{T}\pm c}{r_{\pm}}\pm\eta\right)\right)-\ps_{\Psi(x)\Gamma}\left(B_U\left(\frac{\sqrt{T}}{x\star y}\right)\right)\right|\nonumber\\
    &\ll_{\Gamma,x} \left(\eta+\frac{{c+1}}{\sqrt{T}}\right)^\alpha\ps_{\Psi(x)\Gamma}\left(B_U\left(\frac{\sqrt{T}}{x\star y}\right)\right)
\end{align*}

According to Lemma \ref{lem: br computation}, we have 
\begin{align*}
    \br(F)   &=\int_{P}{\varphi(\pi_U(p))}d\nu(p).%\label{eq: BR measure of F bound}.
\end{align*} 

Combining the above with \eqref{eq: lower bound of sum varphi gamma} and \eqref{eq: upper bound of sum varphi gamma} implies that, for some $c_0=c_0(\Gamma,x)$, \begin{align}
   &\left(1-c_0\left(\eta+\frac{{c+1}}{\sqrt{T}}\right)^\alpha\right) \left(\int_{P}{\varphi(\pi_U(p))}d\nu(p)-c_2 S_\ell(F)T^{-\kappa'}\right)\nonumber\\
   &\le \frac{1}{\ps_{\Psi(x)\Gamma}\left(B_U\left(\frac{\sqrt{T}}{x\star y}\right)\right)}\sum\limits_{\gamma \in \Gamma_T} \varphi(x\gamma) \label{eq: upper and lower bound after friendly}\\
   &\le \left(1+c_0\left(\eta+\frac{{c+1}}{\sqrt{T}}\right)^\alpha\right) \left(\int_{P}{\varphi(\pi_U(p))}d\nu(p)+c_2 S_\ell(F)T^{-\kappa'}\right).   \nonumber 
\end{align}

We are left to find $S_\ell(F)$.
Since $B \mapsto B\Gamma$ is injective and $f$ is supported on $B$ (recall that $f$ is defined as in Lemma \ref{lem: go from phi to an integral}), using Lemma \ref{lem:KleinbockMargulis}(1), Lemma \ref{lem: sobolev chain rule}, and \eqref{eq: psi definition}, we have
\begin{align}
    S_{\ell}(F)& = S_{\ell}\left({f}\right)\nonumber\\
    & \ll_{n}  S_{\ell}(\psi)S_{\ell}\left(\varphi\circ\pi_U\right)\nonumber\\
    &\ll_{n,\Gamma}\eta^{-\ell+n-1}S_\ell(\varphi).
    %& \ll_{n,\Gamma}  \mu^{\operatorname{Haar}}(B_U(\eta-2\xi))^{-1}(\eta-\xi)^{n-1}\xi^{-\ell-(n-1)/2}S_{\ell}(\varphi)\nonumber\\
    %& \ll_{n,\Gamma}  \eta^{-\ell+(n-1)/2}S_{\ell}(\varphi).
    \label{eq: sobolev norm of F bound}
\end{align}

Finally, we need to put this all together. Combining  \eqref{eq: upper and lower bound after friendly} and \eqref{eq: sobolev norm of F bound}, for any $y\in\supp\varphi$, we obtain that
\begin{align}
&\left|\frac{1}{\ps_{\Psi(x)\Gamma}\left(B_U\left(\frac{\sqrt{T}}{x\star y}\right)\right)}\sum\limits_{\gamma \in \Gamma_T} \varphi(x\gamma)-\int_{P}{\varphi(\pi_U(p))}d\nu(p)\right|\nonumber\\
&\ll_{\Gamma,x}\left(\eta+\frac{{c+1}}{\sqrt{T}}\right)^\alpha\int_{P}{\varphi(\pi_U(p))}d\nu(p) +\eta^{-\ell+n-1}S_\ell(\varphi)T^{-\kappa'}\nonumber\\
&\ll_{\Gamma,\Omega,x} \left[\left(\eta+T^{-1/2}\right)^\alpha +\eta^{-\ell+n-1}T^{-\kappa'}\right] S_\ell(\varphi).\label{eq: estimate for nice varphi}
%&\ll_{h\Gamma,\supp\varphi} (\eta+T\inv)^{\alpha/2}\nu(\varphi\circ\pi_U)+\left(\ps_{h\Gamma}(B_U(\sqrt{2T}))\right)^{-1}S_\ell(\varphi)T^{-\kappa'}. 
\end{align}

Choose $\rho$ sufficiently small so that $$(\ell-n+1)\rho < \kappa'/2.$$ Let $\eta=T^{-\rho}$, for $T\ge T_0(x,\Omega):=\max\{T_1,T_2\}.$  Let $$\kappa=\min\{\rho\alpha,\alpha/2,\kappa'/2\}.$$ Then we conclude that

\begin{align}
    &\left|\frac{1}{\ps_{\Psi(x)\Gamma}\left(B_U\left(\frac{\sqrt{T}}{x\star y}\right)\right)}\sum\limits_{\gamma \in \Gamma_T} \varphi(x\gamma)-\int_{P}{\varphi(\pi_U(p))}d\nu(p)\right|\nonumber\\
    &\ll_{\Gamma,\Omega,x} T^{-\kappa}S_\ell(\varphi)\nonumber.
\end{align}\end{proof}

%%Since $\varphi$ is compactly supported and $\ell$ is chosen so that %$S_{\infty,1}(\varphi)\ll_{n}S_{\ell}(\varphi)$, we have \begin{equation} \label{eq: nu bound}
%    \nu(\varphi\circ\pi_U)\ll_{h\Gamma,\supp\varphi}S_{\ell}(\varphi).
%\end{equation}

%\begin{align}&\left|\sum\limits_{\gamma \in \Gamma_T} \varphi(x\gamma)-\int_{P}{\ps_{\Psi(x)\Gamma}\left(B_U\left(\frac{\sqrt{T}}{x\star \pi_U(p)}\right)\right)}{\varphi(\pi_U(p))}d\nu(p)\right|\nonumber\\&\ll_{\Gamma,\supp\varphi,x} \left[\frac{1}{\nu(\pi_U\inv(\supp\varphi))}\int_{P}{\ps_{\Psi(x)\Gamma}\left(B_U\left(\frac{\sqrt{T}}{x\star \pi_U(p)}\right)\right)\bold{1}_{\supp\varphi}(\pi_U(p))d\nu(p)}\right]\cdot\\&(1+\eta^\alpha)\left[\left(\eta+\frac{\sqrt{c}+\eta R}{\sqrt{T}}\right)^\alpha\int_{P}{\varphi(\pi_U(p))}d\nu(p) +\eta^{-\ell+(n-1)/2}S_\ell(\varphi)T^{-\kappa'}\right].\end{align}

\section{Proof of Theorem \ref{thm: main}}\label{section: proof of main thm}

In this section, we will use a partition of unity argument and the previous section to establish Theorem \ref{thm: main}, which is restated below for convenience. 

\begin{theorem} Let $\Gamma$ satisfy property A. There exists $\ell=\ell(\Gamma)\in \N$ so that for any $0<\eps<1$, there exists $\kappa=\kappa(\Gamma,\eps)$ satisfying: for every $\varphi\in C^\infty_c(U\backslash G)$  and for every $x\in U\backslash G$ such that $\Psi(x)\Gamma$ is $\eps$-Diophantine, and for all $T\gg_{\Gamma,\supp{\varphi},x}1$, 
\begin{align*}
    &\left|\frac{\sum_{\gamma\in\Gamma_T}\varphi(x\gamma)}{\int_{P}\ps_{\Psi(x)\Gamma}\left(B_U\left(\frac{\sqrt{T}}{x\star \pi_U(p)}\right)\right){\varphi(\pi_U(p))}d\nu(p)}-1\right|\\
    &\ll_{\Gamma,\supp\varphi,x} T^{-\kappa}\left(1 + S_\ell(\varphi)\nu(\varphi\circ\pi_U)\inv\right).
    \end{align*}
\end{theorem}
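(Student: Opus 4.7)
\smallskip

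\noindent\textbf{Proof plan for Theorem \ref{thm: main}.} The strategy is to reduce to the small-support statement Theorem \ref{thm: effective rewrriten} by partitioning $\varphi$, and then to aggregate the resulting local estimates. First, I invoke Corollary \ref{cor: partition of the function} to decompose $\varphi=\sum_{i=1}^{k}\varphi_i$ in $C_c^\infty(U\bs G)$ with each $\varphi_i$ supported in a small neighborhood of a point in $\Psi(\supp\varphi)$, so that $\mathcal{R}(\supp\varphi_i,x)-1\ll_{\supp\varphi,x}\eta$ for some small parameter $\eta>0$ to be chosen later as a negative power $T^{-\rho}$ of $T$. The same corollary guarantees the Sobolev control
\[
\sum_{i=1}^{k}S_{\ell'}(\varphi_i)\ll_{\ell,\supp\varphi}\eta^{-\ell+n(n+1)/4}S_\ell(\varphi),
\]
for a suitable $\ell>\ell'$ depending only on $\Gamma$.

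Next, for each $\varphi_i$ I apply Theorem \ref{thm: effective rewrriten} with $\Omega:=\Psi(\supp\varphi)$ (a fixed compact set), which, for $T$ sufficiently large, gives (for any point $y\in\supp\varphi_i$)
\[
\left|\sum_{\gamma\in\Gamma_T}\varphi_i(x\gamma)-\ps_{\Psi(x)\Gamma}\!\left(B_U\!\left(\tfrac{\sqrt{T}}{x\star y}\right)\right)\!\int_P\varphi_i(\pi_U(p))\,d\nu(p)\right|\ll_{\Gamma,\supp\varphi,x}S_\ell(\varphi_i)T^{-\kappa'}\ps_{\Psi(x)\Gamma}\!\left(B_U\!\left(\tfrac{\sqrt{T}}{x\star y}\right)\right),
\]
provided that $\mathcal{R}(\supp\varphi_i,x)-1<\eta(T)$ (so $\rho$ must be chosen large enough that $T^{-\rho}$ beats this threshold). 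Since $x\star \pi_U(p)$ varies by a factor at most $1+O(\eta)$ on $\supp\varphi_i$, Lemma \ref{lem: freindly used} lets me replace the constant $\ps_{\Psi(x)\Gamma}(B_U(\sqrt{T}/(x\star y)))$ inside the integral by the varying $\ps_{\Psi(x)\Gamma}(B_U(\sqrt{T}/(x\star\pi_U(p))))$, introducing only an additional multiplicative error of order $(\eta+T^{-1/2})^{\alpha}$. The integrals over $\varphi_i$ then sum cleanly to the full integral $I(\varphi,T,x)$.

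Summing the errors over $i$ yields the bound
\[
\left|\sum_{\gamma\in\Gamma_T}\varphi(x\gamma)-I(\varphi,T,x)\right|\ll_{\Gamma,\supp\varphi,x}\Bigl[(\eta+T^{-1/2})^{\alpha}\,\nu(\varphi\circ\pi_U)+\eta^{-\ell+n(n+1)/4}S_\ell(\varphi)\,T^{-\kappa'}\Bigr]\cdot\sup_{y\in\supp\varphi}\ps_{\Psi(x)\Gamma}\!\left(B_U\!\left(\tfrac{\sqrt{T}}{x\star y}\right)\right),
\]
where the supremum is comparable to $I(\varphi,T,x)/\nu(\varphi\circ\pi_U)$ up to a constant depending on $(\Gamma,\supp\varphi,x)$ by Lemma \ref{lem: PS is doubling} (since $R_\varphi/r_\varphi$ is bounded by the compactness of $\supp\varphi$). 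Dividing through by $I(\varphi,T,x)$ produces the desired ratio estimate, with the two error terms becoming
\[
(\eta+T^{-1/2})^{\alpha}\quad\text{and}\quad \eta^{-\ell+n(n+1)/4}\,S_\ell(\varphi)\,\nu(\varphi\circ\pi_U)^{-1}\,T^{-\kappa'}.
\]

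Finally, I choose $\rho>0$ small enough that $\rho(\ell-n(n+1)/4)<\kappa'/2$, set $\eta:=T^{-\rho}$, and take $\kappa:=\min\{\rho\alpha,\alpha/2,\kappa'/2\}$; this balances the two errors and yields the bound
\[
\left|\frac{\sum_{\gamma\in\Gamma_T}\varphi(x\gamma)}{I(\varphi,T,x)}-1\right|\ll_{\Gamma,\supp\varphi,x}T^{-\kappa}\bigl(1+S_\ell(\varphi)\,\nu(\varphi\circ\pi_U)^{-1}\bigr),
\]
as claimed. The main obstacle I expect is the balancing step: the partition into smaller pieces is what makes Theorem \ref{thm: effective rewrriten} applicable, but the cost is an increase in the Sobolev norm by a negative power of $\eta$, which competes with the polynomial decay $T^{-\kappa'}$ coming from effective equidistribution. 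Getting a single clean power $T^{-\kappa}$ requires that the threshold $\eta(T)$ produced by Theorem \ref{thm: effective rewrriten} be at most polynomially small in $T$, and that the partition error $(\eta+T^{-1/2})^\alpha$ be absorbed without a factor of $\nu(\varphi\circ\pi_U)^{-1}$ — which is what forces the two terms in the final bound.
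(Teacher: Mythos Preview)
Your overall strategy—partition $\varphi$ via Corollary \ref{cor: partition of the function}, apply a small-support estimate to each piece, sum, and balance $\eta$ against $T$—is exactly the paper's. The difference, and the one genuine gap, is in which small-support estimate you invoke.

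You apply Theorem \ref{thm: effective rewrriten} as a black box. But that theorem has \emph{already} chosen its own $\eta(T)=T^{-\rho_0}$ internally and collapsed the error to $S_\ell(\varphi_i)T^{-\kappa}$; the threshold $\mathcal{R}(\supp\varphi_i,x)-1<\eta(T)$ forces your partition scale to satisfy $\eta\le T^{-\rho_0}$, hence your $\rho\ge\rho_0$. You then need $\rho(\ell-n(n+1)/4)<\kappa/2$, but $\kappa\le\min\{\rho_0\alpha,\kappa'/2\}$, so this amounts to a constraint like $\ell-n(n+1)/4<\alpha/2$ or a comparable inequality—none of which is guaranteed, since $\ell=\ell(\Gamma)$ can be large and $\alpha=\alpha(\Gamma)$ small. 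Your two requirements on $\rho$ (``large enough to beat the threshold'' and ``small enough to control the Sobolev blowup'') can genuinely conflict.

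The paper sidesteps this by not using Theorem \ref{thm: effective rewrriten} itself but rather Lemma \ref{lem: small varphi bring into integral}, which is the intermediate estimate \eqref{eq: estimate for nice varphi} from that proof \emph{before} $\eta$ is fixed. That lemma gives an error of the shape
\[
\ps_{\Psi(x)\Gamma}\!\left(B_U\!\left(\tfrac{\sqrt T}{r}\right)\right)\Bigl[(\eta+T^{-1/2})^\alpha\,\nu(\varphi_i\circ\pi_U)+\eta^{-\ell+(n-1)/2}S_\ell(\varphi_i)T^{-\kappa'}\Bigr],
\]
with $\eta$ still a free parameter and $\kappa'$ the raw equidistribution exponent from Theorem \ref{thm;br equidistr}. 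Summing over the partition then produces a \emph{combined} Sobolev cost $\eta^{-2\ell+(n^2+3n-2)/4}S_\ell(\varphi)T^{-\kappa'}$, and a single choice of $\rho$ at the end balances everything cleanly. In short: open the box and carry $\eta$ through to the final step; otherwise the double balancing need not close.
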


Assume throughout this section that $\Gamma$ satisfies property A. We begin by interpreting \eqref{eq: estimate for nice varphi} in another form, as in the following lemma. This form will be easier to work with when using a partition of unity. Note that the main idea here is that for $\varphi$ of small support and for any $y\in\supp\varphi$, $x\star y$ is very close to both $R$ and $r$.

For $H \subseteq U\backslash G$ compact and $x \in U\backslash G$, define $$R_H = \max\limits_{y \in H} x\star y \quad\text{ and } r_H = \min\limits_{y \in H} x\star y.$$ 

\begin{lemma} \label{lem: small varphi bring into integral} 
There exists $\ell=\ell(\Gamma)>0$ which satisfies the following. Let $\Omega \subseteq G$ be a compact set, let $x \in U \backslash G$ be such that $\Psi(x)\Gamma$ is $\eps$-Diophantine, let $\varphi\in C_c^\infty(U\backslash G)$ with $\Psi(\supp\varphi)\subset\Omega$, and let $\eta>0$ be smaller than the injectivity radius of $\Omega$. Let $R=R_{\pi_U(\Omega)}$ and $r=r_{\pi_U(\Omega)}$ and assume they satisfy $\frac{R}{r}-1<\eta.$ Then for $T \gg_{\Gamma,\Omega,x} 1,$
\begin{align*}
&\left|\sum\limits_{\gamma \in \Gamma_T} \varphi(x\gamma)-\int_{P}{\ps_{\Psi(x)\Gamma}\left(B_U\left(\frac{\sqrt{T}}{x\star \pi_U(p)}\right)\right)}{\varphi(\pi_U(p))}d\nu(p)\right|\nonumber\\
&\ll_{\Gamma,\Omega,x} {\ps_{\Psi(x)\Gamma}\left(B_U\left(\frac{\sqrt{T}}{r}\right)\right)}\left(\eta+T^{-1/2}\right)^\alpha\int_{P}{\varphi(\pi_U(p))}d\nu(p)\\ 
&\hspace{1cm} +{\ps_{\Psi(x)\Gamma}\left(B_U\left(\frac{\sqrt{T}}{r}\right)\right)}\eta^{-\ell+(n-1)/2}S_\ell(\varphi)T^{-\kappa'}.
\end{align*}
\end{lemma}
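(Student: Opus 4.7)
The plan is to follow closely the structure of the proof of Theorem \ref{thm: effective rewrriten}, but to modify the final step so that the PS factor ends up \emph{inside} the integral over $P$ rather than being factored out at a single reference point $y \in \supp\varphi$. In particular, the numerical identity
\[\ps_{\Psi(x)\Gamma}(B_U(r'))\int_P \varphi(\pi_U(p))d\nu(p) = \int_P \ps_{\Psi(x)\Gamma}(B_U(\tfrac{\sqrt{T}}{x\star\pi_U(p)}))\varphi(\pi_U(p))d\nu(p) + \text{error}\]
will be controlled pointwise in $p$ rather than being replaced by the value at a single $y$.

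First I would run the setup: choose $\psi \in C_c^\infty(B_U(\eta))$ with $\int_U\psi = 1$ and $S_\ell(\psi)\ll \eta^{-\ell+(n-1)/2}$ via Lemma \ref{lem:KleinbockMargulis}(2) (since $\dim U = n-1$), and define $F(g\Gamma)=\sum_{\gamma\in\Gamma}\psi(u(g\gamma))\varphi(\pi_U(g\gamma))$. Proposition \ref{prop: setting up for equidistribution} then sandwiches $\sum_{\gamma\in\Gamma_T}\varphi(x\gamma)$ between $\int_{B_U(r^\pm)}F(u_\t\Psi(x)\Gamma)d\t$ where $r^-=(\sqrt{T}-c)/R-\eta$ and $r^+=(\sqrt{T}+c)/r+\eta$. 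Applying Theorem \ref{thm;br equidistr} to each of these integrals (which uses the $\eps$-Diophantine hypothesis on $\Psi(x)\Gamma$) yields an approximation
\[\int_{B_U(r^\pm)}F(u_\t\Psi(x)\Gamma)d\t = \ps_{\Psi(x)\Gamma}(B_U(r^\pm))\br(F) + O(\ps_{\Psi(x)\Gamma}(B_U(r^\pm))S_\ell(F)T^{-\kappa'}),\]
and Lemma \ref{lem: br computation} identifies $\br(F)=\int_P\varphi(\pi_U(p))d\nu(p)$. The Sobolev estimate $S_\ell(F)\ll\eta^{-\ell+(n-1)/2}S_\ell(\varphi)$ comes from Lemma \ref{lem:KleinbockMargulis}(1) together with Lemma \ref{lem: sobolev chain rule} and the bound on $S_\ell(\psi)$. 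Combined with the fact that $\ps_{\Psi(x)\Gamma}(B_U(r^\pm))\ll \ps_{\Psi(x)\Gamma}(B_U(\sqrt{T}/r))$ (by Lemma \ref{lem: PS is doubling} since $r^\pm$ is comparable to $\sqrt{T}/r$ thanks to $R/r-1<\eta$), this produces the second error term in the statement.

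The crucial remaining step is the replacement announced above. For each fixed $p \in \supp(\varphi\circ\pi_U)$, $x\star\pi_U(p)\in[r,R]$, so Lemma \ref{lem: freindly used} (applied with $\ell=x\star\pi_U(p)$, and with $r_-,r_+$ in the roles of $R,r$, using precisely the hypothesis $R/r-1<\eta$) gives
\[\bigl|\ps_{\Psi(x)\Gamma}(B_U(r^\pm)) - \ps_{\Psi(x)\Gamma}(B_U(\tfrac{\sqrt{T}}{x\star\pi_U(p)}))\bigr| \ll_{\Gamma,x}(\eta+T^{-1/2})^\alpha\ps_{\Psi(x)\Gamma}(B_U(\tfrac{\sqrt{T}}{x\star\pi_U(p)})),\]
and the right side is bounded by $(\eta+T^{-1/2})^\alpha\ps_{\Psi(x)\Gamma}(B_U(\sqrt{T}/r))$. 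Multiplying by $\varphi(\pi_U(p))$ and integrating over $P$ produces the first error term.

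The main obstacle is bookkeeping: ensuring that both error terms carry the same prefactor $\ps_{\Psi(x)\Gamma}(B_U(\sqrt{T}/r))$ and that the initial-time thresholds from Theorem \ref{thm;br equidistr} and Lemma \ref{lem: freindly used} are absorbed uniformly into the hypothesis $T\gg_{\Gamma,\Omega,x}1$. The hypothesis $R/r-1<\eta$ plays a dual role: it is what permits Lemma \ref{lem: freindly used} to be invoked with the roles of $r_\pm$ matching its hypothesis, and it is what ensures the PS measure on the slightly different balls $B_U(r^\pm)$ and $B_U(\sqrt{T}/(x\star\pi_U(p)))$ remain mutually comparable via Lemma \ref{lem: PS is doubling}.
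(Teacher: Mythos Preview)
Your proposal is correct and follows essentially the same route as the paper. The paper simply cites the computations leading to \eqref{eq: estimate for nice varphi} in the proof of Theorem \ref{thm: effective rewrriten} to obtain, for every $y\in\supp\varphi$, the bound
\[
\left|\sum_{\gamma\in\Gamma_T}\varphi(x\gamma)-\ps_{\Psi(x)\Gamma}\!\left(B_U\!\left(\tfrac{\sqrt{T}}{x\star y}\right)\right)\int_P\varphi(\pi_U(p))\,d\nu(p)\right|
\ll \ps_{\Psi(x)\Gamma}\!\left(B_U\!\left(\tfrac{\sqrt{T}}{x\star y}\right)\right)[\cdots],
\]
and then sandwiches the target integral $\int_P\ps_{\Psi(x)\Gamma}(B_U(\sqrt{T}/(x\star\pi_U(p))))\varphi(\pi_U(p))\,d\nu(p)$ between the values at $x\star y=R$ and $x\star y=r$ using monotonicity. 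Your version instead applies Lemma \ref{lem: freindly used} pointwise in $p$ and integrates; this is just a reorganization of the same argument with the same ingredients (Proposition \ref{prop: setting up for equidistribution}, Theorem \ref{thm;br equidistr}, Lemma \ref{lem: br computation}, Lemma \ref{lem: freindly used}, and the Sobolev bound on $F$), and it yields the identical error terms.
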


\begin{proof}
Following the arguments in the proof of Theorem \ref{thm: effective rewrriten} (more explicitely, the computations leading to \eqref{eq: estimate for nice varphi}), one may deduce that there exists $\ell=\ell(\Gamma)>0$ such that for any $T \gg_{\Gamma,\Omega,x} 1,$
\begin{align*}
&\left|\frac{1}{\ps_{\Psi(x)\Gamma}\left(B_U\left(\frac{\sqrt{T}}{x\star y}\right)\right)}\sum\limits_{\gamma \in \Gamma_T} \varphi(x\gamma)-\int_{P}{\varphi(\pi_U(p))}d\nu(p)\right|\nonumber\\
&\ll_{\Gamma,\Omega,x} \left[\left(\eta+T^{-1/2}\right)^\alpha +\eta^{-\ell+(n-1)/2}T^{-\kappa'}\right] S_\ell(\varphi).
\end{align*}
Therefore, we may conclude
\begin{align}
& -{\ps_{\Psi(x)\Gamma}\left(B_U\left(\frac{\sqrt{T}}{R}\right)\right)}\left[\left(\eta+T^{-1/2}\right)^\alpha\int_{P}{\varphi(\pi_U(p))}d\nu(p)\nonumber -\eta^{-\ell+(n-1)/2}S_\ell(\varphi)T^{-\kappa'}\right]\\
&\ll_{\Gamma,\supp\varphi,x} \sum\limits_{\gamma \in \Gamma_T} \varphi(x\gamma)-{\ps_{\Psi(x)\Gamma}\left(B_U\left(\frac{\sqrt{T}}{R}\right)\right)}\int_{P}{\varphi(\pi_U(p))}d\nu(p)\nonumber\\
&\le\sum\limits_{\gamma \in \Gamma_T} \varphi(x\gamma)-\int_{P}{\ps_{\Psi(x)\Gamma}\left(B_U\left(\frac{\sqrt{T}}{x\star \pi_U(p)}\right)\right)}{\varphi(\pi_U(p))}d\nu(p)\nonumber\\
&\le\sum\limits_{\gamma \in \Gamma_T} \varphi(x\gamma)-{\ps_{\Psi(x)\Gamma}\left(B_U\left(\frac{\sqrt{T}}{r}\right)\right)}\int_{P}{\varphi(\pi_U(p))}d\nu(p)\nonumber\\
&\ll_{\Gamma,\supp\varphi,x} {\ps_{\Psi(x)\Gamma}\left(B_U\left(\frac{\sqrt{T}}{r}\right)\right)}\left[\left(\eta+T^{-1/2}\right)^\alpha\int_{P}{\varphi(\pi_U(p))}d\nu(p) +\eta^{-\ell+(n-1)/2}S_\ell(\varphi)T^{-\kappa'}\right].\nonumber
\end{align}
\end{proof}

\bigskip

\begin{proof}[Proof of Theorem \ref{thm: main}]

\textbf{Step 1: Use an appropriate partition of $\varphi$.}

Let $\ell'=\ell'(\Gamma)>0$ satisfy the conclusion of Lemma \ref{lem: small varphi bring into integral} and $\ell>\ell'$ satisfy the conclusion of Corollary \ref{cor: partition of the function} for $\ell'$. 

By Corollary \ref{cor: partition of the function}, there exists a partition $\varphi_1,\ldots,\varphi_k$ of $\varphi$ satisfying Lemma \ref{lem: small varphi bring into integral} with $\Omega = \Psi(\supp\varphi)$ and \begin{equation}
    \sum\limits_{i=1}^k S_{\ell'}(\varphi_i) \ll_{\ell,\supp\varphi} \eta^{-\ell+n(n+1)/4}S_\ell(\varphi). \label{eq: sum of sobolev varphi i}
\end{equation} Thus, by Lemma \ref{lem: small varphi bring into integral}, we have that for each $\varphi_i$,
\begin{align}
&\left|\sum\limits_{\gamma \in \Gamma_T} \varphi_i(x\gamma)-\int_{P}\ps_{\Psi(x)\Gamma}\left(B_U\left(\frac{\sqrt{T}}{x\star\pi_U(p)}\right)\right){\varphi_i(\pi_U(p))}d\nu(p)\right|\nonumber\\
&\ll_{\Gamma,\supp\varphi,x} {\ps_{\Psi(x)\Gamma}\left(B_U\left(\frac{\sqrt{T}}{r_i}\right)\right)}\cdot\\
&\left[\left(\eta+T^{-1/2}\right)^\alpha\int_{P}{\varphi_i(\pi_U(p))}d\nu(p) +\eta^{-\ell+(n-1)/2}S_\ell(\varphi_i)T^{-\kappa'}\right].\nonumber
\end{align} 

Let \begin{equation*}
   r=\min\{r_1,\ldots,r_k\}.
\end{equation*} %Let $c>0$ satisfy the conclusion of Lemma \ref{lem: bound on ug} for $\Psi(x)\inv B(\Psi(\supp\varphi),1)$ and $x$. Then, by the proof of the Lemma \ref{lem: bound on ug}, it also satisfies the conclusion of Lemma \ref{lem: bound on ug} for $\{g \in G : \|g-h_i\|\le \beta\eta\}$ and $x$, i.e. we may replace $c_i$ with $c$ in the above. %and let $c$ be as in Lemma \ref{lem: bound on ug} for $\supp\varphi$ and $x$. (Note that $c\ge c_i$ for all $i$.)

Summing over $i$, using \eqref{eq: sum of sobolev varphi i}, and noting that $\eta<1$ yields
\begin{align}
&\left|\sum\limits_{\gamma \in \Gamma_T} \varphi(x\gamma)-\int_{P}\ps_{\Psi(x)\Gamma}\left(B_U\left(\frac{\sqrt{T}}{x\star\pi_U(p)}\right)\right){\varphi(\pi_U(p))}d\nu(p)\right|\nonumber\\
&\ll_{\Gamma,\supp\varphi,x} {\ps_{\Psi(x)\Gamma}\left(B_U\left(\frac{\sqrt{T}}{r}\right)\right)}
\left(\eta+T^{-1/2}\right)^\alpha\int_{P}{\varphi(\pi_U(p))}d\nu(p) \nonumber\\&\hspace{1cm}+{\ps_{\Psi(x)\Gamma}\left(B_U\left(\frac{\sqrt{T}}{r}\right)\right)}\eta^{-2\ell+(n^2+3n-2)/4}S_\ell(\varphi)T^{-\kappa'}.\label{eq: bound after partition of unity}
\end{align}

\bigskip
\textbf{Step 2: Putting it together.}

Recall \begin{equation*}
    I(\varphi,T,x):=\int_{P}\ps_{\Psi(x)\Gamma}\left(B_U\left(\frac{\sqrt{T}}{x\star\pi_U(p)}\right)\right){\varphi(\pi_U(p))}d\nu(p).
\end{equation*}

Let $$R=R_\varphi :=\max\limits_{y\in\supp\varphi} x\star y.$$ By Lemma \ref{lem: PS is doubling}, we have that there exists $\sigma=\sigma(\Gamma)>0$ so that  \begin{align}\frac{\ps_{\Psi(x)\Gamma}\left(B_U\left(\frac{\sqrt{T}}{r}\right)\right)}{I(\varphi,T,x)}
&\le \frac{\ps_{\Psi(x)\Gamma}\left(B_U\left(\frac{\sqrt{T}}{r}\right)\right)}{\ps_{\Psi(x)\Gamma}\left(B_U\left(\frac{\sqrt{T}}{R}\right)\right)\nu(\varphi\circ\pi_U)} \nonumber\\
&\ll_{\Gamma} \left(\frac{R}{r}\right)^\sigma \nu(\varphi\circ\pi_U)\inv \nonumber\\
&\ll_{\Gamma,\supp\varphi,x} \nu(\varphi\circ\pi_U)\inv,\label{eq: ratio with I after friendly}
\end{align} where the last line follows because $(R/r)^\sigma$ is simply a constant depending on $\supp\varphi,\Gamma,$ and $x$.

From \eqref{eq: bound after partition of unity} and \eqref{eq: ratio with I after friendly}, we obtain that%\[C:=\frac{1}{\int_{\pi_U\inv(\supp\varphi)\cap P}\ps_{\Psi(x)\Gamma}\left(B_U\left(\frac{\sqrt{T}}{x\star\pi_U(p)}\right)\right)d\nu(p)}\] and
\begin{align}
    \left|\frac{\sum\limits_{\gamma \in \Gamma_T} \varphi(x\gamma)}{I(\varphi,T,x)}-1\right|
    &\ll_{\Gamma,\supp\varphi,x} \frac{\ps_{\Psi(x)\Gamma}\left(B_U\left(\frac{\sqrt{T}}{r}\right)\right)}{I(\varphi,T,x)}
   \left(\eta+T^{-1/2}\right)^\alpha\int_{P}{\varphi(\pi_U(p))}d\nu(p)\nonumber\\
    &\hspace{2cm}+\frac{\ps_{\Psi(x)\Gamma}\left(B_U\left(\frac{\sqrt{T}}{r}\right)\right)}{I(\varphi,T,x)}\eta^{-2\ell+(n^2+3n-2)/4}S_\ell(\varphi)T^{-\kappa'}\nonumber\\
    &\ll_{\Gamma,\supp\varphi,x} 
   \sqrt{T}+c\left(\eta+T^{-1/2}\right)^\alpha+\nu(\varphi\circ\pi_U)\inv\eta^{-2\ell+(n^2+3n-2)/4}S_\ell(\varphi)T^{-\kappa'}\nonumber\\
    %&\ll_{\Gamma,\supp\varphi,x} \left(\frac{R}{r}\right)^\sigma \nu(\varphi\circ\pi_U)\inv \left[\left(\eta+\frac{\sqrt{c}+\eta R}{\sqrt{T}}\right)^\alpha \nu(\varphi\circ\pi_U) +\eta^{-2\ell+(n^2+3n-2)/4}S_\ell(\varphi)T^{-\kappa'}\right]\\
    &\ll_{\Gamma,\supp\varphi,x} \sqrt{T}+c\left(\eta+T^{-1/2}\right)^\alpha+ \frac{\eta^{-2\ell+(n^2+3n-2)/4}S_\ell(\varphi)T^{-\kappa'}}{\nu(\varphi\circ\pi_U)}\nonumber\\
    &\ll_{\Gamma,\supp\varphi,x} T^{-\kappa}\left(1 + S_\ell(\varphi)\nu(\varphi\circ\pi_U)\inv\right),\label{eq: after choosing eta in main thm proof}
\end{align} where \eqref{eq: after choosing eta in main thm proof} follows by choosing $\eta=T^{-\rho}$, where $\rho=1$ if $2\ell-\frac{n^2+3n-2}{4}<0$, and \[
\rho=\frac{\kappa'}{4\ell-n+1-\frac{1}{2}n(n+1)}\] otherwise, and letting \[
\kappa=\min\{\rho\alpha,\alpha/2,\kappa'/2\}.\]

\end{proof}
%\tcr{$\int_{P}\ps_{\Psi(x)\Gamma}\left(B_U\left(\frac{\sqrt{T}}{x\star\pi_U(p)}\right)\right){\varphi(\pi_U(p))}d\nu(p)$ to get a limit. What about\begin{align}
 %   &\frac{\int_{P}\ps_{\Psi(x)\Gamma}\left(B_U\left(\frac{\sqrt{T}}{x\star\pi_U(p)}\right)\right){\varphi(\pi_U(p))}d\nu(p)}{\int_{P\cap\pi_U\inv(\supp\varphi)} \ps_{\Psi(x)\Gamma}\left(B_U\left(\frac{\sqrt{T}}{x\star \pi_U(p)}\right)\right)d\nu(p)}?
%\end{align}}

\begin{remark} Note that the implied dependence on $x$ is quite explicit. It arises from suppressing the factors $R_\varphi, r_\varphi, \|\Psi(x)\inv\|,$ and $c$ throughout the argument. Specifically, $c$ is suppressed in the use of Lemma \ref{lem: small varphi bring into integral}, and $r_{\varphi},R_\varphi$ are suppressed in \eqref{eq: after choosing eta in main thm proof}. Note that these constants depend on $x$ and $\supp\varphi$ through the $\star$ operation, as can be seen from the definitions and the proof of Lemma \ref{lem: bound on ug}, and they can also be computed explicitly if desired. The factor of $\|\Psi(x)\inv\|$ is suppressed in the construction of the partition in Corollary \ref{cor: partition of the function}. %All of these may be computed explicitly (and bounded in ways that depend only on $\supp\varphi$ and $x$, not on $i$) from the definitions and the proof of Lemma \ref{lem: bound on ug}.
The implied constant from Theorem \ref{thm;br equidistr} also depends on $x$ through the explicit Diophantine behaviour of $x$, i.e. the $(\eps,s_0)$.\label{remark: dep on x}\end{remark}

\begin{remark}The suppressed constants $R_\varphi,r_\varphi, c,$ and $\|\Psi(x)\inv\|$ mentioned in Remark \ref{remark: dep on x} are continuous functions of $x$ by definition of $\star$. This will be used in the next section.\label{rem: continuous dep on x}\end{remark}

\section{Applications}\label{section: applications}

Let $V$ be a manifold on which $G$ acts smoothly and transitively from the right, so that $V$ may be identified with $H \backslash G$ for some closed subgroup $H$ of $G$ that is the stabilizer of a point $v_0\in V$. Let $\sigma : H \backslash G \to V$ be the identification \begin{equation} \label{eq: defn of sigma} \sigma(Hg) = v_0 \cdot g.\end{equation} Note that $\sigma$ is smooth because $G$ acts smoothly. 

Assume further that $U \subseteq H \subseteq UM$. %In particular, $H$ stabilizes $E_{1,n+1}$, which will be used later on to define $\star$ on $V$. 
In particular, $\pi_U(H)$ is compact in $U\backslash G$ (recall from \S\ref{section: notation} that $\pi_U : G \to U \backslash G$ is the quotient map). Define $\theta : U \backslash G \to H \backslash G$ by \begin{equation}\label{eq: defn of theta} \theta(Ug)=Hg.\end{equation}We will now show that $\theta$ is smooth. Since $U$ is closed, $\pi_U : G \to U\backslash G$ is a smooth submersion. Thus, $\theta$ is smooth if and only if $\theta \circ \pi_U$ is smooth. Since $\theta \circ \pi_U=\pi_H$, the quotient map from $G \to H \backslash G$, it is smooth, which establishes the smoothness of $\theta$. 

For $v, u \in V$, let $x,y \in U\backslash G$ be such that $u=\sigma(\theta(x)), v = \sigma(\theta(y))$. We may define \[v\star u = x\star y.\] This is well-defined because $UM$ stabilizes $E_{1,n+1}$, and $H \subseteq UM$ (see \eqref{eq: star} for the definition of $\star$ on $U\backslash G$).

Recall the definition of $\Psi:U\backslash G \to G$ from \S\ref{section: G mod U}: \[\Psi(Ug)=ak,\] where $g=uak$ is the Iwasawa decomposition of $g$.

\begin{definition}
A vector $v\in V$ is called \textbf{$\eps$-Diophantine} if there exists $x\in U\backslash G$ such that $v=v_0 \cdot x$ and $\Psi(x)\Gamma$ is $\eps$-Diophantine. Such $x$ is called an \textbf{$\eps$-Diophantine representative} of $v$.   
\end{definition}

\begin{remark}\label{rem: Diophantine doesnt depends on representative}
Note that for any $g \in G$, $g^-\in \Lambda_r(\Gamma)$ if and only if $(umg)^-\in\Lambda_r(\Gamma)$ for all $um \in UM$, since $UM$ does not change $g^-$. Thus, for $v \in V,$ we may define the notation $$v^- \in \Lambda_r(\Gamma)$$ if for any representative $\Psi(x),$ $\Psi(x)^-\in\Lambda_r(\Gamma).$ 
Note also that since $\mathcal{C}_0$ is $M$ invariant and $A$ commutes with $M$, the definition of $v$ being $\eps$-Diophantine is independent of the choice of a representative $x\in U\backslash G$. 
\end{remark}

%\tcr{Need to understand if there are actually many representatives for different $\eps,s_0$. Also want to understand what this condition means in specific $V$ settings.}

Observe that $\nu$ uniquely defines a measure on $U\backslash G$ by $\nu(\varphi\circ\pi_{U})$ for any continuous function $\varphi$ defined on $U\backslash G$. One can use the push-forward of this measure to $H\backslash G$ and the identification of $V$ with $H\backslash G$ to uniquely define a measure on $V$. Denote this measure by $\bar\nu$. 

\begin{corollary}\label{cor: main}
    For any $0<\eps<1$, there exist $\ell=\ell(\Gamma)\in\N$ and $\kappa=\kappa(\Gamma,\eps)$ satisfying: for every $\ov\varphi\in C^\infty_c(V)$ and $\eps$-Diophantine $v\in V$ with Diophantine representative $x\in U\backslash G$ (i.e., $v_0x=v$), and  $T\gg_{\Gamma,\supp{\ov\varphi},v}1$, 
    \begin{align*}
        &\left|\frac{\sum_{\gamma\in\Gamma_T}\ov\varphi(v\gamma)}{\int_{P}\ps_{\Psi(x)\Gamma}\left(B_U\left(\frac{\sqrt{T}}{v\star u}\right)\right){\ov\varphi(u)}d\ov\nu(u)}-1\right|\ll_{\Gamma,\supp\ov\varphi,x} T^{-\kappa}\left(1 + S_\ell(\ov\varphi)\nu(\ov\varphi)\inv\right).
    \end{align*}
\end{corollary}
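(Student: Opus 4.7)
The plan is to reduce to Theorem \ref{thm: main} by pulling back $\ov\varphi$ from $V$ to $U\backslash G$ via the smooth map $\sigma\circ\theta:U\backslash G\to V$.

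First I would set $\varphi:=\ov\varphi\circ\sigma\circ\theta$ and check that $\varphi\in C_c^\infty(U\backslash G)$. Smoothness is immediate since $\sigma$ and $\theta$ are smooth. For compact support, note that the fibers of $\theta$ are copies of $U\backslash H$, and since $U\subseteq H\subseteq UM$ with $U\cap M=\{e\}$, $U\backslash H$ sits inside $U\backslash UM\cong M$ as a closed subset; $M$ is compact, so $\theta$ is proper and $\supp\varphi\subseteq(\sigma\circ\theta)^{-1}(\supp\ov\varphi)$ is compact.

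Next I would translate each side of the desired inequality into the corresponding quantity on $U\backslash G$. Because $v=\sigma(\theta(x))$ and the right $\Gamma$-action intertwines with $\sigma$ and $\theta$, we have $\ov\varphi(v\gamma)=\varphi(x\gamma)$, so $\sum_{\gamma\in\Gamma_T}\ov\varphi(v\gamma)=\sum_{\gamma\in\Gamma_T}\varphi(x\gamma)$. For the denominator, $\ov\nu$ is by construction the pushforward of $\nu$ (viewed on $U\backslash G$ via $\pi_U$) under $\sigma\circ\theta$; combined with $v\star\sigma(\theta(y))=x\star y$ for $y\in U\backslash G$, this yields
\[\int_V\ps_{\Psi(x)\Gamma}\!\left(B_U\!\left(\tfrac{\sqrt T}{v\star u}\right)\right)\ov\varphi(u)\,d\ov\nu(u)=I(\varphi,T,x),\]
and likewise $\ov\nu(\ov\varphi)=\nu(\varphi\circ\pi_U)$.

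Since $x$ is an $\eps$-Diophantine representative of $v$, $\Psi(x)\Gamma$ is $\eps$-Diophantine by definition, so Theorem \ref{thm: main} applies directly to $\varphi$ and $x$. To upgrade the resulting Sobolev bound from $S_{\ell_0}(\varphi)$ to $S_\ell(\ov\varphi)$, I would invoke Lemma \ref{lem: sobolev chain rule} with $\psi=\sigma\circ\theta$: for the order $\ell_0=\ell_0(\Gamma)$ supplied by Theorem \ref{thm: main}, there is $\ell>\ell_0$ with $S_{\ell_0}(\varphi)\ll S_\ell(\ov\varphi)$, the implied constant depending only on the fixed smooth map $\sigma\circ\theta$. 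Dependence on $\supp\varphi$ in the implied constants likewise becomes dependence on $\supp\ov\varphi$, since $\supp\varphi$ is determined by $\supp\ov\varphi$. This delivers the claimed estimate. The only subtleties are these bookkeeping checks---properness of $\theta$ and the compatibility of the $\star$-product and of $\ov\nu$ with their preimages on $U\backslash G$; all of the analytic content is already encapsulated in Theorem \ref{thm: main}.
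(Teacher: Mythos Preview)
Your proposal is correct and follows the same route as the paper: pull back $\ov\varphi$ to $\varphi=\ov\varphi\circ\sigma\circ\theta$, identify both numerator and denominator with their $U\backslash G$ counterparts, apply Theorem \ref{thm: main}, and then compare Sobolev norms. The only difference worth noting is in the Sobolev bound: the paper factors $U\backslash G\cong (U\backslash H)\times(H\backslash G)$ and writes $\varphi(y,z)=\mathrm{id}_{U\backslash H}(y)\cdot(\ov\varphi\circ\sigma)(z)$, applying Lemma \ref{lem:KleinbockMargulis}(1) and then Lemma \ref{lem: sobolev chain rule} to the diffeomorphism $\sigma$, whereas you apply Lemma \ref{lem: sobolev chain rule} directly to the composite $\sigma\circ\theta$; your route is slightly more direct and equally valid given that you have established properness of $\theta$ (so the composed function is in $C_c^\infty$). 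One small point you glossed over: the corollary states the threshold on $T$ depends on $v$ rather than on the chosen representative $x$; the paper observes (via Remark \ref{rem: Diophantine doesnt depends on representative}) that the $(\eps,s_0)$-Diophantine parameters are representative-independent, so the threshold from Theorem \ref{thm: main} genuinely depends only on $v$.
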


\begin{proof} 
Let $\ell'$ satisfy the conclusion of Theorem \ref{thm: main} and $\ell$ satisfy the conclusion of Lemma \ref{lem: sobolev chain rule} for $\ell'$. 

Recall the definitions of $\sigma:H \backslash G \to V$ in \eqref{eq: defn of sigma} and $\theta: U \backslash G \to H \backslash G$ in \eqref{eq: defn of theta}. %Define $\varphi' \in C_c^\infty(H \backslash G)$ by \[\varphi'= \ov\varphi \circ \sigma.\]
Define $\varphi \in C_c^\infty(U \backslash G)$ by \[\varphi = \ov\varphi\circ\sigma\circ\theta.\] %Then, for $H=UM$, $\varphi'$ is ``$M$-invariant'' in this sense. We will identify $UM\backslash G$ with $\R^n \setminus\{0\}$ and interpret Theorem \ref{thm: main} in this setting.

%Theorem \ref{thm: main} for $\varphi'$ says that for any $h \in U\backslash G$ such that $\Psi(Uh)$ is $(\eps,s_0)$-Diophantine and $T$ sufficiently large, we have \[ \left|\frac{1}{\ps_{\Psi(Uh)\Gamma}\left(B_U(\sqrt{2T})\right)}\sum_{\gamma\in\Gamma_T}\varphi'(h\gamma)-\nu(\varphi'\circ\tau)\right|\ll_{\Gamma,\supp\varphi,d(e,\Psi(Uh)\Gamma)}S_\ell(\varphi')T^{-\kappa}.\]

%Let $x'\in H\backslash G$. Assume there exists $x\in U\backslash G$ such that $\theta(x)=x'$ and $\Psi(x)$ %(or $\Psi(\Theta(h))$) 
%is $(\eps,s_0)$-Diophantine. 
Let $x \in U\backslash G$ be an $\eps$-Diophantine representative of $v$. In particular, note that $\sigma(\theta(x))=\sigma(H \Psi(x)) = v.$ Then, since $$\varphi(x\gamma) = \ov\varphi(\sigma(\theta(x))\cdot\gamma) = \ov\varphi(v\cdot\gamma),$$ by Theorem \ref{thm: main}, 
for $T\gg_{\Gamma,\supp\ov\varphi,\eps,x} 1$, \begin{align*}
        & T^{-\kappa}\left(1 + S_\ell(\varphi)\nu(\varphi\circ\pi_U)\inv\right)\\
        &\gg_{\Gamma,\supp\varphi,x} T^{-\kappa}\left|\frac{\sum_{\gamma\in\Gamma_T}\varphi(x\gamma)}{\int_{P}\ps_{\Psi(x)\Gamma}\left(B_U\left(\frac{\sqrt{T}}{x\star \pi_U(p)}\right)\right){\varphi(\pi_U(p))}d\nu(p)}-1\right|\\
        &\gg_{\Gamma,\supp\ov\varphi,x} T^{-\kappa}\left|\frac{\sum_{\gamma\in\Gamma_T}\ov\varphi(v\gamma)}{\int_{P}\ps_{\Psi(x)\Gamma}\left(B_U\left(\frac{\sqrt{T}}{v\star u}\right)\right){\ov\varphi(u)}d\ov\nu(u)}-1\right|.
    \end{align*}

Note that the dependence of $T$ on $x$ is through $\eps,s_0$ such that $x$ is $(\eps,s_0)$-Diophantine, and by Remark \ref{rem: Diophantine doesnt depends on representative}, this is in fact independent of the choice of Diophantine representative $x$ of $v$. By Remark \ref{rem: continuous dep on x}, the dependence on $x$ in the implied constant in the above inequality can be made uniform over all representatives of $v$, as they vary by elements in $M$, a compact set. Thus, both dependencies on $x$ can be replaced by dependence on $v$.

Observe that $\varphi$ can be viewed as a function on $U\backslash H \times H\backslash G\cong U\backslash G$ by \[
\varphi(y,x)=\operatorname{id}_{U\backslash H}(y)\cdot(\ov\varphi\circ\sigma)(x). \]
Therefore, Lemma \ref{lem:KleinbockMargulis} and Lemma \ref{lem: sobolev chain rule} imply \[
S_{\ell'}(\varphi)\ll_{H}S_{\ell'}(\operatorname{id}_{U\backslash H})S_{\ell'}(\ov\varphi\circ\sigma)\ll_{H,\sigma,\supp\varphi}S_\ell(\ov\varphi),\]
where the Sobolev norm of $\operatorname{id}_{U\backslash H}$ is finite since we are assuming $U\backslash H$ is compact. 
\end{proof}

In a similar way, one may deduce the following from Corollary \ref{cor: asymptotic} (see Remark \ref{rem: Diophantine doesnt depends on representative} for the notation $v^-\in\Lambda_r(\Gamma)$):

\begin{corollary}\label{cor: asymptotic in general application}
    Assume that $\Gamma$ is convex cocompact. For any $\ov\varphi\in C_c(V)$ and every $v\in V$ with $v^-\in\Lambda_r(\Gamma)$, as $T \to \infty$, \[\frac{1}{T^{\delta_\Gamma/2}} \sum\limits_{\gamma \in \Gamma_T} \ov\varphi(v\gamma) \asymp \int_{P}\frac{\ov\varphi(u)}{(v\star u)^{\delta_\Gamma}}d\ov\nu(u),\]
    where the implied constant depends on $v$ and $\Gamma$. 
\end{corollary}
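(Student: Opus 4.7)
The plan is to mirror the derivation of Corollary \ref{cor: main} from Theorem \ref{thm: main}, but starting instead from Corollary \ref{cor: asymptotic}. Given $\ov\varphi\in C_c(V)$, I would define $\varphi\in C_c(U\backslash G)$ by
\[
\varphi:=\ov\varphi\circ\sigma\circ\theta,
\]
where $\sigma:H\backslash G\to V$ and $\theta:U\backslash G\to H\backslash G$ are the maps from \eqref{eq: defn of sigma} and \eqref{eq: defn of theta}. Both are continuous (indeed smooth), and $\varphi$ has compact support in $U\backslash G$ because $U\backslash H$ is compact (so that $\theta^{-1}$ of a compact set in $H\backslash G$ is compact). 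Next, I would pick a representative $x\in U\backslash G$ with $\sigma(\theta(x))=v$, i.e.\ $x=\pi_U(g)$ for some $g\in G$ with $v_0\cdot g=v$.

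The hypotheses of Corollary \ref{cor: asymptotic} must then be checked for this $x$. Since $\Gamma$ is convex cocompact, $\Lambda_r(\Gamma)=\Lambda(\Gamma)$, so the condition $v^-\in\Lambda_r(\Gamma)$ from the statement matches the condition $\Psi(x)^-\in\Lambda(\Gamma)$ of Corollary \ref{cor: asymptotic}. Independence of this condition from the choice of lift $x$ follows from Remark \ref{rem: Diophantine doesnt depends on representative}, since any two lifts differ by an element of $UM$ which preserves the backward endpoint. Applying Corollary \ref{cor: asymptotic} therefore yields
\[
\frac{1}{T^{\delta_\Gamma/2}}\sum_{\gamma\in\Gamma_T}\varphi(x\gamma)\asymp\int_P\frac{\varphi(\pi_U(p))}{(x\star\pi_U(p))^{\delta_\Gamma}}\,d\nu(p),
\]
with implied constant depending on $x$ and $\Gamma$.

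It remains to translate both sides back to $V$. On the left, $\varphi(x\gamma)=\ov\varphi(\sigma(\theta(x))\cdot\gamma)=\ov\varphi(v\gamma)$, so the sum matches the one in the conclusion. On the right, the definition of $v\star u$ from \S\ref{section: applications} gives $x\star\pi_U(p)=v\star(\sigma\circ\theta\circ\pi_U)(p)$, and the definition of $\ov\nu$ as the pushforward of $\nu$ under $\sigma\circ\theta\circ\pi_U$ yields
\[
\int_P\frac{\varphi(\pi_U(p))}{(x\star\pi_U(p))^{\delta_\Gamma}}\,d\nu(p)=\int_V\frac{\ov\varphi(u)}{(v\star u)^{\delta_\Gamma}}\,d\ov\nu(u).
\]
Finally, the dependence of the implied constant on $x$ can be made uniform over the choice of lift of $v$ by Remark \ref{rem: continuous dep on x}, since different lifts differ by the compact group $M$ acting on $x$; this converts the $x$-dependence into dependence on $v$.

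The argument is essentially a change of variables under the chain $U\backslash G\to H\backslash G\to V$, so there is no real obstacle. The two points meriting care are (i) that $v^-\in\Lambda_r(\Gamma)$ is an invariant of $v$ rather than of its lift, which follows from $H\subseteq UM$ and Remark \ref{rem: Diophantine doesnt depends on representative}, and (ii) that the measure $\ov\nu$ and the $\star$-product on $V$ are compatible with those on $U\backslash G$ in the precise sense needed for the integral identity above — both are true by construction.
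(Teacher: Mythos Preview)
Your proposal is correct and follows precisely the approach the paper indicates: deduce the result from Corollary \ref{cor: asymptotic} by the same change-of-variables argument used to derive Corollary \ref{cor: main} from Theorem \ref{thm: main}. The paper itself does not spell out a proof beyond the phrase ``in a similar way,'' so your write-up is in fact more detailed than what appears there, and the two points you flag for care (well-definedness of $v^-\in\Lambda_r(\Gamma)$ and compatibility of $\ov\nu$ and $\star$ under the pushforward) are exactly the ones that need checking.
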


\subsection{Identification with null vectors}\label{section: null vectors}

Let $G$ act on $\R^{n+1}$ by right matrix multiplication, and let \[V= \e_{n+1} G \setminus \{0\}.\] To better understand the set $V$, note that the representation of $\SO(n,1)$ we are using is \[\SO(n,1) = \{A \in \SL_{n+1}(\R) : AJA^T = J\},\] where \[J=\begin{pmatrix} 0 & 0 & 1 \\0 & -I_{n-1} & 0 \\ 1 & 0 & 0 \end{pmatrix}.\] 

Let $P$ be such that  \[J':=\begin{pmatrix} -I_{n} & 0 \\ 0 & 1  \end{pmatrix}=PJP^T.\] Then $VP$ is the upper half of the ``light cone'' in the standard representation of $\SO(n,1)$. In particular, this consists of null vectors of $$Q'(x_1,\ldots,x_{n+1}) = x_{n+1}^2 - x_1^2 - \cdots -x_{n}^2$$ with $x_{n+1}>0$. In our case, $V$ consists of null vectors of $$Q(x_1,\ldots,x_{n+1}) = 2x_1x_{n+1}-x_2^2-\cdots-x_{n}^2.$$ %with $x_1,x_{n+1}>0.$

%We have $G=KAU$ \[\begin{pmatrix}1 & 0 \\0 & SO(n)\end{pmatrix}\cdot a \cdot e_{n+1}=(c, c\cdot\text{the last row of }SO(n).\]$KK^T=I$\[x_1^2-x_2^2-\dots-x_{n+1}^2=0.\]

%\[\SO(n,1)=\{A\::\:AJA^T=J\}\]
%If $xJx^T=0$ then for $A\in\SO(
%n,1)$, we have $xAJ(xA)^T=0$. \[
%J=\begin{pmatrix} 0 & 0 & 1 \\0 & -I_{n-1} & 0 \\ 1 & 0 & 0 \end{pmatrix}. \]
%There should be $P$ such that \[
%J':=\begin{pmatrix} -I_{n} & 0 \\ 0 & 1  \end{pmatrix}=PJP^T.\]
%This $P$ is \[
%P=\begin{pmatrix} 1 & 0 & \cdots & 0 & 1\\
%0 & 1 & \cdots & 0 & 0\\
%0 & 0 & \ddots & 1 & 0\\
%-1 & 0 & \cdots & 0 & 1
%\end{pmatrix}. \]
%Then $(xP)J'(xP)^T=0$

%Let $G$ act on $\R^{n+1}$ by right matrix multiplication. Define \[V = \e_{n+1} G \setminus \{0\}.\] Note that $V$ is contained in the null set of the quadratic form \[Q(x_1,\dots,x_{n+1}) = 2x_1x_{n+1} - x_2^2 - \cdots - x_{n+1}^2.\]

%Note that we are using a non standard representation of $\SO(n,1)$, and the set $V$ in the usual representation would be the upper half of the light cone.

%Let $G$ act on $\R^{n+1}$ by right matrix multiplication. Observe that $G$ acts transitively on $V$, the null set of the quadratic equation \[Q(x_1,\dots,x_{n+1})=2x_1 x_{n+1}-x_2^2\dots-x_{n}^2\] in $\R^{n+1}\setminus \{0\}$. Using the previous section, we will obtain Proposition \ref{prop: vector asymptotic intro} in this setting, which is restated below:

\begin{proposition}
    Let $\Gamma$ be convex cocompact. For any $\ov\varphi \in C_c(V)$ and every $v \in V$ with $v^- \in \Lambda_r(\Gamma),$ as $T\to \infty$, we have that \[\frac{1}{T^{\delta_\Gamma/2}} \sum\limits_{\gamma \in \Gamma_T} \ov\varphi(v\gamma) \asymp \int_{V}\ov\varphi(u)\frac{d\ov\nu(u)}{(\norm{v}_2\norm{u}_2)^{\delta_\Gamma/2}},\] where the implied constant depends on $v$ and $\Gamma$.
\end{proposition}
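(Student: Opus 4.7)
The strategy is to apply Corollary \ref{cor: asymptotic in general application} to $V = \e_{n+1} G \setminus \{0\}$, and then to identify the $\star$-product with the product of Euclidean norms, up to a multiplicative constant.

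First, we verify that $V$ fits into the framework of \S\ref{section: applications}. Let $v_0 = \e_{n+1}$. From the parametrization of $U$, the last row of $u_\t$ is $(0,\ldots,0,1)$, so $\e_{n+1} u_\t = \e_{n+1}$; similarly, the last row of every element of $M$ is $\e_{n+1}$. Hence $UM \subseteq H := \Stab_G(\e_{n+1})$. Conversely, any $h \in H$ has last row $\e_{n+1}$, and one checks from the defining relation $hJh^T = J$ that this forces $h \in UM$. Thus $H = UM$, so in particular $U \subseteq H \subseteq UM$, and $\pi_U(H)$ is compact. So Corollary \ref{cor: asymptotic in general application} applies, giving
\[\frac{1}{T^{\delta_\Gamma/2}} \sum_{\gamma \in \Gamma_T} \ov\varphi(v\gamma) \asymp \int_V \frac{\ov\varphi(u)}{(v \star u)^{\delta_\Gamma}} d\ov\nu(u).\]

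The remaining task is the explicit computation of $v \star u$. Let $x, y \in U\backslash G$ with $v = \e_{n+1} \Psi(x)$ and $u = \e_{n+1} \Psi(y)$. A direct computation shows that $E_{1,n+1}\Psi(y)$ is the matrix whose first row equals $\e_{n+1}\Psi(y) = u$ and whose remaining rows are zero. Hence $\Psi(x)^{-1} E_{1,n+1} \Psi(y)$ is the rank-one matrix whose $(i,j)$-entry equals $(\Psi(x)^{-1})_{i1}\, u_j$. Its max norm is therefore $\|c_1(\Psi(x)^{-1})\|_\infty \cdot \|u\|_\infty$, where $c_1(\cdot)$ denotes the first column.

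To identify the first column of $\Psi(x)^{-1}$, recall that for $g \in \SO(n,1)$ we have $g^{-1} = J g^T J$ where $J$ is the quadratic form matrix. A direct calculation using $J_{\ell 1} = \delta_{\ell, n+1}$ gives
\[(\Psi(x)^{-1})_{i1} = \sum_k J_{ik}\, v_k, \quad \text{where } v = \e_{n+1}\Psi(x),\]
so the entries of $c_1(\Psi(x)^{-1})$ are $(v_{n+1}, -v_2, \ldots, -v_n, v_1)$, a permutation (up to sign) of the coordinates of $v$. Thus $\|c_1(\Psi(x)^{-1})\|_\infty = \|v\|_\infty$, which yields
\[v \star u = \sqrt{\tfrac{1}{2}\, \|v\|_\infty \|u\|_\infty}.\]
Since all norms on $\R^{n+1}$ are equivalent, $\|w\|_\infty \asymp \|w\|_2$ uniformly in $w$, and so
\[(v\star u)^{\delta_\Gamma} \asymp (\|v\|_2 \|u\|_2)^{\delta_\Gamma/2}.\]
Substituting this into the conclusion of Corollary \ref{cor: asymptotic in general application} gives the claim; the implied constant absorbs the norm-equivalence factors and the factor of $1/2$, and depends only on $v$ and $\Gamma$ (the $v$-dependence comes from Corollary \ref{cor: asymptotic in general application} itself). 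The only mild subtlety is checking that the hypothesis $v^- \in \Lambda(\Gamma)$ of the statement corresponds to the hypothesis $v^- \in \Lambda_r(\Gamma)$ of Corollary \ref{cor: asymptotic in general application}, but this is immediate because $\Gamma$ is convex cocompact, so $\Lambda(\Gamma) = \Lambda_r(\Gamma)$.
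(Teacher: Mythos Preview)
Your proof is correct and follows the same approach as the paper: verify that $\Stab_G(\e_{n+1})=UM$ so that Corollary~\ref{cor: asymptotic in general application} applies, then compute $v\star u$ explicitly and show it is $\asymp(\|v\|_2\|u\|_2)^{1/2}$. Your computation of $v\star u$ via the identity $g^{-1}=Jg^TJ$ is in fact slightly more direct than the paper's, which first passes through the polar decomposition $v=\|v\|_2\,\e_{n+1}k_v$ before arriving at the same formula; both yield $v\star u=\sqrt{\tfrac12\|v\|_\infty\|u\|_\infty}$ exactly. One cosmetic slip: the hypothesis in the stated proposition is already $v^-\in\Lambda_r(\Gamma)$, so your final sentence reconciling $\Lambda(\Gamma)$ with $\Lambda_r(\Gamma)$ is unnecessary here (though it is needed for the version stated in the introduction).
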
 The measure $\ov\nu$ is described more explicitly in \eqref{eq: nice ov nu defn}, below.

\bigskip

Let \[\e_{n+1} = (0,\ldots,0,1)\in \R^{n+1}.\] Then \begin{equation}
    \Stab_G(\e_{n+1}) = UM,
\end{equation} and hence \begin{equation} A \times M \backslash K\cong UM \backslash G \cong V \end{equation} via right matrix multiplication\[UMg \mapsto \e_{n+1}g.\]

We will now interpret Corollary \ref{cor: main} in this setting. 
We start by understanding the measure $\ov\nu$. 

We view $V$ as $(M\backslash K)\times \R^+,$ via the ``polar decomposition'' of $v \in V$,  \begin{equation}\label{eq: v identified with ak}
    v = \|v\|_2 \e_{n+1} k=\e_{n+1}a_{-\log\|v\|_2}k,
\end{equation} 
where $\R^+ = \{r \in \R : r > 0\}$ and $\|\cdot\|_2$ denotes the Euclidean norm on $V$.  
We may also identify $M\backslash K$ with $\partial(\H^{n})$ via% $\mathbb{S}^n$ via
\begin{equation}\label{eq: identifying K mod M with Sn}   
    Mk \mapsto w_o^- k.
    %Mk \mapsto e_{n+1} k,
\end{equation} 

%where we use the unit disc model for $\H^{n+1}$. %, so $w_o^- \in \mathbb{S}^n \cong \partial(\H^{n+1})$. 
%$A$ is identified with $\R^+$ via \begin{equation}\label{eq: identifying A with Rplus}   a_t \mapsto e^{-t}.\end{equation}

%Let $\ov\varphi \in C_c^\infty(\R^{n+1}\setminus\{0\})$. %We wish to define $\varphi\in C_c^\infty(UM \backslash G)$ and a measure $\ov\nu$ on $\R^{n+1}\setminus\{0\}$ such that \[\nu(\varphi\circ\theta\circ\tau) = \ov\nu(\ov\varphi).\]

%We can define $\varphi\in C_c^\infty(UM\backslash G)$ by \[\varphi(h) = \ov\varphi(\e_{n+1} h)\]
%for any $h \in UM \backslash G.$

Thus, given $v \in V$, \eqref{eq: v identified with ak} and \eqref{eq: identifying K mod M with Sn} uniquely determine a pair $(a_{-\log\|v\|_2},Mk)\in A\times M\backslash K$, or equivalently, a pair $(a_{-\log\|v\|_2}, w_o^-k)\in A\times\partial(\H^n)$. 
%Then, we can assume that the representative $x\in U\backslash G$, $\e_{n+1}x=v$, satisfies $\Psi(x)=a_{\log\|v\|}k$. 

Viewing $\partial(\H^n)$ as $M\backslash K$ as in \eqref{eq: identifying K mod M with Sn}, we may in turn identify this with $\mathbb{S}^n \subseteq \R^{n+1}$ via \[ 
    w_o^-k \mapsto \e_{n+1}k.
\] Thus, $\nu_o$ uniquely determines a measure $\ov{\nu}_o$ on $\mathbb{S}^n \cap V$ via \begin{equation}
    d\ov{\nu}_o(\e_{n+1}k) = d\nu_o(w_o^-k). \label{eq: ov nu o defn}
\end{equation}

Then, since $K$ stabilizes $o$ and $M$ stabilizes $w_o$, $\ov\nu$ can be described from \eqref{eq; defn of nu}: if $s = \beta_{(a_{-\log\|v\|_2}k)^-}(o, a_{-\log\|v\|_2}k(o))=\log\norm{v}_2$, 
\begin{align*}
	d\ov\nu(v)&:=d\nu(a_{-\log\|v\|_2}k)\\
	&= e^{\delta_\Gamma \beta_{(a_{-\log\|v\|_2}k)^-}(o, a_{-\log\|v\|_2}k(o))} d\nu_o(w_o^-a_{-\log\|v\|_2}k)ds\\ &=e^{\delta_\Gamma s} d\nu_o(w_o^-k)ds\\
	&=\norm{v}_2^{\delta_\Gamma-1} d\ov{\nu}_o(\e_{n+1}k)d\|v\|_2.
\end{align*} For $v \in V$, define \[v^-:= \e_{n+1}k \in \mathbb{S}^n,\] where $v$ corresponds to $(a_{-\log\|v\|_2},Mk) \in A \times M\backslash K.$ Then we have \begin{equation}
    d\ov{\nu}(v)= \|v\|_2^{\delta_\Gamma-1} d\ov{\nu}_o(v^-)d\|v\|_2. \label{eq: nice ov nu defn}
\end{equation}

As discussed in the previous section, $v\star u$ may be computed by the formula in \eqref{eq: star} for any choice of representatives of $v$ and $u$ in $U\backslash G$. In particular, if \[v=\|v\|_2\e_{n+1}k_v,\quad u=\|u\|_2\e_{n+1}k_u,\] then \[v\star u = \sqrt{\frac{1}{2}\|v\|_2\|u\|_2 \max\limits_{1\le i,j\le n+1} \left|(k_v\inv)_{i,1}(k_u)_{n+1,j}\right|},\]
where $k_{i,j}$ denotes the $(i,j)$ entry of $k$.
In particular\[
v\star u\asymp \sqrt{\norm{v}_2\norm{u}_2}.\]

Putting this together with Corollary \ref{cor: asymptotic in general application} yields the proposition.

\subsection{Wedge products}

The previous example can be generalized to $\bigwedge^j \R^{n+1}$ for any $1\le j\le n$. Fix $j$, and let \[
W=\bigwedge^j \R^{n+1},\quad\text{and}\quad v_0=v_0(j)=e_{n-j+1}\wedge\cdots\wedge e_{n+1}, \]
with $G$ acting on $W$ by right multiplication. Then, 
\[\operatorname{Stab}_{e_{n-j+1}\wedge\cdots\wedge e_{n+1}}=U\cdot M_j\]
for some $M_j\subseteq M$. Define \[V = v_0 G \setminus \{0\}.\]
Fix a norm on $V$ which is invariant under $K$ such that $\norm{v_0}=1$.   

Since any $v\in V$ can be written as \[
v=v_0a_{-\log\norm{v}}k,\]
where $k\in M_j\backslash K$, in a similar way to the construction in the previous section, one can show that if $a_{-\log\|v\|}k \in UP$ and can be written as $uamv \in UAM\tilde U$, then \[
d\ov\nu(v)=\norm{v}^{\delta_\Gamma-1}d\nu_o(v^-)d\norm{v}dm,\]
where $v^-:=w_o^- k$, and $dm$ is the push forward of the probability Haar measure on $M_j\backslash M$. $d\ov\nu(v)$ is zero if $a_{-\log\|v\|}k\not\in UP$, because the original measure $\nu$ is supported on $P$.

Moreover, by reasoning in the beginning of \S\ref{section: applications}, $v\star u$ is well defined and, as in the previous section, we have that \[
v\star u\asymp \sqrt{\norm{v}\norm{u}}.\]

\end{document}